\DeclarePairedDelimiter\ceil{\lceil}{\rceil}
\def\Dom{\operatorname{Dom}}
\def\ker{\operatorname{Ker}}
\def\sup{\operatorname{sup}}
\def\Im{\operatorname{Im}}
\def\diam{\operatorname{diam}}
\def\Lip{\operatorname{Lip}}
\def\D{\operatorname{Din}}
\def\W{\operatorname{Dom\, \mathcal{E}}}
\def\M{\operatorname{M}}
\def\m{\operatorname{m}}
\def\T{\operatorname{T}}
\def\I{\operatorname{I}}
\def\J{\operatorname{J}}
\def\Li{\operatorname{Li}}
\def\K{\operatorname{K}}
\def\R{\operatorname{R}}
\def\A{\operatorname{A}}
\def\Cucb{\operatorname{\textup{C}_{ucb}}}
\def\h{\operatorname{h}}
\def\Xint#1{\mathchoice
   {\XXint\displaystyle\textstyle{#1}}%
   {\XXint\textstyle\scriptstyle{#1}}%
   {\XXint\scriptstyle\scriptscriptstyle{#1}}%
   {\XXint\scriptscriptstyle\scriptscriptstyle{#1}}%
   \!\int}
\def\XXint#1#2#3{{\setbox0=\hbox{$#1{#2#3}{\int}$}
     \vcenter{\hbox{$#2#3$}}\kern-.5\wd0}}
\def\dashint{\Xint-}
\newcommand{\Hol}{\textup{H\"ol}}
\newcommand{\Ld}{\Delta}
\newcommand{\cB}{\overline{B}}
\newcommand{\E}{\mathcal{E}}
\newcommand*{\dd}{\mathop{}\!\mathrm{d}}
\newcommand{\Q}{\mathcal{Q}}
\def\Ddots{\mathinner{\mkern1mu\raise\p@
\vbox{\kern7\p@\hbox{.}}\mkern2mu
\raise4\p@\hbox{.}\mkern2mu\raise7\p@\hbox{.}\mkern1mu}}
\numberwithin{equation}{section}
\tikzstyle{vertex}=[circle]
\tikzstyle{goto}=[->,shorten >=1pt,>=stealth,semithick]
\newtheorem{thm}{Theorem}[section]
\newtheorem{cor}[thm]{Corollary}
\newtheorem{lemma}[thm]{Lemma}
\newtheorem{prop}[thm]{Proposition}
\newtheorem{mainthm}{Theorem}
\theoremstyle{definition}
\newtheorem{definition}[thm]{Definition}
\theoremstyle{remark}
\newtheorem{remark}[thm]{Remark}
\newtheorem*{Acknowledgements}{Acknowledgements}
\begin{document}

\begin{abstract}
We introduce the logarithmic analogue of the Laplace-Beltrami operator on Ahlfors regular metric-measure spaces. This operator is intrinsically defined with spectral properties analogous to those of elliptic pseudo-differential operators on Riemannian manifolds. Specifically, its heat semigroup consists of compact operators which are trace-class after some critical point in time. Moreover, its domain is a Banach module over the Dini continuous functions and every H\"older continuous function is a smooth vector. Finally, the operator is compatible, in the sense of noncommutative geometry, with the action of a large class of non-isometric homeomorphisms. 
\end{abstract}

\title[The logarithmic Dirichlet Laplacian on Ahlfors regular spaces]{The logarithmic Dirichlet Laplacian on\\ Ahlfors regular spaces}
\author[D.M. Gerontogiannis]{Dimitris Michail Gerontogiannis}
\address{\scriptsize Dimitris Michail Gerontogiannis, Bram Mesland, Leiden University, Niels Bohrweg 1, 2333 CA Leiden, The Netherlands}
\email{d.m.gerontogiannis@math.leidenuniv.nl}

\author[B. Mesland]{Bram Mesland}
\email{b.mesland@math.leidenuniv.nl}
\keywords{Ahlfors regular, logarithmic Laplacian, Dini functions, Kleinian groups.}
\subjclass[2020]{31C25, 30L99, 46L87 (Primary); 37A55 (Secondary)}

\maketitle
\tableofcontents
\section*{Introduction}

The Laplace-Beltrami operator is a fundamental tool in the study of compact Riemannian manifolds. Over the years, the study of analogous operators on general metric measure spaces has generated a vast literature. Amongst the most notable works are those of Cheeger \cite{Ch} for certain length spaces with doubling measures, of Kigami \cite{Ki} for p.c.f self-similar sets and of Sturm \cite{St} for spaces with the measure contraction property.

Here we pursue a different path and study a non-local, logarithmic analogue of the Laplace-Beltrami operator that exhibits remarkable properties, in particular in its  compatibility with dynamics. We focus on metric-measure spaces $(X,d,\mu)$ with $\mu$ being a finite Ahlfors $\delta$-regular measure. Examples are compact Riemannian manifolds, several fractals \cite{Falc}, self-similar Smale spaces \cite{Ge} and limit sets of hyperbolic isometry groups \cite{Coorn}. For compact Riemannian manifolds, our operator is a bounded perturbation of the logarithm of the Laplace-Beltrami operator. In general, it is the generator $\Delta$ of the Dirichlet form 
\begin{equation}\label{eq: intro0}
\E(f,g):=\frac{1}{2}\int_X\int_X \frac{(f(x)-f(y))(g(x)-g(y))}{d(x,y)^{\delta}}\dd\mu(y)\dd\mu(x),
\end{equation}
with $\W = \{f\in L^2(X,\mu):\E (f,f) <\infty \}$ in the real Hilbert space $L^2(X,\mu)$. We call $\Delta$ the \textit{logarithmic Dirichlet Laplacian}, due to the logarithmic singularity of $\E$ (see Lemma \ref{lem:Ahlfors_estimates}) which forces  the logarithmic growth of its spectrum. 

At first glance, $\Delta$ resembles the fractional Dirichlet Laplacian $\Delta_{\alpha}$, for $0<\alpha <1$. The latter is the generator of the Dirichlet form given by (\ref{eq: intro0}), but with singular kernel $d(x,y)^{-(\delta +\alpha)}$ instead of $d(x,y)^{-\delta}$. This form has been studied in terms of its H\"older continuous heat kernel and plays an important role in the study of fractional Sobolev spaces, see Chen-Kumagai-Wang \cite{CKW}, Grigor'yan-Hu-Lau \cite{GHL} and Nahmod's Calder\'on-Zygmund theoretic approach \cite{Nah}.

However, $\Delta_{\alpha}$ differs quite drastically from $\Delta$ due to the logarithmic behaviour of the latter, which does not allow $\Delta$ to have such a regular heat kernel. The trade-off is the compatibility of $\Delta$ with non-isometric group actions on the metric space $(X,d)$. This feature is not enjoyed by $\Delta_{\alpha}$, as we showcase in Subsection \ref{sec:CBA}. It is exactly this property that allows us to incorporate actions of Kleinian groups on their limit sets in Connes' noncommutative geometry programme \cite{Connes}. Although in this paper we refrain from expanding on this programme, we note that our approach to analysing $\Delta$ has been dictated by it.

At this point we mention that, in order to be consistent with the literature on Dirichlet forms, we focus on real Hilbert spaces, unless stated otherwise. Nevertheless, our results extend mutatis mutandis to complex Hilbert spaces by complexifying $\Delta$, see Section \ref{sec:logLap}.

After formally defining $\Delta$ in Section \ref{sec:logLap}, we prove our first theorem about its spectral properties in Section \ref{sec:resolvent}. The logarithmic growth of the eigenvalues is a consequence of the logarithmic singularity in (\ref{eq: intro0}) and the fact that $(X,d)$ is a doubling metric space, as the measure $\mu$ is Ahlfors regular. 

\begin{mainthm}[see Theorem {\ref{theorem:resolvent}}]\label{thm:mainA}
The operator $\Delta$ has compact resolvent and its eigenvalues grow logarithmically fast. In particular, there is some $t_0>0$ so that the heat operators $e^{-t\Delta}$ are trace-class, when $t>t_0$.
\end{mainthm}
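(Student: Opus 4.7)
The plan is to derive all three conclusions from a single spectral estimate: a logarithmic lower bound $\lambda_n\geq c\log n - C$ on the eigenvalues of $\Delta$. Once $\Delta$ is known to have purely discrete spectrum, compactness of the resolvent $(\I+\Delta)^{-1}$ is immediate from $(1+\lambda_n)^{-1}\to 0$, and
\begin{equation*}
\Tr(e^{-t\Delta})=\sum_{n\geq 1}e^{-t\lambda_n}\leq e^{tC}\sum_{n\geq 1}n^{-tc}
\end{equation*}
is finite precisely when $tc>1$, which gives the threshold $t_0=1/c$ for the trace-class property.

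First I would establish discreteness by showing that $\W$, equipped with the graph norm of $\E$, embeds compactly into $L^2(X,\mu)$. Since $\E$ is a regular Dirichlet form with symmetric kernel and $\mu$ is finite, this is a Rellich--Kondrachov-type statement. I would deduce it from a Poincar\'e-type inequality of the shape
\begin{equation*}
\frac{1}{\mu(B(x,r))}\int_{B(x,r)}\bigl|f-\langle f\rangle_{B(x,r)}\bigr|^{2}\dd\mu\lesssim \frac{1}{\log(1/r)}\,\E(f,f),
\end{equation*}
which controls the oscillation of $f$ on balls in terms of the form energy. Combined with a dyadic covering of $X$ produced from Ahlfors regularity and doubling, this yields both discreteness of the spectrum and a quantitative modulus of $L^{2}$-compactness of the embedding $\W\hookrightarrow L^{2}(X,\mu)$.

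The eigenvalue asymptotics then follow from the min-max principle. For the lower bound, the Poincar\'e inequality above implies that any subspace $V\subset\W$ of dimension $n$ on which $\E(f,f)\leq\lambda$ projects injectively onto the piecewise constant functions over a dyadic partition at scale $r\sim e^{-\lambda/c}$; Ahlfors regularity provides at most $\sim r^{-\delta}\sim e^{\delta\lambda/c}$ such cells, hence $n\lesssim e^{\delta\lambda/c}$, i.e.\ $\lambda_n\gtrsim c\delta^{-1}\log n$. A matching logarithmic upper bound on $\lambda_n$ is obtained by constructing $\sim r^{-\delta}$ Lipschitz bump functions supported on disjoint balls of radius $r$ and showing, via Lemma \ref{lem:Ahlfors_estimates}, that each has energy $\lesssim \log(1/r)$; this confirms that the growth rate is genuinely logarithmic and pins down $t_0$ in terms of $\delta$.

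The main obstacle is that the argument takes place exactly at the critical scaling: the inner integral $\int d(x,y)^{-\delta}\dd\mu(y)$ over a ball is only logarithmically divergent, so any slack in the Poincar\'e estimate or in the energy of the bump functions would degrade the constant $c$, and hence the threshold $t_0$. The whole approach therefore hinges on extracting the sharp logarithmic factor uniformly in the centre of the ball, which is the substantive content of Lemma \ref{lem:Ahlfors_estimates}. Once that estimate is in hand, the Weyl-type counting, the min-max comparison, and the summation of the heat trace are essentially mechanical.
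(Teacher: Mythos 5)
Your global architecture --- compactness of the embedding $J:\W\hookrightarrow L^{2}(X,\mu)$, a lower bound $\lambda_{n}\gtrsim\log n$ obtained from coverings by exponentially many balls of radius $e^{-n}$, and then summing $\sum_n e^{-t\lambda_n}$ --- is the same as the paper's (Proposition \ref{prop:compact_resolvent}, Lemma \ref{lem: cover-upper-bound} and Theorem \ref{theorem:resolvent}). But the single estimate you build everything on, the local Poincar\'e inequality
\[
\dashint_{B(x,r)}\bigl|f-\langle f\rangle_{B(x,r)}\bigr|^{2}\dd\mu\lesssim \frac{\E(f,f)}{\log(1/r)},
\]
is false. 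Take $X=[0,1]$ with Lebesgue measure ($\delta=1$) and let $f$ be a Lipschitz bump of height $1$ supported in $B(x_{0},r)$ and equal to $1$ on $B(x_{0},r/2)$. The left-hand side is $\simeq 1$, while $\E(f,f)\simeq r\log(1/r)$, so the right-hand side is $\simeq r$. The failure is structural, not a matter of normalisation: the only part of $\E(f,f)$ that a comparison of $f$ with its average over $B(x,r)$ can see is the local energy $\int_{B}\int_{B}|f(x)-f(y)|^{2}d(x,y)^{-\delta}\dd\mu\dd\mu$, which lives at scales $\leq r$, whereas the factor $\log(1/r)$ in this problem is produced by part (4) of Lemma \ref{lem:Ahlfors_estimates}, i.e.\ by accumulating the kernel mass of $d(x,y)^{-\delta}$ over \emph{all} scales between $r$ and $\diam(X)$. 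The sharp local inequality has no logarithmic gain at all, and without that gain you get only boundedness of $J$, not compactness, and no eigenvalue growth. Since both your compactness claim and your min--max counting (the injectivity of $V$ onto piecewise constants) are fed by this inequality, the gap propagates through the whole argument.

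The fix is the paper's key device: compare $f(x)$ not with its average over $B(x,r)$ but with the weighted average over the \emph{complement},
\[
\T_{r}f(x)=\frac{1}{\|t_{r}(x,\cdot)\|_{L^{1}}}\int_{X\setminus B(x,r)}\frac{f(y)}{d(x,y)^{\delta}}\dd\mu(y),\qquad \|t_{r}(x,\cdot)\|_{L^{1}}\simeq\log(1/r),
\]
for which Cauchy--Schwarz against the probability kernel $T_{r}(x,\cdot)$ gives $\|f-\T_{r}f\|_{L^{2}}^{2}\lesssim\E(f,f)/\log(1/r)$ at once; the logarithm enters through the normalising constant. Composing $\T_{e^{-n}}$ with the rank-$N^{n}$ averaging projection over a cover by balls of radius $e^{-n}$ then yields $s_{N^{n}}(J)\lesssim n^{-1/2}$ and hence $(1+\Delta)^{-1}\in\Li(L^{2}(X,\mu))$, which is the precise form of ``eigenvalues grow logarithmically''. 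Two smaller remarks: your upper bound $\lambda_{n}\lesssim\log n$ via disjoint bumps is correct (each bump has energy $\simeq r^{\delta}\log(1/r)$ and squared $L^{2}$-norm $\simeq r^{\delta}$) but is not needed for the statement; and the identification of $t_{0}$ purely in terms of $\delta$ overreaches --- the paper leaves the exact threshold open, and the examples in Section \ref{sec:Examples} indicate it is governed by topological rather than metric data.
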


An intriguing aspect of the threshold $t_0>0$ is that it seems to be a topological invariant of $(X,d)$ that scales inversely with $\Delta$, see the examples in Section \ref{sec:Examples}. It would be interesting to understand the minimal value of $t_0$ from a Weyl Law point of view. In that way we would know that, in general, if $t\leq t_0$, the operator $e^{-t\Delta}$ cannot be trace-class and hence for $t\leq t_0/2$ not even Hilbert-Schmidt, as we observe in the examples. This has implications for the regularity of the heat kernel of $\Delta$, but can also be used to study KMS states for dynamical systems on Ahlfors regular spaces, see \cite{GRU}.

In Section \ref{sec:domain} we analyse the domain $\Dom\Delta$ and establish a close relation to the Dini continuous functions on the underlying metric space $(X,d)$. The latter form a Banach algebra $\D(X,d)$ containing all H\"older continuous functions. Specifically, we first show that $\W$ is a Banach module over $\D(X,d)$. Then, by studying the commutator of $\Ld$ with the multiplication operators of Dini continuous functions, we derive the same Banach module structure for $\Dom\Delta$. In particular we prove the following.

\begin{mainthm}[see Theorem {\ref{theorem:domain_module}}]\label{thm:mainB}
Let $h$ be a Dini continuous function on $(X,d)$. Then the multiplication operator $\m_h$ on $L^2(X,\mu)$ satisfies $\m_h:\Dom \Delta \to \Dom \Delta$ and the commutator $[\Delta , \m_h]$ extends to a bounded operator on $L^2(X,\mu)$.
\end{mainthm}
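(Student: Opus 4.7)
The plan is to identify the commutator $[\Delta,\m_h]$ with an explicit singular integral operator and then prove its $L^2$-boundedness by a Schur test using the Dini condition together with Ahlfors regularity. Throughout, I would work at the level of the Dirichlet form $\E$ and invoke the characterisation that $f\in\Dom\Delta$ precisely when $f\in\W$ and the linear functional $g\mapsto \E(f,g)$ is $L^2$-continuous. Since $h$ is in particular Dini (hence continuous) and $X$ is compact, $\m_h$ is a bounded self-adjoint operator on $L^2(X,\mu)$, and by the Banach-module property of $\W$ over $\D(X,d)$ stated just before the theorem, $\m_h$ restricts to a bounded operator on $\W$.

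For $f,g\in\W$, I would start from the definition of $\E$ and expand
\[
h(x)f(x)-h(y)f(y)=h(x)(f(x)-f(y))+f(y)(h(x)-h(y)),
\]
and symmetrically for $\m_h g$. Subtracting the two resulting expressions, the common term $\int\int h(x)(f(x)-f(y))(g(x)-g(y))d(x,y)^{-\delta}$ cancels, leaving
\[
\E(\m_h f,g)-\E(f,\m_h g)=\frac{1}{2}\int_X\!\int_X \frac{(h(x)-h(y))\bigl(f(y)g(x)-f(x)g(y)\bigr)}{d(x,y)^{\delta}}\dd\mu(y)\dd\mu(x).
\]
Exploiting the $x\leftrightarrow y$ symmetry of the integrand, this collapses to $\langle T_h f,g\rangle$ with
\[
(T_h f)(x)=\int_X \frac{h(x)-h(y)}{d(x,y)^{\delta}}\,f(y)\dd\mu(y).
\]
The core estimate is that the integral kernel $K_h(x,y):=(h(x)-h(y))d(x,y)^{-\delta}$ satisfies $|K_h(x,y)|\le \omega_h(d(x,y))d(x,y)^{-\delta}$, where $\omega_h$ is the modulus of continuity of $h$. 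Ahlfors $\delta$-regularity gives $\mu(B(x,r))\asymp r^{\delta}$, so a layer-cake computation yields
\[
\sup_{x\in X}\int_X \frac{\omega_h(d(x,y))}{d(x,y)^{\delta}}\dd\mu(y)\;\lesssim\; \int_0^{\diam(X)}\frac{\omega_h(r)}{r}\dd r\;<\;\infty,
\]
the finiteness being precisely the Dini condition. By the symmetry $|K_h(x,y)|=|K_h(y,x)|$, the symmetric Schur test applies and shows that $T_h$ extends from, say, $C(X)$ to a bounded operator on $L^2(X,\mu)$.

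To conclude Theorem~\ref{thm:mainB}, fix $f\in\Dom\Delta$. For every $g\in\W$, the identity $\E(f,\m_h g)=\langle\Delta f,\m_h g\rangle=\langle \m_h\Delta f,g\rangle$ combines with the previous paragraph to yield
\[
\E(\m_h f,g)=\langle \m_h\Delta f + T_h f,\,g\rangle,
\]
and the right-hand side defines an $L^2$-continuous functional of $g\in\W$. Hence $\m_h f\in\Dom\Delta$ with $\Delta\m_h f=\m_h\Delta f+T_h f$, so $[\Delta,\m_h]$ equals $T_h$ on $\Dom\Delta$ and extends to the bounded operator $T_h$ on all of $L^2(X,\mu)$. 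The main technical obstacle is the step justifying the pointwise/Fubini rearrangement that yields $T_h$: the kernel $d(x,y)^{-\delta}$ is non-integrable, so one must split into diagonal and off-diagonal pieces and exploit the cancellation provided by $h(x)-h(y)=O(\omega_h(d(x,y)))$ to gain the Dini factor before summing. Once this is made rigorous on the dense subalgebra of, e.g., Lipschitz functions in $\W$, a standard approximation argument transfers the identity to all of $\W$.
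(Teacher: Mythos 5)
Your argument is correct and follows essentially the same route as the paper: the same commutator identity $\E(\m_h f,g)-\E(f,\m_h g)=\langle \K_h f,g\rangle_{L^2}$ with kernel $(h(x)-h(y))d(x,y)^{-\delta}$, the same reduction via the form-characterisation of $\Dom\Delta$ (Lemma \ref{lem:Domain_in_terms_of_E} and Proposition \ref{prop: domain-commutator-form}), and the same annulus/Dini estimate, with the rearrangement justified, exactly as you indicate, by first establishing absolute convergence of the kernel integral. The only difference is in proving $L^2$-boundedness of the kernel operator: you use the symmetric Schur test, whereas the paper dominates $|\K_h f|$ pointwise by $\|h\|_{\D}$ times the Hardy--Littlewood maximal function and invokes the maximal theorem --- both give $\|\K_h\|\lesssim\|h\|_{\D}$.
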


Theorem \ref{thm:mainB} implies that all Dini continuous functions are in $\Dom\Delta$, see Corollary \ref{cor:Din_in_Dom}. In Subsection \ref{sec:integralrep} we then derive an integral representation of $\Delta$ on Dini continuous functions $f$, namely 
\begin{equation}\label{eq:intro3}
\Delta f(x)=\int_X\frac{f(x)-f(y)}{d(x,y)^{\delta}} \dd\mu(y).
\end{equation}
In Subsection \ref{sec:smooth} we obtain a distinctive property of $\Delta$: every H\"older continuous function is a smooth vector for $\Delta$, a property that cannot hold for any $\Delta_{\alpha}$.

The integral formula (\ref{eq:intro3}) (for complex-valued functions) has been employed before in cases where it could be well understood as a densely defined positive essentially self-adjoint operator on the Lipschitz continuous functions. Specifically, the second-named author used it on full shift spaces to study the Cuntz $C^*$-algebras in joint work with Goffeng \cite{GM}, and on spheres to study Bianchi groups in joint work with Şeng\"un \cite{MS}. In addition, Goffeng-Usachev \cite{GU} studied the operator on compact Riemannian manifolds and showed that it is a bounded perturbation of the logarithm of the Laplace-Beltrami operator.

Furthermore, Chen-Weth \cite{CW} recently studied the Poisson problem on bounded domains in $\mathbb R^N$ involving the logarithmic Laplacian, which is a singular operator with symbol $2\log |\xi|$. The integral representation of this operator on $\mathbb R^N$ resembles (\ref{eq:intro3}), except for the fact that it is localised around $x\in \mathbb R^N$ so that the integral over $\mathbb R^N$ converges for nice functions. An interesting aspect of it has been observed by Chen-V\'eron \cite{CV}, asserting that if the volume of the bounded domain is large enough, the first eigenvalue of the operator will be negative, whereas the eigenvalues become positive eventually. This does not happen in our situation and the eigenvalues of $\Delta$ are always positive. 

We conclude the paper with several examples in Section \ref{sec:Examples}. First of all, Theorems \ref{thm:mainA} and \ref{thm:mainB} immediately imply that every Ahlfors regular metric-measure space can be viewed as a noncommutative manifold: the Banach algebra of Dini continuous functions, the Hilbert space $L^{2}(X,\mu)$ and the operator $\Delta$ assemble to a so-called spectral triple in the sense of noncommutative geometry. 

Subsequently we showcase that $\Delta$ generalises several well-known operators. Specifically, apart from being a bounded perturbation of the logarithm of the Laplace-Beltrami operator on compact Riemannian manifolds, $\Delta$ generalises the Julien-Putnam \cite{JP} operator on the Cantor sets of full shifts with the Bernoulli measure. Also, it acts as a scalar on the Peter-Weyl $L^2$-decomposition in the case of compact groups with an Ahlfors regular Haar measure, in which case $\Delta$ can be viewed as a Casimir-type operator. Moreover, on closed intervals, $\Delta$ is diagonalised by the Legendre polynomials with eigenvalues the harmonic numbers. The latter has already been observed by Tuck \cite{T} in 1964 and is a remarkable fact as the interval is a manifold with boundary.

Finally, we prove that for a Kleinian group $\Gamma$, either of the first kind, or convex cocompact of the second kind, the action of $\Gamma$ on its limit set $\Lambda_{\Gamma}$ equipped with the Patterson-Sullivan measure $\mu$, is differentiable with respect to $\Delta$. That is, except for $\Delta$ having bounded commutators with the multiplication operators of Dini continuous functions on $\Lambda_{\Gamma}$, every $\gamma \in \Gamma$ viewed as a unitary operator on $L^2(\Lambda_{\Gamma},\mu)$ has a bounded commutator with $\Delta$. The latter cannot hold for the fractional Dirichlet Laplacian $\Delta_{\alpha}$ due to a tracial obstruction of Connes in noncommutative geometry, as we discuss in Remark \ref{rem:CBA}.

\section{Preliminaries}\label{sec_Prelim}
First we fix some notation that will often be used. Specifically, if $F$ and $G$ are real valued functions on some parameter space $Z$, we write $F \lesssim G$ whenever there is a constant $C>0$ such that for all $z\in Z$ we have $F(z) \leq C G(z)$. Similarly, we define the symbol $\gtrsim$ and write $F\simeq G$ if $F \lesssim G$ and $F \gtrsim G$. Further, given a metric-measure space $(X,d,\mu)$ and a measurable subset $Y\subset X$ as well as a measurable function $f:Y\to \mathbb{R}$ we denote by
\[\dashint_{Y}f(y)\dd\mu(y):=\frac{1}{\mu(Y)}\int_{Y} f(y)\dd\mu(y),\]
the average of $f$ over $Y$.

\subsection{Ahlfors regular metric-measure spaces} 
By a \emph{metric-measure space} we mean a triple $(X,d,\mu)$ such that $(X,d)$ is a locally compact second-countable metric space equipped with a finite Borel measure $\mu$. 
\begin{definition}[{\cite{MT}}] Let $\delta >0$. A metric-measure space $(X,d,\mu)$ is \textit{Ahlfors} $\delta$\textit{-regular} if there is a constant $C\geq 1$, so that for every $x\in X$ and $0\leq r<\diam X$ it holds that 
\begin{equation*}
C^{-1}r^{\delta}\leq \mu(\cB(x,r))\leq Cr^{\delta}.
\end{equation*}
\end{definition}

We note that $(X,d,\mu)$ can be Ahlfors $\delta$-regular for a unique $\delta>0$. In that case, $\mu$ is comparable to the $\delta$-dimensional Hausdorff measure and $\delta$ coincides with the Hausdorff, box-counting and Assouad dimensions of $(X,d)$, see \cite{Falc, MT}. Moreover, the finiteness of $\mu$ then implies that the metric space $(X,d)$ is totally bounded, hence $\diam(X)<\infty$. 

The next lemma contains useful regularity estimates which are immediately derived from decomposing the integrals over annuli. For the fourth estimate the assumption $\mu(X)<\infty$ is important, for if $\mu(X)=\infty$, the integral will be infinite too. This is the main reason why we consider the logarithmic Dirichlet Laplacian for finite measures, while its fractional counterpart also makes sense for infinite measures.

\begin{lemma}\label{lem:Ahlfors_estimates}
Let $0<r\leq \diam(X)$ and $s>0$. Then, 
\begin{enumerate}[(1)]
\item for every $x\in X$ it holds that, $$\int_{B(x,r)} \frac{1}{d(x,y)^{\delta -s}} \dd\mu(y)\leq Ce^{\delta+s}(e^s-1)^{-1}r^s;$$ 
\item for every $x\in X$ it holds that, $$\int_{X\setminus B(x,r)} \frac{1}{d(x,y)^{\delta+s}}\dd\mu(y)\leq Ce^{\delta+s}(e^s-1)^{-1}r^{-s};$$
\item it holds that, $$\int_{B(x,r)\setminus B(x,e^{-1}r)}\frac{1}{d(x,y)^{\delta}}\dd\mu(y)\lesssim 1;$$
\item there is $0<c<1$ so that, if in addition $0<r<c$, for every $x\in X$ we have, $$\int_{X\setminus B(x,r)} \frac{1}{d(x,y)^{\delta}}\dd\mu(y)\simeq \log(r^{-1}).$$
\end{enumerate}
\end{lemma}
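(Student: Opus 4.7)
The unifying idea is to exploit the Ahlfors regularity through exponential (rather than dyadic) annular decompositions centered at $x$, which dovetails naturally with the constants $e^s$, $e^{\delta}$ appearing in the statement.

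For (1), I would decompose $B(x,r) = \bigsqcup_{n\geq 0} A_n$, where $A_n := B(x, e^{-n}r) \setminus B(x, e^{-n-1}r)$. On $A_n$ the integrand $d(x,y)^{s-\delta}$ is bounded by its value at the inner radius, and by Ahlfors regularity $\mu(A_n) \leq C e^{-n\delta} r^{\delta}$. Multiplying these gives a term proportional to $e^{-ns}r^s$, so summing a geometric series yields the factor $(e^s-1)^{-1}$. For (2), the same strategy works with outward annuli $B(x, e^{n+1}r) \setminus B(x, e^n r)$: the integrand $d^{-(\delta+s)}$ is bounded by $(e^n r)^{-(\delta+s)}$, the measure by $C e^{(n+1)\delta}r^{\delta}$, and the resulting series in $e^{-ns}$ again produces $(e^s-1)^{-1}$. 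Estimate (3) is the one-annulus special case: $d(x,y) \geq e^{-1}r$ on the annulus, whose measure is $\leq Cr^{\delta}$, so the integral is controlled by $Ce^{\delta}$ uniformly in $r$.

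The only item that requires some thought is (4). For the upper bound, I would chop $X \setminus B(x,r)$ into outward annuli $B(x, e^{n+1}r) \setminus B(x, e^n r)$; each contributes at most a constant by the reasoning in (3), and the number of annuli needed to cover $X$ is at most $\lceil \log(\diam(X)/r) \rceil$, yielding the $\log(r^{-1})$ upper bound once $r$ is smaller than some fixed $c < \diam(X)$. The matching lower bound is the subtle point: it requires \emph{lower} bounds on annular measures, whereas Ahlfors regularity provides two-sided bounds only on balls. The standard fix is to work with wider annuli: choose a dilation factor $K>1$ large enough that $C^{-1}K^{\delta} > 2C$, so that
\begin{equation*}
\mu\bigl(B(x, K\rho)\setminus B(x, \rho)\bigr) \geq C^{-1}(K\rho)^{\delta} - C\rho^{\delta} \gtrsim \rho^{\delta}
\end{equation*}
for every $\rho>0$ with $K\rho \leq \diam(X)$. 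Then each such annulus contributes a positive constant (at least $K^{-\delta}$) to the integral, and roughly $\log(r^{-1})/\log K$ disjoint annuli fit between $r$ and $\diam(X)$, producing the lower bound.

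The main technical obstacle is this last step: one cannot use annuli of ratio $e$ for the lower bound without first verifying a nontrivial lower estimate on annular measure, and one must absorb the constant $K$ into the implicit constants in the $\simeq$ symbol. The role of $c$ is then clear: it is any number strictly smaller than $\diam(X)/K$, which ensures that at least one of the wide annuli fits inside $X$ and that $\log(r^{-1})$ is bounded away from zero, so that both directions of the comparison are nondegenerate.
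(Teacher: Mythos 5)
Your proposal is correct and follows essentially the same route as the paper: items (1)--(3) are obtained by exactly the exponential annular decompositions that the paper delegates to the cited reference, and item (4) uses the same outward annuli $B(x,e^{n+1}r)\setminus B(x,e^{n}r)$ appearing in the paper's own proof, with the same count of roughly $\log(r^{-1}\diam(X))$ annuli. If anything you are more careful than the paper, which simply asserts the two-sided comparison from Ahlfors regularity: your wide-annulus argument for the lower bound, choosing $K$ with $K^{\delta}>2C^{2}$ so that $\mu\bigl(B(x,K\rho)\setminus B(x,\rho)\bigr)\gtrsim\rho^{\delta}$, supplies precisely the detail that is needed when $C^{-1}e^{\delta}\leq C$ and that the paper leaves implicit.
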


\begin{proof}
For the first three inequalities see \cite[Lemma 2.5]{GSV}. For the fourth estimate we write $$X\setminus B(x,r)=\bigcup_{n=0}^{\ceil{\log(r^{-1} \textnormal{diam}(X))}}B(x,e^{n+1}r)\setminus B(x,e^{n}r)$$ and using the Ahlfors regularity we obtain $$\int_{X\setminus B(x,r)} \frac{1}{d(x,y)^{\delta}}\dd\mu(y)\simeq \ceil{\log(r^{-1} \textnormal{diam}(X))}.$$ The desired estimate follows by finding small enough $0<c<1$ so that if $0<r<c$, then it holds $\ceil{\log(r^{-1} \textnormal{diam}(X))}\simeq \log(r^{-1})$. 
\end{proof}

\subsection{Dirichlet forms}\label{sec:Dir}
For the theory of Dirichlet forms we refer to \cite{FOT}. Here we briefly recall the main facts needed for the present paper. We note that all Hilbert spaces are assumed to be real for consistency with the literature, unless stated otherwise.

Let $H$ be a Hilbert space and $\Dom\Q \subset H$ be a linear subspace that is the domain of a bilinear form $\Q:\Dom\Q \times\Dom\Q \to \mathbb{R}$. Then, $\Dom\Q$ carries the following inner product and induced norm
\begin{equation}
\label{eq: form-inner-product}
\langle f,g\rangle_{\Q}:=\langle f,g\rangle_{H}+\Q(f,g), \quad \|f\|_{\Q}^{2}:=\langle f,f\rangle_{\Q}.
\end{equation}
The form $\Q$ is \emph{densely defined} if $\Dom\Q\subset H$ is dense and \emph{closed} if $\Dom\Q$ is a Hilbert space with respect to the inner product \eqref{eq: form-inner-product}.

Given a densely defined closed form, the inner product on $H$ defines a bilinear form
\[H\times\Dom\Q\to \mathbb{R},\quad (f,g)\mapsto \langle f,g\rangle_{H},\]
so that 
\begin{align*}
|\langle f,g\rangle_H|\leq \|f\|_{H}\|g\|_{\Q}.
\end{align*}
Thus $f\in H$ defines a continuous linear functional on the Hilbert space $\Dom \Q$ and hence there is a bounded operator $R_{\Q}:H\to \Dom\Q$ determined by
\[\langle R_{\Q}f,g\rangle_{\Q}=\langle f,g\rangle_{H}.\]
Using the contractive embedding $\Dom\Q\hookrightarrow H$, we view $R_{\Q}$ as an operator from $H$ to $H$. Further, the bounded operator $R_{\Q}$ is injective, self-adjoint and $\|R_{\Q}\|\leq 1$. In particular, it has dense range.

\begin{thm}
\label{thm: form-operator-equivalence}
There is a one-to-one correspondence between densely defined closed bilinear forms $\Q$ on $H$ and positive self-adjoint operators $\A_{\Q}:\Dom\A_{\Q}\to H$ with $\Dom \A_{\Q}:=R_{\Q}H$, defined for $f\in H$ as $$\A_{\Q} R_{\Q}f:=f-R_{\Q}f.$$ In fact, we have $R_{\Q}=(1+\A_{\Q})^{-1}$, $\Dom \Q=\Dom\A_{\Q}^{1/2}$ and $\Q(f,g)=\langle \A_{\Q}^{1/2}f,\A_{\Q}^{1/2}g\rangle_{H}$.
\end{thm}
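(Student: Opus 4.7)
The plan is to build the two sides of the correspondence explicitly and then pair them up. Most of the analytic substance is concentrated in the forward direction; the reverse is a consequence of standard spectral calculus.

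For the forward direction, start with a densely defined closed form $\Q$ and the bounded self-adjoint operator $R_\Q:H\to H$ already constructed before the theorem statement. The identity $\langle R_\Q f,g\rangle_\Q=\langle f,g\rangle_H$ says that $R_\Q f=0$ forces $f\perp\Dom\Q$, hence $f=0$ by density. So $R_\Q$ is injective with dense range and the prescription $\A_\Q R_\Q f:=f-R_\Q f$ unambiguously defines a densely defined operator on $\Dom \A_\Q:=R_\Q H$. Rearranging gives $(1+\A_\Q)R_\Q=\id_H$, so $R_\Q=(1+\A_\Q)^{-1}$ as claimed. For positivity, the very defining relation yields, for $u=R_\Q f$,
\[
\langle \A_\Q u,u\rangle_H=\langle f-R_\Q f,R_\Q f\rangle_H=\langle R_\Q f,R_\Q f\rangle_\Q-\langle R_\Q f,R_\Q f\rangle_H=\Q(u,u)\geq 0.
\]
Self-adjointness of $\A_\Q$ follows because $R_\Q$ is bounded, self-adjoint, injective with dense range: its unbounded inverse $R_\Q^{-1}$ is self-adjoint on $R_\Q H$, and $\A_\Q=R_\Q^{-1}-\id$ then inherits self-adjointness on the same domain.

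Next I would identify the form. The computation above already gives $\Q(u,v)=\langle \A_\Q u,v\rangle_H$ for $u\in \Dom\A_\Q$ and $v\in\Dom\Q$. The positive self-adjoint operator $\A_\Q$ has its own closed form $\widetilde\Q(f,g):=\langle \A_\Q^{1/2}f,\A_\Q^{1/2}g\rangle_H$ with domain $\Dom\A_\Q^{1/2}$, and on $\Dom\A_\Q$ this coincides with $\Q$. The key auxiliary fact is that $\Dom\A_\Q=R_\Q H$ is dense in $(\Dom\Q,\langle\cdot,\cdot\rangle_\Q)$: any $v\in\Dom\Q$ orthogonal to $R_\Q H$ in the form inner product satisfies
\[
0=\langle R_\Q f,v\rangle_\Q=\langle f,v\rangle_H\qquad\text{for all } f\in H,
\]
forcing $v=0$. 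Since $\Dom\A_\Q$ is standardly a core for $\A_\Q^{1/2}$ (hence for $\widetilde\Q$), and since both $\Q$ and $\widetilde\Q$ are closed extensions of the same symmetric form on this common core, they must coincide: $\Dom\Q=\Dom\A_\Q^{1/2}$ and $\Q(f,g)=\langle \A_\Q^{1/2}f,\A_\Q^{1/2}g\rangle_H$.

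For the reverse direction, start with a positive self-adjoint $\A$ and set $\Q_\A(f,g):=\langle \A^{1/2}f,\A^{1/2}g\rangle_H$ on $\Dom\A^{1/2}$. Density is immediate since $\Dom\A\subset\Dom\A^{1/2}$ is dense, and closedness is equivalent to closedness of $\A^{1/2}$, which holds by the functional calculus. Then applying the forward construction to $\Q_\A$ produces $R_{\Q_\A}=(1+\A)^{-1}$ and recovers $\A$ as the associated operator, so the two maps are mutually inverse. The single step that deserves care, and that I would treat as the main obstacle, is the core/density argument establishing $\Dom\Q=\Dom\A_\Q^{1/2}$ from the fact that the two closed forms agree on a common dense subspace; everything else reduces to algebraic manipulation of the defining identity for $R_\Q$ together with standard self-adjoint operator theory.
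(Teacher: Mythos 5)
Your argument is correct and is the standard proof of the form--operator correspondence (Kato's representation theorem, cf.\ \cite{FOT}); the paper itself states this result without proof, as background material from the Dirichlet form literature. The step you rightly single out as the crux --- that $R_{\Q}H$ is dense in $(\Dom\Q,\langle\cdot,\cdot\rangle_{\Q})$ and is a core for $\A_{\Q}^{1/2}$, so that the two closed forms agreeing on this common core must coincide --- is handled correctly, and the rest is the routine algebra of the defining identity for $R_{\Q}$.
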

Here we are interested in the Hilbert space $H=L^{2}(X,\mu)$. For functions $f,g\in L^{2}(X,\mu)$ we say that $g$ is a \emph{normal contraction} of $f$, written $g\prec f$, if for $\mu$-almost all $x,y\in X$ we have
\begin{equation*}
|g(x)|\leq |f(x)|,\quad |g(x)-g(y)|\leq |f(x)-f(y)|.
\end{equation*}
\begin{definition} A densely defined closed bilinear form $\Q$ on $L^{2}(X,\mu)$ is a \emph{Dirichlet form} if 
\begin{equation}
\label{eq: normalcontractionproperty}
f\in\Dom \Q,\,\, g\prec f \Rightarrow g\in\Q,\,\, \Q(g,g)\leq \Q(f,f).
\end{equation}
\end{definition}
An equivalent formulation of the normal contraction property \eqref{eq: normalcontractionproperty} is known as the Markov property. Dirichlet forms and their associated operators have a rich theory. Although we will not use the normal contraction property in any essential way in the present paper, the closed bilinear forms we consider are Dirichlet forms.

\section{The logarithmic Dirichlet Laplacian}\label{sec:logLap}
Let $(X,d,\mu)$ be an Ahlfors $\delta$-regular metric-measure space and consider the real vector space
\begin{equation}
\label{eq: Sobolevspace}
\W:=\left\{f\in L^2(X,\mu):\int_X\int_X\frac{|f(x)-f(y)|^2}{d(x,y)^{\delta}}\dd\mu(y)\dd\mu(x)<\infty\right\},
\end{equation}
and the bilinear form $\E:\W\times \W \to \mathbb R$ given by
\begin{equation}
\label{eq: formdef}
\E(f,g):=\frac{1}{2}\int_X\int_X \frac{(f(x)-f(y))(g(x)-g(y))}{d(x,y)^{\delta}}\dd\mu(y)\dd\mu(x).
\end{equation}
The integral defining $\E$ is to be interpreted as an integral over $X\times X\setminus D$, with $$D:=\{(x,x):x\in X\}\subset X\times X,\quad (\mu\times \mu)(D)=0,$$ the diagonal. We denote by $\Hol_{\alpha}(X,d)$ the space of H{\"o}lder continuous functions of exponent $0<\alpha \leq 1$ on $X$. Since $\mu(X)<\infty$, we have that $\Hol_{\alpha}(X,d)$ is a dense subset of $L^{2}(X,\mu)$. For $f\in\Hol_{\alpha}(X,d)$ we denote by $\Hol_{\alpha}(f)$ the $\emph{H{\"o}lder constant}$ of $f$, which is the smallest positive real number such that for all $x,y\in X$ we have  $|f(x)-f(y)|\leq \Hol_{\alpha}(f)d(x,y)^{\alpha}$.
\begin{lemma}\label{lem:Hol_dense} For each $0<\alpha \leq 1$ we have $\Hol_{\alpha}(X,d)\subset \W$. In particular the bilinear form $\E$ is densely defined on $L^{2}(X,\mu)$.
\end{lemma}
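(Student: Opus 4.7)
The plan is to show that for every $f \in \Hol_{\alpha}(X,d)$ the double integral defining $\W$ is finite, by splitting the inner integral into a near-diagonal piece (where H\"older control tames the singular kernel) and a far-diagonal piece (where the kernel is bounded and $f$ is bounded). Density then follows immediately from the fact, already noted in the excerpt, that $\Hol_{\alpha}(X,d)$ is dense in $L^{2}(X,\mu)$.

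Fix $f \in \Hol_{\alpha}(X,d)$ and $x \in X$. Since $(X,d)$ has finite diameter and $f$ is continuous, $f$ is bounded, so let $M := \|f\|_{\infty}$. Choose any $0 < r \leq \diam(X)$ (I would use the constant $c$ from Lemma \ref{lem:Ahlfors_estimates}(4), or simply any fixed $r$). I would then estimate
\[
\int_{X}\frac{|f(x)-f(y)|^{2}}{d(x,y)^{\delta}}\dd\mu(y) = \int_{B(x,r)} + \int_{X\setminus B(x,r)}.
\]
On $B(x,r)$, I use $|f(x)-f(y)|^{2} \leq \Hol_{\alpha}(f)^{2}d(x,y)^{2\alpha}$, reducing the integrand to $\Hol_{\alpha}(f)^{2}d(x,y)^{-(\delta-2\alpha)}$. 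Since $2\alpha > 0$, Lemma \ref{lem:Ahlfors_estimates}(1) (with $s = 2\alpha$) bounds this integral by a constant depending only on $\alpha$, $\delta$, the Ahlfors constant and $r$, and crucially independent of $x$. On $X\setminus B(x,r)$, the kernel satisfies $d(x,y)^{-\delta} \leq r^{-\delta}$, and $|f(x)-f(y)|^{2}\leq 4M^{2}$, so this piece is bounded by $4M^{2}r^{-\delta}\mu(X)$, again uniformly in $x$.

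Combining these two bounds yields a uniform constant $K = K(\alpha,\delta,C,r,\mu(X),M,\Hol_{\alpha}(f))$ such that
\[
\int_{X}\frac{|f(x)-f(y)|^{2}}{d(x,y)^{\delta}}\dd\mu(y) \leq K \quad \textrm{for every } x\in X.
\]
Integrating once more over $x\in X$ and using $\mu(X) < \infty$ gives the finiteness of the defining integral of $\W$, so $f\in\W$. The conclusion that $\E$ is densely defined on $L^{2}(X,\mu)$ then follows by combining this inclusion with the density of $\Hol_{\alpha}(X,d)$ in $L^{2}(X,\mu)$ already recorded above.

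There is no serious obstacle here: the only subtle point is ensuring that the H\"older exponent $2\alpha$ in the near-diagonal estimate is strictly positive so that Lemma \ref{lem:Ahlfors_estimates}(1) applies with a finite bound; this is automatic from $0 < \alpha \leq 1$. The far-diagonal piece is entirely elementary once $\mu(X) < \infty$ is invoked, which is exactly the finiteness hypothesis built into the Ahlfors regular setting used throughout the paper.
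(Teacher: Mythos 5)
Your proof is correct and uses the same essential mechanism as the paper: the H\"older bound reduces the singular kernel to $d(x,y)^{-(\delta-2\alpha)}$, which Lemma \ref{lem:Ahlfors_estimates}(1) controls, and density of $\Hol_{\alpha}(X,d)$ in $L^{2}(X,\mu)$ finishes the argument. The only difference is that your near/far splitting is superfluous: since $\diam(X)<\infty$, the paper applies the H\"older estimate and Lemma \ref{lem:Ahlfors_estimates}(1) with $r=\diam(X)$ over the whole space in one step.
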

\begin{proof} For $f\in \Hol_{\alpha}(X,d)$ Lemma \ref{lem:Ahlfors_estimates} gives,
\begin{align*}
\int_{X}\int_{X}\frac{|f(x)-f(y)|^2}{d(x,y)^{\delta}}\dd\mu(y)\dd\mu(x)&\leq \Hol_{\alpha}(f)^{2}\int_{X}\int_{X}\frac{1}{d(x,y)^{\delta-2\alpha}}\dd\mu(y)\dd\mu(x)\\
&\leq \Hol_{\alpha}(f)^{2}\mu(X)Ce^{\delta+2\alpha}(e^{2\alpha}-1)^{-1}\textnormal{diam}(X)^{2\alpha},
\end{align*}
and thus the assertion follows.
\end{proof}

\begin{remark}
In the proof of Lemma \ref{lem:Hol_dense} we have used the fact that $\mu(X)<\infty$. If $\mu(X)=\infty$, the form $\E$ typically will not be densely defined, for instance in the case of $X=\mathbb R^N$. In \cite{CW} the authors define a logarithmic Laplacian on $\mathbb R^N$ via a certain localisation procedure which yields an operator that is different from the one considered in the present paper.
\end{remark}

\begin{prop} The bilinear form $\mathcal{E}$ defined in \eqref{eq: formdef} is a Dirichlet form. 
\end{prop}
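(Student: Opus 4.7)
The plan is to verify the three defining axioms of a Dirichlet form on $L^2(X,\mu)$: dense definition, closedness, and the normal contraction property. Dense definition is immediate from the preceding lemma together with the density of $\Hol_\alpha(X,d)$ in $L^2(X,\mu)$. The normal contraction property is also essentially free: if $f\in\W$ and $g\prec f$, then $|g(x)-g(y)|\leq |f(x)-f(y)|$ for $(\mu\times\mu)$-a.e.\ $(x,y)$, so pointwise monotonicity under the integral sign immediately yields $\mathcal{E}(g,g)\leq \mathcal{E}(f,f)<\infty$ and in particular $g\in\W$.

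The substantive step is closedness. The plan is to encode the form norm as the norm of a genuine $L^2$-space by means of an isometric embedding
$$\iota\colon\W\longrightarrow L^2(X,\mu)\oplus L^2\bigl((X\times X)\setminus D,\mu\times\mu\bigr),\qquad \iota(f):=(f,D_f),$$
where $D_f(x,y):=(f(x)-f(y))\,d(x,y)^{-\delta/2}$. By construction $\|\iota(f)\|^2=\|f\|_{L^2}^2+2\mathcal{E}(f,f)=\|f\|_{\mathcal{E}}^2$, so completeness of $(\W,\|\cdot\|_{\mathcal{E}})$ reduces to showing that the image of $\iota$ is closed in the Hilbert space on the right.

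To that end I would take a sequence $(f_n)\subset\W$ Cauchy for $\|\cdot\|_{\mathcal{E}}$. Then $(f_n)$ converges to some $f$ in $L^2(X,\mu)$ and $(D_{f_n})$ converges to some $D$ in $L^2((X\times X)\setminus D,\mu\times\mu)$. Extracting a subsequence so that $f_n\to f$ pointwise $\mu$-a.e., one obtains $D_{f_n}(x,y)\to (f(x)-f(y))d(x,y)^{-\delta/2}$ pointwise $(\mu\times\mu)$-a.e.\ on $(X\times X)\setminus D$. Extracting a further subsequence along which $D_{f_n}\to D$ pointwise a.e.\ forces $D(x,y)=(f(x)-f(y))d(x,y)^{-\delta/2}$, so $f\in\W$ and $\|f_n-f\|_{\mathcal{E}}\to 0$.

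The only delicate point is the pointwise identification of the limit $D$ with the difference quotient of $f$, which requires the two successive subsequence extractions described above; this is the standard manoeuvre for jump-type bilinear forms associated to symmetric, non-negative kernels, so no real obstacle is expected.
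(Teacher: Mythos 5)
Your proof is correct, and where the paper actually argues (dense definition via the preceding lemma, and the normal contraction property via pointwise monotonicity of the integrand on $X\times X\setminus D$) you do exactly the same thing. The one place you genuinely diverge is closedness: the paper simply cites \cite[Example 1.2.4]{FOT}, whereas you supply the standard self-contained argument for jump-type forms, namely the isometric-type embedding $f\mapsto (f,D_f)$ into $L^2(X,\mu)\oplus L^2\bigl((X\times X)\setminus D,\mu\times\mu\bigr)$ followed by the double subsequence extraction to identify the $L^2$-limit of $D_{f_n}$ with $D_f$; this is essentially the content of the cited example, so the mathematics is the same, but your version has the advantage of being verifiable without consulting the reference. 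One cosmetic slip: with your normalisation $\|\iota(f)\|^2=\|f\|_{L^2}^2+2\E(f,f)$, which is not equal to $\|f\|_{\E}^2=\|f\|_{L^2}^2+\E(f,f)$ but is equivalent to it (or insert a factor $2^{-1/2}$ into $D_f$); either fix leaves the completeness argument untouched.
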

\begin{proof}
From \cite[Example 1.2.4]{FOT} we have that $\E$ is closed, hence $\W$ is a Hilbert space. Moreover, the normal contraction property can proved directly using the definition of $\Dom\E$: 
if $f\in\Dom \E$ and $g\prec f$ then $g\in L^2(X,\mu)$ and the inequality
\[\frac{|g(x)-g(y)|}{d(x,y)^{\delta}}\leq \frac{|f(x)-f(y)|}{d(x,y)^{\delta}},\]
holds almost everywhere on $X\times X\setminus D$.
\end{proof}

\begin{definition}The \textit{logarithmic Dirichlet Laplacian} on $(X,d,\mu)$ is the self-adjoint operator $\Delta$ associated to the Dirichlet form \eqref{eq: formdef} via Theorem \ref{thm: form-operator-equivalence}.
\end{definition}
We write $L^{2}_{\mathbb{C}}(X,\mu)$ for the Hilbert space of complex valued $L^{2}$-functions on $X$. The space $\W_{\mathbb{C}}$ is defined as in Equation \eqref{eq: Sobolevspace}, using functions in $L^{2}_{\mathbb{C}}(X,\mu)$. The relevant bilinear form is then
\[\mathcal{E}_{\mathbb{C}}(f,g):=\frac{1}{2}\int_X\int_X \frac{\overline{(f(x)-f(y))}(g(x)-g(y))}{d(x,y)^{\delta}}\dd\mu(y)\dd\mu(x).\]
Under the identification $L^{2}_{\mathbb{C}}(X,\mu)\cong L^{2}(X,\mu)\otimes \mathbb{C}$, the operator $\Delta_{\mathbb{C}}$ associated to the Dirichlet form $\mathcal{E}_{\mathbb{C}}$ corresponds to $\Delta\otimes 1$. In the remainder of the paper, we work with the real operator $\Delta$. The results of Sections \ref{sec:resolvent} and \ref{sec:domain} imply the analogous results for $\Delta_{\mathbb{C}}$.
\section{Analysis of the resolvent}\label{sec:resolvent}
In this section we show that $\Ld$ has compact resolvent and hence discrete spectrum. Also, we prove that its eigenvalues grow logarithmically fast. This behaviour has implications for the heat semigroup $\{e^{-t\Ld}\}_{t>0}$, which consists of trace class operators when $t$ is large enough. In noncommutative geometry this property is known as $\Li$\emph{-summability} \cite{Connestheta, GRU}. 

\subsection{Compactness}
We prove the compactness of $(1+\Delta)^{-1}$ by showing that the embedding $\W\hookrightarrow L^2(X,\mu)$ is compact. This suffices by the following general observation.
\begin{lemma}Let $\mathcal{Q}$ be a densely defined closed bilinear form and $\A$ the associated operator. The embedding $\Dom \A\hookrightarrow \Dom \Q$ is continuous.
\end{lemma}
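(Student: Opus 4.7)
The plan is to show that on $\Dom \A$ the form norm $\|\cdot\|_{\Q}$ is dominated by the graph norm $\|\cdot\|_{\A}$ defined by $\|f\|_{\A}^{2}:=\|f\|_{H}^{2}+\|\A f\|_{H}^{2}$. Continuity of the embedding is then immediate from the definition of the operator topology induced by a norm.

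First I would invoke Theorem \ref{thm: form-operator-equivalence} to identify the two structures in terms of $\A$: one has $\Dom \Q=\Dom \A^{1/2}$, and the form norm on $\Dom \Q$ satisfies
\[
\|f\|_{\Q}^{2}=\|f\|_{H}^{2}+\Q(f,f)=\|f\|_{H}^{2}+\|\A^{1/2}f\|_{H}^{2}.
\]
Since $\A$ is positive self-adjoint with $\A^{1/2}$ its positive square root, $\Dom \A\subset \Dom \A^{1/2}=\Dom \Q$ holds automatically by the functional calculus, so the inclusion makes sense.

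Next, for $f\in\Dom \A$ I would estimate the only non-trivial piece, $\|\A^{1/2}f\|_{H}^{2}$, by writing it via the identity
\[
\|\A^{1/2}f\|_{H}^{2}=\langle \A^{1/2}f,\A^{1/2}f\rangle_{H}=\langle \A f,f\rangle_{H},
\]
which is valid because $f\in\Dom \A\subset \Dom \A^{1/2}$ and $\A^{1/2}$ is self-adjoint (alternatively, read it off from the spectral theorem). A Cauchy--Schwarz step followed by the AM--GM inequality then gives
\[
\langle \A f,f\rangle_{H}\leq \|\A f\|_{H}\|f\|_{H}\leq \tfrac{1}{2}\bigl(\|\A f\|_{H}^{2}+\|f\|_{H}^{2}\bigr).
\]

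Combining these observations yields
\[
\|f\|_{\Q}^{2}\leq \|f\|_{H}^{2}+\tfrac{1}{2}\bigl(\|\A f\|_{H}^{2}+\|f\|_{H}^{2}\bigr)\leq \tfrac{3}{2}\|f\|_{\A}^{2},
\]
so the inclusion $\Dom \A\hookrightarrow \Dom \Q$ is bounded with constant $\sqrt{3/2}$. There is no real obstacle here: the statement is essentially a consequence of the operator-theoretic description of $\Q$ provided by Theorem \ref{thm: form-operator-equivalence}, together with the elementary inequality $2ab\leq a^{2}+b^{2}$. The only point that deserves a moment's care is justifying $\|\A^{1/2}f\|_{H}^{2}=\langle \A f,f\rangle_{H}$ for $f\in\Dom \A$, which one either cites from the spectral calculus for positive self-adjoint operators or derives directly from self-adjointness of $\A^{1/2}$ and the composition law $\A^{1/2}\A^{1/2}=\A$ on $\Dom \A$.
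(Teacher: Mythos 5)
Your proof is correct and follows essentially the same route as the paper: both reduce $\Q(f,f)$ to $\langle \A f,f\rangle_{H}$ for $f\in\Dom\A$ (the paper via the form--operator correspondence directly, you via $\|\A^{1/2}f\|_{H}^{2}$) and then apply Cauchy--Schwarz with AM--GM to obtain the same constant $3/2$. No issues.
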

\begin{proof} Let $f\in\Dom \A$. Then
\begin{align*}
\langle f,f\rangle_{\mathcal{Q}} =\langle f,f\rangle_{L^2}+\frac{1}{2}\left(\langle \A f,f\rangle_{L^2}+\langle f,\A f\rangle_{L^2}\right)\leq \frac{3}{2}\left(\langle f,f\rangle_{L^2}+\langle \A f,\A f\rangle_{L^2}\right)=\frac{3}{2}\langle f,f\rangle_{\A},
\end{align*}
which proves that the embedding of Hilbert spaces $\Dom\A\hookrightarrow\Dom \mathcal{Q}$ is continuous.
\end{proof}

\begin{prop}\label{prop:compact_resolvent}
The logarithmic Dirichlet Laplacian has compact resolvent.
\end{prop}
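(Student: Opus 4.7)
The plan is to prove compactness of the embedding $\W \hookrightarrow L^{2}(X,\mu)$. By the preceding lemma, $\Dom \Delta \hookrightarrow \W$ is continuous, so $(1+\Delta)^{-1}$ factors as the composition $L^{2}(X,\mu) \to \Dom \Delta \hookrightarrow \W \hookrightarrow L^{2}(X,\mu)$ with the first two arrows bounded; compactness of the final embedding then gives the statement.

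To establish compactness of the embedding I would construct finite-rank approximations built from the Ahlfors regularity. For each $n\geq 1$, produce a finite measurable partition $\{Y_{i}^{n}\}_{i=1}^{N_n}$ of $X$ with $\diam(Y_{i}^{n})\leq r_n$ and $\mu(Y_{i}^{n})\simeq r_{n}^{\delta}$, for some sequence $r_n\to 0$, and let $P_{n}\colon L^{2}(X,\mu)\to L^{2}(X,\mu)$ be the orthogonal projection onto the finite-dimensional subspace of functions constant on each $Y_{i}^{n}$. A Jensen-plus-Ahlfors computation yields the Poincar\'e-type bound
\[
\|f-P_{n}f\|_{L^{2}}^{2} \leq \sum_{i}\frac{1}{\mu(Y_{i}^{n})}\int_{Y_{i}^{n}}\!\int_{Y_{i}^{n}} |f(x)-f(y)|^{2}\,\dd\mu(x)\dd\mu(y) \lesssim \sum_{i}\int_{Y_{i}^{n}}\!\int_{Y_{i}^{n}} \frac{|f(x)-f(y)|^{2}}{d(x,y)^{\delta}}\,\dd\mu(x)\dd\mu(y),
\]
exploiting $d(x,y)^{\delta}\leq r_{n}^{\delta}\lesssim \mu(Y_{i}^{n})$ on $Y_{i}^{n}\times Y_{i}^{n}$. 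For each fixed $f\in\W$, the right-hand side tends to zero as $n\to\infty$ by dominated convergence, while remaining uniformly dominated by $2\E(f,f)$.

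The main obstacle is that, unlike the fractional analogue where the analogous bound carries a vanishing factor $r_{n}^{\alpha}$, the logarithmic singularity here leaves only a uniformly bounded constant, so one cannot directly conclude $\sup_{\|f\|_{\E}\leq 1}\|f-P_{n}f\|_{L^{2}}\to 0$. I would remedy this by choosing the partitions to be nested and analysing the associated martingale decomposition $f=P_{0}f+\sum_{k\geq 1}D_{k}f$ with $D_{k}:=P_{k}-P_{k-1}$. Building on the fourth estimate of Lemma~\ref{lem:Ahlfors_estimates}, one should obtain the sharpened spectral bound
\[\E(f,f)\gtrsim \sum_{k\geq 1}\log(r_{k}^{-1})\,\|D_{k}f\|_{L^{2}}^{2},\]
reflecting the logarithmic growth of the spectrum of $\Delta$. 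This gives $\|f-P_{n}f\|_{L^{2}}^{2}=\sum_{k>n}\|D_{k}f\|_{L^{2}}^{2}\lesssim \E(f,f)/\log(r_{n}^{-1})$, vanishing uniformly over bounded subsets of $\W$ as $n\to\infty$. A standard diagonal argument, using that each $\operatorname{ran}(P_n)$ is finite-dimensional, then produces an $L^2$-Cauchy subsequence of any bounded sequence in $\W$, proving the embedding is compact.
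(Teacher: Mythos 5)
Your reduction to compactness of the embedding $J:\W\hookrightarrow L^{2}(X,\mu)$ is exactly the paper's first step, and you correctly diagnose the central difficulty: the naive Poincar\'e bound over a partition at scale $r_n$ only controls $\|f-P_nf\|_{L^2}^2$ by a constant multiple of $\E(f,f)$, with no factor vanishing as $r_n\to 0$. The problem is with your proposed remedy. The inequality $\E(f,f)\gtrsim \sum_{k\geq 1}\log(r_k^{-1})\|D_kf\|_{L^2}^{2}$ is asserted, not proved, and it does not follow from Lemma \ref{lem:Ahlfors_estimates}(4) together with the Jensen-plus-Ahlfors computation you describe. Indeed, localising the energy to the cells of generation $k-1$ gives $\|D_kf\|_{L^2}^2\lesssim \sum_i\int_{Y_i^{k-1}}\int_{Y_i^{k-1}}|f(x)-f(y)|^2 d(x,y)^{-\delta}\dd\mu\dd\mu$, and summing this over $k$ with weights $\log(r_k^{-1})$ counts each pair $(x,y)$ roughly $(\log d(x,y)^{-1})^2$ times --- an estimate in the \emph{wrong} direction, with an extra logarithm. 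What you actually need is an almost-diagonalisation of the form $\E$ in the martingale (Haar) system: outside the ultrametric case the $D_k$ are not spectral projections of $\Delta$ (on $[-1,1]$ the eigenfunctions are Legendre polynomials, not Haar functions), so the cross terms $\E(D_kf,D_{k'}f)$ must be shown to be summably small via almost-orthogonality/Schur-type estimates across scales. That is a genuine piece of harmonic analysis, plausibly true but at least as hard as the proposition itself, so as written the argument has a gap at its crucial step.

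The paper sidesteps all of this with a softer device: for $0<r<c$ it takes the truncated kernel $t_r(x,y)=d(x,y)^{-\delta}\mathbf{1}_{\{d(x,y)\geq r\}}$, normalises it to $T_r(x,y)=t_r(x,y)/\|t_r(x,\cdot)\|_{L^1}$ using $\|t_r(x,\cdot)\|_{L^1}\simeq\log(r^{-1})$ from Lemma \ref{lem:Ahlfors_estimates}(4), and lets $\T_r$ be the associated (compact, though not finite-rank) integral operator. A single application of Cauchy--Schwarz with the splitting $T_r=T_r^{1/2}\cdot T_r^{1/2}$ then yields $\|f-\T_rf\|_{L^2}^2\leq\int\int|f(x)-f(y)|^2T_r(x,y)\dd\mu\dd\mu\lesssim\E(f,f)/\log(r^{-1})$, i.e.\ the vanishing factor you were missing is produced for free by the normalisation rather than by a spectral lower bound. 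Total boundedness of a bounded set $K\subset\W$ then follows since each $\T_r(K)$ is totally bounded and $K$ is uniformly $L^2$-close to it; no nested partitions, martingale decomposition, or finite-rank structure is needed at this stage (finite-rank refinements enter only later, in Theorem \ref{theorem:resolvent}, to get the quantitative singular-value decay). If you want to salvage your approach, you would need to either prove the almost-diagonalisation estimate in full, or replace $P_n$ by an averaging operator whose kernel carries an explicit $1/\log(r_n^{-1})$ normalisation, which is essentially what the paper does.
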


\begin{proof}
Let $K\subset \W$ be bounded and we claim that $K$ is totally bounded in $L^2(X,\mu)$. To this end, fix an arbitrary $0<r<c$ as in part (4) of Lemma \ref{lem:Ahlfors_estimates} and consider the truncated kernel $t_r:X\times X \to \mathbb R$ given by
\begin{equation*}
t_r(x,y)=
\begin{cases}
d(x,y)^{-\delta}, & \text{if}\,\, d(x,y)\geq r \\
0, & \text{otherwise}
\end{cases}.
\end{equation*}
Following again part (4) of Lemma \ref{lem:Ahlfors_estimates}, for every $x\in X$ one has that $\|t_r(x,\cdot )\|_{L^1}\simeq \log(r^{-1}).$
We now normalise $t_r$ as follows. Define the kernel 
\begin{equation*}
T_r:X\times X\to \mathbb R,\quad T_{r}(x,y):=\frac{t_r(x,y)}{\|t_r(x,\cdot )\|_{L^1}},
\end{equation*}
so that $\|T_r(x,\cdot)\|_{L^1}=1.$ Let now $\T_r:L^2(X,\mu)\to L^2(X,\mu)$ be the compact operator defined as
\begin{equation*}
\T_rf(x)=\int_X T_r(x,y)f(y)\dd \mu(y).
\end{equation*}
Since $K$ is bounded in $\W$, it is also bounded in $L^2(X,\mu)$ and thus $\T_r(K)$ is totally bounded in $L^2(X,\mu)$. Further, for every $f\in \W$ we have that
\begin{align*}
\|f-\T_rf\|_{L^2}^2&=\int_X \left| \int_X(f(x)-f(y))T_r(x,y)^{\frac{1}{2}}T_r(x,y)^{\frac{1}{2}}\dd\mu(y)\right|^2\dd\mu(x)\\
&\leq \int_X\int_X |f(x)-f(y)|^2 T_r(x,y)\dd\mu(y) \dd\mu(x) \\
&\simeq \frac{1}{\log(r^{-1})}\int_X\int_X |f(x)-f(y)|^2 t_r(x,y)\dd\mu(y) \dd\mu(x)\\
&\lesssim \frac{\E(f,f)}{\log(r^{-1})}.
\end{align*}
In particular, since $K\subset \W$ is bounded, for every $\varepsilon>0$ there is $0<r<c$ such that $\|f-\T_rf\|_{L^2}<\varepsilon$, for all $f\in K$. As each $\T_r(K)$ is totally bounded in $L^2(X,\mu)$ we conclude that $K$ is totally bounded as well.
\end{proof}

\begin{cor}
The operator $e^{-t\Ld}$ is compact for every $t>0$.
\end{cor}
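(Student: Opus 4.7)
The plan is to deduce this immediately from Proposition \ref{prop:compact_resolvent} by an elementary functional calculus argument, using that $\Delta$ is positive self-adjoint.

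First I would record the spectral consequences of compact resolvent: since $\Delta$ is positive self-adjoint with $(1+\Delta)^{-1}$ compact, standard spectral theory gives a complete orthonormal eigenbasis $\{\phi_n\}$ of $L^2(X,\mu)$ with eigenvalues $0\leq \lambda_0\leq \lambda_1\leq \cdots$ and $\lambda_n\to \infty$. Then by the spectral theorem the heat operator acts as $e^{-t\Delta}\phi_n=e^{-t\lambda_n}\phi_n$, so its singular values $e^{-t\lambda_n}$ tend to $0$, which is already enough to conclude compactness.

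Alternatively, and perhaps more cleanly without invoking the discrete spectrum explicitly, I would write
\[
e^{-t\Delta}=\bigl((1+\Delta)e^{-t\Delta}\bigr)(1+\Delta)^{-1}.
\]
The function $\lambda \mapsto (1+\lambda)e^{-t\lambda}$ is continuous and bounded on $[0,\infty)$ for every $t>0$, so by the functional calculus the first factor is a bounded operator on $L^2(X,\mu)$. The second factor $(1+\Delta)^{-1}$ is compact by Proposition \ref{prop:compact_resolvent}. Since the ideal of compact operators is closed under composition with bounded operators, the product $e^{-t\Delta}$ is compact.

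There is no real obstacle here; this is a purely formal consequence of self-adjointness, positivity, and compact resolvent, all of which have already been established. The only thing to be a little careful about is justifying the boundedness of $(1+\Delta)e^{-t\Delta}$, which is why I prefer the functional calculus formulation over manipulating unbounded operators directly.
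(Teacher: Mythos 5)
Your argument is correct and matches the paper, which states this as an immediate corollary of Proposition \ref{prop:compact_resolvent} without further proof: the factorization $e^{-t\Delta}=\bigl((1+\Delta)e^{-t\Delta}\bigr)(1+\Delta)^{-1}$ with the first factor bounded by functional calculus (since $\lambda\mapsto(1+\lambda)e^{-t\lambda}$ is bounded on $[0,\infty)$) is exactly the intended reasoning. Nothing is missing.
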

We now describe the kernel and image of $\Delta$. Since $\mu(X)<\infty$, we have that $L^{\infty}(X,\mu)\subset L^{2}(X,\mu)$ and the projection $P:L^2(X,\mu)\to L^2(X,\mu)$ onto the constant functions is given by 
\begin{equation*}
Pf=\dashint_Xf(x)\dd\mu(x).
\end{equation*} 
Thus we have a decomposition $L^2(X,\mu)=\mathbb R\cdot 1\oplus \ker P$, where $1\in L^{\infty}(X,\mu)$ denotes the constant function and $\ker P$ is the subspace of square integrable functions with zero integral. 
\begin{prop}
\label{prop:kernel}
There is an orthogonal decomposition $L^2(X,\mu)=\ker \Ld \bigoplus \Im \Ld$, where $\ker \Ld=\Im P$ and $\Im \Ld = \ker P$. 
\end{prop}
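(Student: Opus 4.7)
The plan is to first establish the orthogonal decomposition via self-adjointness of $\Delta$, then identify the kernel explicitly with the constants, and finally use the spectral gap coming from Proposition \ref{prop:compact_resolvent} to upgrade closure of the range to the desired equality $\Im \Delta = \ker P$.

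First I would identify $\ker \Delta$. Since $\Delta$ is the self-adjoint operator associated to $\mathcal E$ via Theorem \ref{thm: form-operator-equivalence}, we have $\Dom \E = \Dom \Delta^{1/2}$ and $\E(f,f)=\|\Delta^{1/2}f\|_{L^2}^{2}$. A constant function $c\in\mathbb{R}\cdot 1$ clearly lies in $\W$ and $\E(c,g)=0$ for every $g\in \W$, so $\mathbb R\cdot 1\subset \ker\Delta$. Conversely, if $\Delta f=0$, then $\E(f,f)=\|\Delta^{1/2}f\|_{L^{2}}^{2}=0$, i.e.\
\[\int_X\int_X\frac{|f(x)-f(y)|^{2}}{d(x,y)^{\delta}}\dd\mu(y)\dd\mu(x)=0,\]
which forces $f(x)=f(y)$ for $(\mu\times\mu)$-almost every $(x,y)$, hence $f$ is $\mu$-a.e.\ constant. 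This gives $\ker\Delta=\mathbb{R}\cdot 1=\Im P$.

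Next I would observe the containment $\Im\Delta\subset \ker P$. If $f=\Delta g$ with $g\in\Dom\Delta$, then using $1\in\Dom\Delta$ and $\Delta 1=0$ together with self-adjointness,
\[\mu(X)\cdot Pf=\langle f,1\rangle_{L^{2}}=\langle \Delta g,1\rangle_{L^{2}}=\langle g,\Delta 1\rangle_{L^{2}}=0,\]
so $\Im\Delta\subset \ker P$. The general orthogonal decomposition for a self-adjoint operator yields $L^{2}(X,\mu)=\ker\Delta\oplus\overline{\Im\Delta}$, with $\overline{\Im\Delta}=(\ker\Delta)^{\perp}=(\mathbb{R}\cdot 1)^{\perp}=\ker P$.

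The remaining task, which I expect to be the only non-formal point, is to prove that $\Im\Delta$ is already closed, so that $\Im\Delta=\ker P$. Here I would invoke Proposition \ref{prop:compact_resolvent}: the compactness of $(1+\Delta)^{-1}$ implies that $\Delta$ has purely discrete spectrum $0=\lambda_0<\lambda_1\leq \lambda_2\leq \cdots \to\infty$, with $\lambda_0=0$ of multiplicity one by the kernel computation above. Therefore $\Delta$ restricted to the invariant subspace $\ker P$ is bounded below by $\lambda_1>0$, which makes it a bijection onto its range with bounded inverse on $\ker P$. In particular $\Delta(\Dom\Delta\cap \ker P)=\ker P$, and combined with $\Im\Delta\subset \ker P$ this yields $\Im\Delta=\ker P$. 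Assembling the three pieces gives the orthogonal decomposition $L^{2}(X,\mu)=\ker\Delta\oplus\Im\Delta$ with the claimed identifications.
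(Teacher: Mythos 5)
Your proof is correct and follows essentially the same route as the paper: identify $\ker\Delta$ with the constants via $\mathcal{E}(f,f)=\|\Delta^{1/2}f\|_{L^2}^2=0$, use self-adjointness for the decomposition $L^2=\ker\Delta\oplus\overline{\Im\Delta}$, and invoke the compact resolvent from Proposition \ref{prop:compact_resolvent} to conclude that $0$ is isolated in the spectrum, so the range is closed. The extra explicit verification that $\Im\Delta\subset\ker P$ is harmless but redundant given $\overline{\Im\Delta}=(\ker\Delta)^{\perp}$.
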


\begin{proof}
We have that $L^2(X,\mu)=\ker \Ld \bigoplus \overline{\Im \Ld}$ since $\Ld$ is self-adjoint. By Proposition \ref{prop:compact_resolvent}, we obtain that $0$ is an isolated point of the spectrum of $\Delta$, so $\Im\Delta$ is in fact closed. For the kernel, note that for a self-adjoint operator $D$ we have $\ker D=\ker D^{2}$ and in particular $\ker \Delta^{1/2}=\ker\Delta$. The statement now follows since $f\in\ker \Delta^{1/2}$ if and only if $\mathcal{E}(f,f)=0$ if and only if $f$ is constant $\mu$-almost everywhere.
\end{proof}

\subsection{Eigenvalue asymptotics}
We now recall some facts about singular values of compact operators on separable Hilbert spaces from \cite[Chapter II]{GK}. The singular values of a compact operator $A:H_1\to H_2$ are the eigenvalues of $|A|=(A^*A)^{1/2}$. Let $(s_n(A))_{n\in \mathbb N}$ denote the sequence of its singular values in decreasing order, counting multiplicities. A useful fact is that for every $n\in \mathbb N$ it holds that
\begin{equation}\label{eq:singular_values_1}
s_n(A)=\inf_{F\in \R_{n-1}}\|A-F\|,
\end{equation}
where $\R_{n-1}$ is the set of operators from $H_1$ to $H_2$ of rank at most $n-1$ and $\|\cdot \|$ is the operator norm. Further, for every bounded operator $B:H_0\to H_1$ one has 
\begin{equation}\label{eq:singular_values_2}
s_n(AB)\leq s_n(A)\|B\|.
\end{equation}
Using the notion of singular values one can define several ideals of compact operators. In this paper, we are interested in the \textit{logarithmic integral ideal} which is the subset of the compact operators $\mathbb{K}(L^2(X,\mu))$ given by
\begin{equation*}\label{eq:singular_values_3}
\Li(L^2(X,\mu))=\{T\in \mathbb{K}(L^2(X,\mu)): s_n(T)= O((\log n)^{-1})\}
\end{equation*}
as well as the set
\begin{equation*}\label{eq:singular_values_4}
\Li^{\frac{1}{2}}(L^2(X,\mu))=\{T\in \mathbb{K}(L^2(X,\mu)): s_n(T)= O((\log n)^{-\frac{1}{2}})\}.
\end{equation*}
Both are two-sided ideals of the bounded operators on $L^2(X,\mu)$ and Banach spaces with norms described in \cite[p. 401]{Connes}. The latter ideal is called the \textit{square root} of the former as $T\in \Li^{\frac{1}{2}}(L^2(X,\mu))$ if and only if $|T|^2\in \Li(L^2(X,\mu))$. 

We will prove that $\Delta$ has resolvent in $\Li(L^2(X,\mu))$. For this we require the following geometric observation that holds more generally for doubling spaces.

\begin{lemma}[{\cite[Lemma 2.3]{Hy}}]
\label{lem: cover-upper-bound}
There exists $N\in\mathbb{N}$ such that for all $n\in\mathbb{N}$ we can cover $X$ by at most $N^{n}$ balls of radius $e^{-n}$.
\end{lemma}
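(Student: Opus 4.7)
The plan is to deduce the statement from the metric doubling property, which is a consequence of Ahlfors $\delta$-regularity, by iterating a one-step covering lemma and absorbing the diameter of $X$ into the base of the exponential.

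First, I would prove the auxiliary $e$-doubling statement: there is a constant $K=K(\delta,C)$ such that every ball $\cB(x,r)\subset X$ with $r\leq\diam(X)$ admits a cover by at most $K$ balls of radius $r/e$ with centres in $\cB(x,r)$. Let $\{x_1,\dots,x_M\}$ be a maximal $(r/2e)$-separated subset of $\cB(x,r)$. Maximality guarantees the covering $\cB(x,r)\subset\bigcup_{i=1}^{M}\cB(x_i,r/e)$, while the separation makes the balls $\cB(x_i,r/4e)$ pairwise disjoint and contained in $\cB(x,2r)$. Ahlfors regularity then forces
\[
M\cdot C^{-1}(r/4e)^{\delta}\;\leq\;\sum_{i=1}^{M}\mu(\cB(x_i,r/4e))\;\leq\;\mu(\cB(x,2r))\;\leq\;C(2r)^{\delta},
\]
whence $M\leq K:=C^{2}(8e)^{\delta}$, independently of $x$ and $r$. (When $2r\geq\diam(X)$ one replaces the last inequality by the crude bound $\mu(\cB(x,2r))\leq\mu(X)$, changing only the numerical constant.)

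Next, I would iterate the one-step lemma. Since $X$ is totally bounded, $X\subset\cB(x_0,\diam(X))$ for any fixed $x_0\in X$, so applying the auxiliary lemma $m$ times gives a cover of $X$ by at most $K^{m}$ balls of radius $\diam(X)\cdot e^{-m}$. Choosing $m=n+\lceil\log^{+}\!\diam(X)\rceil$ produces a cover by balls of radius at most $e^{-n}$ of cardinality at most $K^{\,n+\lceil\log^{+}\!\diam(X)\rceil}$. Taking any integer $N$ with $N\geq K^{1+\lceil\log^{+}\!\diam(X)\rceil}$ yields $N^{n}\geq K^{\,n+\lceil\log^{+}\!\diam(X)\rceil}$ for every $n\geq 1$, proving the claim.

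The only real obstacle is the volume-packing count in the auxiliary lemma, which is standard but requires the small care indicated above in the regime $r\gtrsim\diam(X)$; the remaining passage from the $e$-doubling lemma to the clean estimate $N^{n}$ is pure bookkeeping.
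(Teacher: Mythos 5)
Your argument is correct and is essentially the standard proof of the cited result: the paper gives no proof of its own, deferring to Hyt\"onen's Lemma 2.3, whose content is precisely the volume-packing/doubling argument you derive from Ahlfors regularity and then iterate. The only cosmetic point is that with separation exactly $r/(2e)$ the closed balls $\cB(x_i,r/(4e))$ need not be strictly disjoint --- they can meet along a sphere, which may carry positive measure in a general Ahlfors regular space --- so one should shrink the packing radius to, say, $r/(5e)$, which changes only the constant $K$.
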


We now prove the main theorem regarding the resolvent of $\Ld$.

\begin{thm}\label{theorem:resolvent}
The logarithmic Dirichlet Laplacian has resolvent in $\Li(L^2(X,\mu))$.
\end{thm}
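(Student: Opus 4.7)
The goal is to show that the singular values $s_n((1+\Delta)^{-1})$ decay like $(\log n)^{-1}$. Since $\Delta$ is positive self-adjoint with compact resolvent by Proposition~\ref{prop:compact_resolvent}, writing its eigenvalues in increasing order $0 = \lambda_1 \leq \lambda_2 \leq \cdots$, one has $s_n((1+\Delta)^{-1}) = (1+\lambda_n)^{-1}$, so the task reduces to a Weyl-type lower bound $\lambda_n \gtrsim \log n$. I would obtain this via a Hilbert--Schmidt counting argument applied to the averaging operator $\T_r$ introduced in the proof of Proposition~\ref{prop:compact_resolvent}.

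The first ingredient is the inequality already established there: for $0 < r < c$ and all $f \in \W$,
\[ \|(I - \T_r) f\|_{L^2}^2 \leq \frac{C_1\,\E(f,f)}{\log(r^{-1})}. \]
Specialising to an $L^2$-normalised eigenfunction $\phi_n$, for which $\E(\phi_n,\phi_n) = \lambda_n$, the reverse triangle inequality gives $\|\T_r\phi_n\|_{L^2} \geq 1 - (C_1\lambda_n/\log(r^{-1}))^{1/2}$. In particular, whenever $\lambda_n \leq \log(r^{-1})/(4C_1)$ we have $\|\T_r\phi_n\|_{L^2}^2 \geq 1/4$; each ``low-lying'' eigenvector thus has substantial image under $\T_r$.

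The second ingredient is a direct Hilbert--Schmidt bound on $\T_r$. Using $T_r(x,y) = t_r(x,y)/\|t_r(x,\cdot)\|_{L^1}$, the uniform-in-$x$ estimate $\|t_r(x,\cdot)\|_{L^1} \simeq \log(r^{-1})$ from Lemma~\ref{lem:Ahlfors_estimates}(4), and the bound $\int_X t_r(x,y)^2 \dd\mu(y) \lesssim r^{-\delta}$ uniformly in $x$ from Lemma~\ref{lem:Ahlfors_estimates}(2) with $s = \delta$, integration in $x$ yields
\[ \|\T_r\|_{\textup{HS}}^2 = \int_X \int_X T_r(x,y)^2 \, \dd\mu(y)\, \dd\mu(x) \lesssim \frac{r^{-\delta}}{(\log(r^{-1}))^2}. \]

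Combining both ingredients, set $N(r) := \#\{n : \lambda_n \leq \log(r^{-1})/(4C_1)\}$ and expand the Hilbert--Schmidt norm in the eigenbasis $\{\phi_n\}$ via Parseval:
\[ \|\T_r\|_{\textup{HS}}^2 = \sum_n \|\T_r \phi_n\|_{L^2}^2 \geq \frac{N(r)}{4}, \]
whence $N(r) \lesssim r^{-\delta}/(\log(r^{-1}))^2$. Taking $r = e^{-k}$ for $k \in \mathbb{N}$ and writing $L := k/(4C_1)$, this reads $\#\{n : \lambda_n \leq L\} \lesssim e^{4C_1\delta L}/L^2$, and inverting (with $k \simeq \delta^{-1}\log n$) gives $\lambda_n \gtrsim \log n$, whence $s_n((1+\Delta)^{-1}) \lesssim (\log n)^{-1}$ and $(1+\Delta)^{-1} \in \Li(L^2(X,\mu))$. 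The main obstacle is the Hilbert--Schmidt estimate: the factor $(\log(r^{-1}))^{-2}$ there is what upgrades the polynomial-in-$r^{-1}$ eigenvalue count into logarithmic spectral growth, and it comes from squaring the normalising denominator of $T_r$, whose size is supplied by Lemma~\ref{lem:Ahlfors_estimates}(4). Alternatively, Lemma~\ref{lem: cover-upper-bound} affords a finite-rank approximation route: one may replace $\T_{e^{-k}}$ by an explicit rank-$N^k$ step-function projection built on a partition at scale $e^{-k}$ and argue via the characterisation (\ref{eq:singular_values_1}) of singular values, but the spectral/Hilbert--Schmidt path above is the cleanest.
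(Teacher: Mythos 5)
Your argument is correct, and it reaches the conclusion by a genuinely different route than the paper. The paper's proof stays with the embedding $J:\W\hookrightarrow L^{2}(X,\mu)$: it builds explicit finite-rank approximants $Q_{n}=P_{n}\T_{e^{-n}}$ of rank at most $N^{n}$, where $P_{n}$ averages over a partition of unity subordinate to a cover by $N^{n}$ balls of radius $e^{-n}$ supplied by the doubling estimate of Lemma \ref{lem: cover-upper-bound}, shows $\|J-Q_{n}J\|\lesssim n^{-1/2}$, and reads off $s_{N^{n}}(J)\lesssim n^{-1/2}$ from the characterisation \eqref{eq:singular_values_1}; it then factors $(1+\Delta^{1/2})^{-1}$ through $J$ to land in $\Li^{\frac{1}{2}}(L^{2}(X,\mu))$ and squares. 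You instead count eigenvalues: the approximation inequality $\|(I-\T_{r})f\|_{L^{2}}^{2}\lesssim \E(f,f)/\log(r^{-1})$ from Proposition \ref{prop:compact_resolvent} forces every eigenfunction with $\lambda_{n}\leq \log(r^{-1})/(4C_{1})$ to satisfy $\|\T_{r}\phi_{n}\|_{L^{2}}^{2}\geq 1/4$, and the Hilbert--Schmidt bound $\|\T_{r}\|_{\textup{HS}}^{2}\lesssim r^{-\delta}/(\log(r^{-1}))^{2}$ (valid by Lemma \ref{lem:Ahlfors_estimates}(2) with $s=\delta$, Lemma \ref{lem:Ahlfors_estimates}(4), and $\mu(X)<\infty$) caps the number of such eigenfunctions via Parseval. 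This yields $\#\{n:\lambda_{n}\leq L\}\lesssim e^{4C_{1}\delta L}$ and hence $\lambda_{n}\gtrsim\log n$, which is exactly the Weyl-type lower bound needed. Your route is more self-contained --- it needs neither the covering lemma, nor partitions of unity, nor the ideal $\Li^{\frac{1}{2}}$ and the square-root trick --- while the paper's route additionally produces approximation-number asymptotics for the Sobolev embedding $J$ itself, which is of independent interest. One small remark on attribution: the factor $(\log(r^{-1}))^{-2}$ in your Hilbert--Schmidt estimate is not what produces the logarithmic spectral growth --- even the cruder bound $N(r)\lesssim r^{-\delta}$ suffices, since the logarithm really enters through the eigenvalue threshold $L\simeq\log(r^{-1})$ coming from the approximation inequality. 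But this affects only the commentary, not the validity of the proof.
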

\begin{proof}
Let us denote the embedding $\W\hookrightarrow L^2(X,\mu)$ by $J$. We first prove that 
\begin{equation}\label{eq:singular_values_5}
s_n(J)=O((\log n)^{-\frac{1}{2}})
\end{equation}
by approximating $J$ with finite rank operators. The proof consists of a refinement of the approximation argument in Proposition \ref{prop:compact_resolvent}. For $0<r<c$, recall the kernel $t_r:X\times X\to \mathbb R$, its normalisation $T_r:X\times X\to \mathbb R$ and the integral operator $\T_r:L^2(X,\mu)\to L^2(X,\mu)$ with kernel $T_r$.

By Lemma \ref{lem: cover-upper-bound} we can find a sequence $\mathcal{B}_n$ of covers of $X$ by open balls of radius $e^{-n}$ such that $|\mathcal{B}_n|\leq N^n$. Consider a subordinated continuous partition of unity $\{\chi_B\}_{B\in \mathcal{B}_{n}}$. Then, we can define the bounded operator $P_n:L^2(X,\mu)\to L^2(X,\mu)$ given by
\begin{equation*}
P_nf=\sum_{B\in \mathcal{B}_n} \left(\dashint_B f(z) \dd\mu (z)\right) \chi_B.
\end{equation*} 
Clearly, $P_n$ is finite rank of rank at most $|\mathcal{B}_{n}|\leq N^{n}$. Consider now the integral operator $Q_n=P_n\T_{e^{-n}}$ for large enough $n\in \mathbb N$ so that $e^{-n}<c$, which is also of rank at most $N^n$. Its kernel $q_n:X\times X\to \mathbb R$ is given by
\begin{equation*}
q_n(x,y)=\sum_{B\in \mathcal{B}_n}\left( \dashint_{B} T_{e^{-n}}(z,y) \dd\mu (z) \right) \chi_B(x).
\end{equation*}
Since for every $x\in X$ we have $\|T_{e^{-n}}(x,\cdot)\|_{L^1}=1$, we also obtain that $\|q_n(x,\cdot)\|_{L^1}=1$. Moreover, from the definition of the kernel $T_{e^{-n}}$ as the normalisation of $t_{e^{-n}}$ and the fact that $\|t_{e^{-n}}(x,\cdot)\|_{L^1}\simeq n$ we obtain that 
\begin{equation}\label{eq:singular_values_9}
q_n(x,y)\lesssim \frac{1}{n} \sum_{B\in \mathcal{B}_n}\left( \dashint_{B} t_{e^{-n}}(z,y) \dd\mu (z) \right) \chi_B(x).
\end{equation}

Now observe that for any $B\in \mathcal{B}_n$ that contains $x$ but not necessarily $y$ we have
\begin{equation}\label{eq:singular_values_10}
\dashint_{B} t_{e^{-n}}(z,y) \dd\mu (z)\lesssim d(x,y)^{-\delta}.
\end{equation}
This follows from the fact that $t_{e^{-n}}(z,y)=d(z,y)^{-\delta}$, if $d(z,y)\geq e^{-n}$, in which case $$\left(\frac{d(x,y)}{d(z,y)}\right)^{\delta}\leq \left(\frac{d(x,z)+d(z,y)}{d(z,y)}\right)^{\delta}<(2e^{-n}e^n+1)^{\delta}=3^{\delta},$$ since $x,z\in B$, while if $d(z,y)< e^{-n}$ then $t_{e^{-n}}(z,y)=0$. Inequalities \eqref{eq:singular_values_9} and \eqref{eq:singular_values_10} then yield that for $x,y\in X$ it holds $q_n(x,y)\lesssim d(x,y)^{-\delta} n^{-1}$. Now, working as in the proof of Proposition \ref{prop:compact_resolvent}, for every $f\in \W$ we obtain
\begin{align*}
\|f-Q_nf\|_{L^2}^2&\leq \int_X\int_X |f(x)-f(y)|^2 q_n(x,y)\dd\mu(y) \dd\mu(x)\\
&\lesssim \frac{\E(f,f)}{n}.
\end{align*}
This fact together with the finite rank description \eqref{eq:singular_values_1} of the singular values give that 
\begin{equation*}
s_{N^n}(J)\leq \|J-Q_nJ\| \lesssim n^{-\frac{1}{2}},
\end{equation*}
which leads to the desired equality \eqref{eq:singular_values_5} for the singular values of $J$. 

Now, since $(1+\Ld^{1/2})^{-1}:L^2(X,\mu)\to L^2(X,\mu)$ is the composition of a bounded operator from $L^2(X,\mu)\to \W$ with $J$, from inequality \eqref{eq:singular_values_2} we obtain that 
\begin{equation*}
(1+\Ld^{1/2})^{-1}\in \Li^{\frac{1}{2}}(L^2(X,\mu)).
\end{equation*}
Finally, observe that $(1+\Ld)^{-1}=(1+\Ld^{1/2})^{-2}(1+\Ld^{1/2})^{2}(1+\Ld)^{-1}$ where $(1+\Ld^{1/2})^{2}(1+\Ld)^{-1}$ is bounded. Since $(1+\Ld^{1/2})^{-1}$ is positive then $(1+\Ld^{1/2})^{-2}\in \Li(L^2(X,\mu))$. Therefore, we obtain that $(1+\Ld)^{-1}\in \Li(L^2(X,\mu))$, thus completing the proof.
\end{proof}

A direct computation then yields the following result.

\begin{cor}\label{cor:cutoff}
There is some $t_0>0$ such that $e^{-t\Ld}$ is trace class for every $t>t_0$.
\end{cor}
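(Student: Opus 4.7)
The plan is to translate the logarithmic decay of the singular values of $(1+\Delta)^{-1}$ obtained in Theorem \ref{theorem:resolvent} into a logarithmic lower bound for the eigenvalues of $\Delta$, and then verify that the resulting series $\sum_n e^{-t\lambda_n}$ is summable once $t$ exceeds an explicit threshold.

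First, since $\Delta$ is positive self-adjoint with compact resolvent by Proposition \ref{prop:compact_resolvent}, it has purely discrete spectrum; let $0=\lambda_0\leq \lambda_1\leq \lambda_2\leq \cdots$ denote its eigenvalues, listed with multiplicity in increasing order. Under this ordering one has $s_{n}((1+\Delta)^{-1}) = (1+\lambda_{n-1})^{-1}$ for every $n\in\mathbb{N}$. By Theorem \ref{theorem:resolvent}, $(1+\Delta)^{-1}\in\Li(L^{2}(X,\mu))$, so there exist constants $C>0$ and $n_{0}\in\mathbb{N}$ such that
\[
(1+\lambda_{n-1})^{-1}\leq \frac{C}{\log n}, \qquad n\geq n_{0}.
\]
Equivalently, $\lambda_{n-1}\geq C^{-1}\log n - 1$ for $n\geq n_{0}$.

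Next I would write out the trace of the heat semigroup as
\[
\Tr(e^{-t\Delta}) = \sum_{n=0}^{\infty} e^{-t\lambda_{n}},
\]
split off the finitely many terms with $n<n_{0}$, and bound the tail using the eigenvalue estimate by
\[
\sum_{n\geq n_{0}} e^{-t\lambda_{n-1}} \leq e^{t}\sum_{n\geq n_{0}} n^{-t/C}.
\]
This series converges precisely when $t/C>1$, so setting $t_{0}:=C$ one obtains that $e^{-t\Delta}$ is trace class for all $t>t_{0}$.

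There is essentially no analytic obstacle here, since all the work is already contained in Theorem \ref{theorem:resolvent}; the only mildly delicate point is keeping the indexing of singular values versus eigenvalues consistent (singular values start at $n=1$, eigenvalues at $n=0$), but this shift only affects constants and not the summability threshold. One could alternatively observe that $e^{-t\Delta} = \bigl((1+\Delta)^{-1}\bigr)^{k}\cdot (1+\Delta)^{k}e^{-t\Delta}$ for any integer $k$, where the second factor is bounded; since $(1+\Delta)^{-1}\in\Li(L^{2}(X,\mu))$ implies $(1+\Delta)^{-k}$ has singular values $O((\log n)^{-k})$, choosing $k$ and $t$ appropriately again forces summability, giving an alternative route to the same conclusion.
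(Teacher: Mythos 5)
Your main argument is correct and is precisely the ``direct computation'' the paper has in mind after Theorem \ref{theorem:resolvent}: since $(1+\Delta)^{-1}\in\Li(L^2(X,\mu))$ you get $\lambda_{n-1}\geq C^{-1}\log n-1$ for large $n$, hence $\sum_n e^{-t\lambda_n}\lesssim e^{t}\sum_n n^{-t/C}<\infty$ for $t>C$, and for the positive operator $e^{-t\Delta}$ this is exactly the trace-class condition. The indexing shift you flag is indeed harmless.

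One caveat: the ``alternative route'' in your last sentence does not work. Writing $e^{-t\Delta}=(1+\Delta)^{-k}\cdot(1+\Delta)^{k}e^{-t\Delta}$ with the second factor bounded only yields $s_n(e^{-t\Delta})=O((\log n)^{-k})$, and $\sum_n(\log n)^{-k}$ diverges for \emph{every} $k$, so no choice of $k$ forces summability. The essential point of the corollary is that the exponential function converts the logarithmic eigenvalue growth into polynomial decay $n^{-t/C}$; any argument that only uses powers of the resolvent and the operator norm of the remaining factor cannot see this. Drop that remark and keep the first argument.
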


\section{Analysis of the domain}
\label{sec:domain}
Typically, it is quite challenging to describe the domain of the generator of a Dirichlet form such as $\Ld$. Nevertheless, in this section we manage to obtain interesting information about its structure.
\subsection{Commutators and form domains}
We prove some general facts about a positive self-adjoint operator $\A$ on a Hilbert space $H$, that is associated to a densely defined closed bilinear form $\Q$ as in Subsection \ref{sec:Dir}. First, using standard methods, we observe that $\Dom \A$ can be described in terms of the form $\Q$ as follows.

\begin{lemma}\label{lem:Domain_in_terms_of_E}
The subspace $\Dom \A \subset \Dom \Q$ consists of all $f\in \Dom \Q$ for which there is $C_f>0$ such that, for all $g\in \Dom \Q$ it holds $$|\langle f,g\rangle_{\Q}|\leq C_f \|g\|_{H}.$$ 
\end{lemma}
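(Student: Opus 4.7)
The plan is to characterize $\Dom \A$ via the defining identity $R_{\Q}=(1+\A)^{-1}$ from Theorem \ref{thm: form-operator-equivalence}, which should make both directions essentially immediate once the right identification is made. The key observation is that the combined inner product $\langle \cdot,\cdot\rangle_{\Q}$ on $\Dom \Q$ is related to $R_{\Q}$ (and hence to $\A$) by the functional equation $\langle R_{\Q}h,g\rangle_{\Q}=\langle h,g\rangle_{H}$.

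For the forward direction, I would take $f\in\Dom \A$ and write $f=R_{\Q}h$ with $h=(1+\A)f\in H$. The defining property of $R_{\Q}$ then yields
\[
\langle f,g\rangle_{\Q}=\langle R_{\Q}h,g\rangle_{\Q}=\langle h,g\rangle_{H}=\langle (1+\A)f,g\rangle_{H},
\]
for every $g\in\Dom \Q$, so the Cauchy--Schwarz inequality in $H$ gives the bound with $C_{f}=\|(1+\A)f\|_{H}$.

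For the converse, suppose $f\in\Dom \Q$ satisfies $|\langle f,g\rangle_{\Q}|\leq C_{f}\|g\|_{L^{2}}$ for all $g\in\Dom \Q$. Then $g\mapsto \langle f,g\rangle_{\Q}$ is a continuous linear functional on $\Dom \Q$ with respect to the ambient norm $\|\cdot\|_{H}$; since $\Dom \Q\subset H$ is dense, it extends uniquely to a bounded functional on $H$, and the Riesz representation theorem produces $h\in H$ with $\langle f,g\rangle_{\Q}=\langle h,g\rangle_{H}$ for all $g\in\Dom \Q$. Using $\langle R_{\Q}h,g\rangle_{\Q}=\langle h,g\rangle_{H}$, I get $\langle f-R_{\Q}h,g\rangle_{\Q}=0$ for all $g\in\Dom \Q$, so $f=R_{\Q}h\in\Dom \A$.

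There should be no real obstacle here beyond bookkeeping; the only subtlety is making sure that the bounded extension step genuinely uses the density of $\Dom \Q$ in $H$ (which comes from $\Q$ being densely defined) rather than density in $\Dom \Q$ itself, and recognizing that the bound is with respect to the $H$-norm rather than the graph norm.
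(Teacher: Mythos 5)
Your proposal is correct and follows essentially the same route as the paper: the forward direction via the identity $\langle f,g\rangle_{\Q}=\langle (1+\A)f,g\rangle_{H}$ together with Cauchy--Schwarz, and the converse via density of $\Dom\Q$ in $H$, extension of the functional, the Riesz representation theorem, and identification of $f$ with $R_{\Q}h=(1+\A)^{-1}h$. The only cosmetic difference is that you phrase both directions explicitly through the defining property of $R_{\Q}$, whereas the paper writes the same identities directly.
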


\begin{proof}
Let $f\in \Dom \A$ and $g\in \Dom \Q$. Then, we can choose $C_f=\|(1+\A)f\|_{H}$ since $$|\langle f,g \rangle_{\Q}|=|\langle (1+\A)f,g\rangle_{H}|\leq \|(1+\A)f\|_{H}\|g\|_{H}.$$ Conversely, consider $f\in \Dom \Q$ with $C_f>0$ such that, for all $g\in \Dom \Q$, it holds $$|\langle f,g\rangle_{\Q}|\leq C_f \|g\|_{H}.$$ Since $\Dom \Q$ is dense in $H$, the functional on $\Dom \Q$ given by $g\mapsto \langle f,g\rangle_{\Q}$ extends to a bounded functional $\phi$ on $H$. The Riesz Representation Theorem then gives $f'\in H$ such that $\phi(g)=\langle f',g \rangle_{H}$, for all $g\in H$. In particular, for every $g\in \Dom \Q$ we have $$\langle f,g \rangle_{\Q}= \langle f',g \rangle_{H} = \langle (1+\A)^{-1} f', g\rangle_{\Q},$$ as the image of the bounded operator $(1+\A)^{-1}:H \to H$ is $\Dom \A.$ We conclude that $f=(1+\A)^{-1} f'$ and thus $f\in \Dom \A$. 
\end{proof}
\begin{prop} 
\label{prop: domain-commutator-form}
Suppose that $T\in \mathbb{B}(H)$ is such that $T,T^{*}:\Dom \Q \to\Dom \Q$.  
\begin{enumerate}
\item If for every $f\in \Dom \A$ there is a constant $C_{f,T}>0$ such that for every $g\in \Dom \Q$, $$|\Q(Tf,g)-\Q(f,T^{*}g)|\leq C_{f,T}\|g\|_{H},$$ then $T:\Dom\A \to \Dom \A$. 
\item If in addition each $C_{f,T}=C_{T}\|f\|_{H}$, with $C_{T}$ independent of $f$, then the commutator $[\A,T]:\Dom \A \to H$ extends to a bounded operator on $H$.
\end{enumerate}
\end{prop}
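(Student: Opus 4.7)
The plan is to reduce both claims to testing the relevant inner products against form-domain vectors and then invoke Lemma \ref{lem:Domain_in_terms_of_E}. For part (1), I would fix $f\in\Dom\A$ and, for arbitrary $g\in\Dom\Q$, decompose
\[
\langle Tf,g\rangle_{\Q}=\langle Tf,g\rangle_{H}+\Q(Tf,g).
\]
The first term is trivially bounded by $\|T\|\|f\|_{H}\|g\|_{H}$. For the second I would insert the hypothesis, writing $\Q(Tf,g)=\Q(f,T^{*}g)+(\Q(Tf,g)-\Q(f,T^{*}g))$, where the difference is controlled by $C_{T,f}\|g\|_{H}$. What remains is to control $\Q(f,T^{*}g)$: because $f\in\Dom\A$, Lemma \ref{lem:Domain_in_terms_of_E} applied with test vector $T^{*}g\in\Dom\Q$ gives $|\langle f,T^{*}g\rangle_{\Q}|\leq\|(1+\A)f\|_{H}\|T^{*}\|\|g\|_{H}$, and then $\Q(f,T^{*}g)=\langle f,T^{*}g\rangle_{\Q}-\langle f,T^{*}g\rangle_{H}$ is bounded in the same way. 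Collecting these estimates yields a constant (depending on $f$ and $T$ but not on $g$) with $|\langle Tf,g\rangle_{\Q}|\leq C\|g\|_{H}$, so Lemma \ref{lem:Domain_in_terms_of_E} produces $Tf\in\Dom\A$.

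For part (2), the idea is to identify $[\A,T]f$ as the functional on $H$ represented by the form difference appearing in the hypothesis. For $f\in\Dom\A$ (so $Tf\in\Dom\A$ by part (1)) and $g\in\Dom\Q$, I would compute
\[
\langle [\A,T]f,g\rangle_{H}=\langle \A Tf,g\rangle_{H}-\langle T\A f,g\rangle_{H}=\Q(Tf,g)-\Q(f,T^{*}g),
\]
using the identity $\langle \A h,k\rangle_{H}=\Q(h,k)$, valid whenever $h\in\Dom\A$ and $k\in\Dom\Q$, applied to the pairs $(Tf,g)$ and $(f,T^{*}g)$. Under the stronger hypothesis $C_{T,f}=C_{T}\|f\|_{H}$, this bounds $|\langle [\A,T]f,g\rangle_{H}|$ by $C_{T}\|f\|_{H}\|g\|_{H}$, and density of $\Dom\Q$ in $H$ yields $\|[\A,T]f\|_{H}\leq C_{T}\|f\|_{H}$; thus $[\A,T]$ extends by density from $\Dom\A$ to a bounded operator on $H$.

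The main obstacle, more bookkeeping than conceptual, is keeping track of which vectors lie in which domain so that the identity $\Q(h,k)=\langle \A h,k\rangle_{H}$ is legitimately applied: part (1) must be invoked to place $Tf$ in $\Dom\A$ before one can rewrite $\langle \A Tf,g\rangle_{H}$ as $\Q(Tf,g)$, and the second pairing $\Q(f,T^{*}g)$ requires the standing hypothesis $T^{*}:\Dom\Q\to\Dom\Q$. Once these domain conditions are aligned, both claims fall out of the same form-theoretic computation.
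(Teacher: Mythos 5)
Your argument is correct and follows essentially the same route as the paper: part (1) reduces to the bound $|\langle Tf,g\rangle_{\Q}|\leq(\|(1+\A)f\|_{H}\|T\|+C_{f,T})\|g\|_{H}$ via Lemma \ref{lem:Domain_in_terms_of_E}, and part (2) identifies $\langle[\A,T]f,g\rangle_{H}$ with $\Q(Tf,g)-\Q(f,T^{*}g)$ and uses density. The only difference is cosmetic bookkeeping in part (1), where you split off the $H$-inner product before applying the lemma rather than estimating $\langle f,T^{*}g\rangle_{\Q}$ in one step.
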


\begin{proof}
For part (1), given $f\in \Dom \A$ the assumption implies that $$|\langle Tf,g\rangle_{\Q}|\leq (\|(1+\A)f\|_{H}\|T\|+C_{f,T})\|g\|_{H}.$$ Therefore, $Tf\in\Dom\A$ by Lemma \ref{lem:Domain_in_terms_of_E}.

For part (2), we know by (1) that the commutator $[\A,T]:\Dom\A\to H$ is well-defined. For $f\in\Dom\A$ and $g\in \Dom \Q$ we have
\begin{align*}
\langle [\A,T]f,g\rangle_{H} &=\langle \A Tf, g\rangle_{H}-\langle \A f,T^{*}g\rangle_{H}=\Q(Tf,g)-\Q(f,T^{*}g),
\end{align*}
so that 
\begin{align*}
|\langle [\A,T]f,g\rangle_{H}|=|\Q(Tf,g)-\Q(f,T^{*}g)|\leq C_{T}\|f\|_{H}\|g\|_{H}.
\end{align*}
Thus $[\A,T]$ extends to a bounded operator.
\end{proof}

\subsection{The Banach algebra of Dini continuous functions}
The logarithmic Dirichlet Laplacian $\Delta$ and the so-called Dini continuous functions on $(X,d)$ are closely related. Before making this precise, we remind the reader of some basic properties of Dini functions and prove that they form a Banach algebra. The latter might be a known fact, but we were not able to find a reference in the literature. Denote by $\Cucb(X)$ the space of uniformly continuous bounded real-valued functions on $X$ equipped with the supremum norm $\|\cdot \|_{\infty}$.
 
\begin{definition} For $f\in \Cucb(X)$, the \emph{modulus of continuity} $\omega_f:[0,1]\to [0,\infty)$ is given by $\omega_f(t)=\sup\{|f(x)-f(y)|: d(x,y)\leq t\diam(X)\}$. Then, we say that $f$ is \textit{Dini continuous} if its modulus of continuity satisfies 
\begin{equation*}
\D(f):=\int_{0}^{1}\frac{\omega_f(t)}{t}\dd t<\infty.
\end{equation*}
We call $\D(f)$ the \emph{Dini constant} of $f$ and denote by $\D(X,d)$ the set of all Dini continuous functions. The \emph{Dini norm} of $f\in\D(X,d)$ is $\|f\|_{\D}:=\|f\|_{\infty} + \D(f).$
\end{definition}

We recall some well-known facts about Dini continuous functions.

\begin{lemma} 
\label{lem: Dini-properties}
A function $f\in \Cucb(X)$ is Dini continuous if and only if for all $\theta\in (0,1)$ the sequence,
$$\omega^{\theta}_{f}:\mathbb{N}\cup\{0\}\to \mathbb{R}, \quad n\mapsto \omega_{f}(\theta^{n}),$$
is in $\ell^{1}(\mathbb{N}\cup\{0\})$. In fact, $\D(f) \lesssim\|\omega^{\theta}_{f}\|_{\ell^1}\lesssim \|f\|_{\D}$ and for $p>1$ we have $\|\omega^{\theta}_{f}\|_{\ell^p}\lesssim \|f\|_{\D}$.
\end{lemma}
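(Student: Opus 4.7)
The proof rests on exploiting monotonicity of the modulus of continuity $\omega_{f}$ together with a geometric partition of $(0,1]$ adapted to the parameter $\theta$. Since $\omega_{f}$ is non-decreasing and bounded (observe $\omega_{f}(1)\leq 2\|f\|_{\infty}$ since any two points $x,y\in X$ satisfy $d(x,y)\leq \diam(X)$), the integral defining $\D(f)$ can be squeezed between two comparable series.

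Fix $\theta\in (0,1)$ and decompose $(0,1]=\bigsqcup_{n\geq 0}(\theta^{n+1},\theta^{n}]$. On each interval $(\theta^{n+1},\theta^{n}]$ monotonicity yields
\[\omega_{f}(\theta^{n+1})\log(1/\theta)\leq \int_{\theta^{n+1}}^{\theta^{n}}\frac{\omega_{f}(t)}{t}\dd t\leq \omega_{f}(\theta^{n})\log(1/\theta).\]
Summing over $n\geq 0$ then produces the two-sided estimate
\[\log(1/\theta)\sum_{n=1}^{\infty}\omega_{f}(\theta^{n})\leq \D(f)\leq \log(1/\theta)\sum_{n=0}^{\infty}\omega_{f}(\theta^{n}).\]
The upper bound immediately gives $\D(f)\lesssim \|\omega_{f}^{\theta}\|_{\ell^{1}}$, which proves the backward implication and one half of the norm comparison. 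For the forward implication and the other half, I use the lower bound together with the estimate $\omega_{f}(1)\leq 2\|f\|_{\infty}$ to write
\[\|\omega_{f}^{\theta}\|_{\ell^{1}}=\omega_{f}(1)+\sum_{n=1}^{\infty}\omega_{f}(\theta^{n})\leq 2\|f\|_{\infty}+\frac{\D(f)}{\log(1/\theta)}\lesssim \|f\|_{\D}.\]
Thus $\|\omega_{f}^{\theta}\|_{\ell^{1}}<\infty$ whenever $f\in\D(X,d)$, and the two inequalities stated in the lemma follow.

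For the $\ell^{p}$ estimate with $p>1$, the key observation is that $(\omega_{f}(\theta^{n}))_{n\geq 0}$ is uniformly bounded by $2\|f\|_{\infty}$, so the $\ell^{1}$ membership forces stronger summability. Concretely,
\[\|\omega_{f}^{\theta}\|_{\ell^{p}}^{p}=\sum_{n=0}^{\infty}\omega_{f}(\theta^{n})^{p}\leq (2\|f\|_{\infty})^{p-1}\sum_{n=0}^{\infty}\omega_{f}(\theta^{n})=(2\|f\|_{\infty})^{p-1}\|\omega_{f}^{\theta}\|_{\ell^{1}},\]
and the previously established bound $\|\omega_{f}^{\theta}\|_{\ell^{1}}\lesssim \|f\|_{\D}$ together with $\|f\|_{\infty}\leq \|f\|_{\D}$ yields $\|\omega_{f}^{\theta}\|_{\ell^{p}}\lesssim \|f\|_{\D}$.

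There is no genuine obstacle here; the only subtle point is remembering that $\omega_{f}^{\theta}$ is indexed from $n=0$, so that the constant term $\omega_{f}(1)$ must be controlled separately by $\|f\|_{\infty}$ rather than by $\D(f)$ (the integral $\int_{0}^{1}\omega_{f}(t)t^{-1}\dd t$ captures only the decay of $\omega_{f}$ near zero and a priori gives no information at $t=1$). This is precisely why the norm $\|f\|_{\D}=\|f\|_{\infty}+\D(f)$, and not $\D(f)$ alone, appears on the right-hand side of the comparisons.
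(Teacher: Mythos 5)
Your proof is correct and follows essentially the same route as the paper's: the same geometric decomposition of $(0,1]$ into the intervals $(\theta^{n+1},\theta^{n}]$, monotonicity of $\omega_{f}$ for the two-sided comparison, and the separate bound $\omega_{f}(1)\leq 2\|f\|_{\infty}$ for the $n=0$ term, which is exactly why $\|f\|_{\D}$ rather than $\D(f)$ appears. The only cosmetic difference is in the $\ell^{p}$ step, where the paper simply invokes the contractive embedding $\ell^{1}\hookrightarrow\ell^{p}$ while you interpolate via the uniform bound $2\|f\|_{\infty}$; both are one-line observations yielding the same conclusion.
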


\begin{proof}
We write the unit interval as the union of the intervals $[\theta^{n+1},\theta^n]$, where $n\geq 0$. The inequality $\D(f)\lesssim \|\omega^{\theta}_{f}\|_{\ell^1}$ is straightforward. For the other direction, observe that $\omega_{f}(1)\leq 2\|f\|_{\infty}$, and
\begin{align*}
\D(f)=\int_{0}^{1}\frac{\omega_f(t)}{t}\dd t=\sum_{n=0}^{\infty}\int_{\theta^{n+1}}^{\theta^{n}}\frac{\omega_f(t)}{t}\dd t\geq (1-\theta)\sum_{n=1}^{\infty}\omega_{f}(\theta^{n}),
\end{align*}
so that 
\begin{align*}
2\|f\|_{\D}\geq \omega_{f}(1)+2(1-\theta)\sum_{n=1}^{\infty}\omega_{f}(\theta^{n})\geq \|\omega^{\theta}_{f}\|_{\ell^1}.
\end{align*}
The $\ell^{p}$ inequalities now follow from the fact that $\ell^{1}$ embeds into $\ell^{p}$ contractively,
thus completing the proof.
\end{proof}

\begin{prop}
For $f,g\in \Cucb(X)$ the modulus of continuity satisfies
\[\omega_{fg}(t)\leq \omega_{g}(t) \|f\|_{\infty}+ \omega_{f}(t)\|g\|_{\infty}.\]
Moreover, the vector space $\D(X,d)$ is a Banach algebra with respect to the norm 
\begin{equation*}
\|f\|_{\D}=\|f\|_{\infty} + \D(f).
\end{equation*}
\end{prop}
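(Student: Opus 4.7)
The plan is to handle the two assertions sequentially, as the first inequality feeds directly into the submultiplicativity of the norm and, together with a routine completeness argument, gives the Banach algebra structure.

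For the modulus-of-continuity inequality, I would use the standard \emph{add and subtract} trick. For $x,y\in X$ with $d(x,y)\leq t\,\diam(X)$, write
\[
|f(x)g(x)-f(y)g(y)|\leq |f(x)|\,|g(x)-g(y)|+|g(y)|\,|f(x)-f(y)|,
\]
bound $|f(x)|\leq \|f\|_{\infty}$ and $|g(y)|\leq \|g\|_{\infty}$, and take the supremum over all such pairs $(x,y)$. This yields the stated bound on $\omega_{fg}(t)$ without any further analysis.

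For the Banach algebra property, I would first note that $\D(X,d)\subset \Cucb(X)$ is evidently a vector subspace (since $\omega_{af+bg}\leq |a|\omega_f+|b|\omega_g$) and that $\|\cdot\|_{\D}$ is a norm, with definiteness inherited from $\|\cdot\|_{\infty}$. Submultiplicativity follows from the first part: dividing the inequality by $t$ and integrating over $[0,1]$ gives $\D(fg)\leq \|f\|_{\infty}\D(g)+\|g\|_{\infty}\D(f)$, and combining with $\|fg\|_{\infty}\leq\|f\|_{\infty}\|g\|_{\infty}$ produces
\[
\|fg\|_{\D}\leq \|f\|_{\infty}(\|g\|_{\infty}+\D(g))+\D(f)\|g\|_{\infty}\leq \|f\|_{\D}\|g\|_{\D},
\]
using $\|g\|_{\infty}\leq\|g\|_{\D}$ in the last step.

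The only nontrivial remaining step is completeness. Given a Cauchy sequence $(f_n)$ in $\D(X,d)$, the inequality $\|\cdot\|_{\infty}\leq \|\cdot\|_{\D}$ makes it Cauchy in $\Cucb(X)$, hence uniformly convergent to some $f\in\Cucb(X)$. The key observation is that uniform convergence gives $\omega_{f_n-f_m}(t)\to \omega_{f_n-f}(t)$ as $m\to\infty$, uniformly in $t$, via the two-sided bound
\[
|\omega_{f_n-f_m}(t)-\omega_{f_n-f}(t)|\leq 2\|f_m-f\|_{\infty}.
\]
Fixing $n$ large and applying Fatou's lemma to $t\mapsto \omega_{f_n-f_m}(t)/t$ as $m\to\infty$ then yields $\D(f_n-f)\leq \liminf_{m}\D(f_n-f_m)$, which can be made arbitrarily small by the Cauchy hypothesis. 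Consequently $f_n-f\in\D(X,d)$, whence $f\in\D(X,d)$, and $\|f_n-f\|_{\D}\to 0$. The main (though mild) obstacle is justifying the passage to the limit inside the integral defining $\D$, which Fatou handles cleanly precisely because the moduli of continuity are nonnegative.
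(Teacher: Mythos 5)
Your proposal is correct and follows essentially the same route as the paper: the add-and-subtract estimate for $\omega_{fg}$, passage to a uniform limit $f\in\Cucb(X)$ via the sup-norm, and Fatou's lemma applied to $t\mapsto\omega_{f_n-f_m}(t)/t$ to get $\D(f_n-f)\leq\liminf_m\D(f_n-f_m)$. The only (harmless) difference is that the paper first verifies $f\in\D(X,d)$ by a separate truncation argument choosing $n(t)$ with $\|f-f_{n(t)}\|_{\infty}<t/2$, whereas you obtain membership directly from the Fatou step, which slightly streamlines the argument.
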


\begin{proof}
The first assertion follows since
\begin{align*}
|(fg)(x)-(fg)(y)|&\leq |g(x)-g(y)||f(x)|+|f(x)-f(y)||g(y)|,
\end{align*}
so that $\|fg\|_{\D}\leq \|f\|_{\D}\|g\|_{\D}$. 

To prove that $\D(X,d)$ is a Banach space let $(f_{n})_{n\in \mathbb N}$ be a Cauchy sequence in $\D(X,d)$. Then, $\sup_{n} \D(f_n)<\infty$ and $(f_{n})_{n\in \mathbb N}$ is Cauchy with respect to $\|\cdot \|_{\infty}$ and hence there is $f\in \Cucb(X)$ such that $\|f-f_n\|_{\infty}\to 0$. In particular, for each $t>0$ we can choose $n(t)$ large enough so that $\|f-f_{n(t)}\|_{\infty}<t/2$. Then, we have $\omega_{f}(t)\leq \omega_{f_{n(t)}}(t)+t$ so that for all $\varepsilon>0$ we have
\[\int_{\varepsilon}^{1}\frac{\omega_{f}(t)}{t}\dd t\leq 1+\D(f_{n(t)})\leq 1+\sup_{n} \D(f_n),\]
and therefore $\D(f)<\infty$, so $f\in \D(X,d)$.

Lastly, we need to show that $\D(f-f_n)\to 0$. To this end, note from the assumption that $\D(f_m-f_n)\to 0$ when $n,m\to \infty$. Since $\|f-f_{n}\|_{\infty}\to 0$, a simple argument shows that $\omega_{f_n}(t)\to \omega_{f}(t)$. Using Fatou's Lemma we write
\begin{align*}
\D(f-f_n)&=\int_{0}^{1}\lim_m\frac{\omega_{f_m-f_n}(t)}{t}\dd t\leq \liminf_m \int_{0}^{1}\frac{\omega_{f_m-f_n}(t)}{t}\dd t= \liminf_m \D(f_m-f_n),
\end{align*}
where the latter tends to zero as $n$ goes to infinity.
\end{proof}

\subsection{Module structure over $\D(X,d)$} 

An interesting feature of $\Dom\Ld$ is that it is closed under left multiplication by elements in $\D(X,d)$. In fact, we show that $\Dom\Ld$ is a Banach module over $\D(X,d)$. For $h\in L^{\infty}(X,\mu)$ consider the multiplication operator $\m_h:L^2(X,\mu)\to L^2(X,\mu)$ given by $\m_hf=hf$, so that $\m_{h}^{*}=\m_{h}$.

First, we prove that $\D(X,d)$ preserves the domain of the Dirichlet form $\E$.

\begin{lemma}\label{lem:form_domain_module}
Let $h\in \D(X,d)$ and $f\in \W$. Then, $\m_h f=hf\in \W$ and $$\|hf\|_{\E}\lesssim \|h\|_{\D}\|f\|_{\E}.$$
\end{lemma}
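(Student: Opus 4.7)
The plan is to decompose the product $hf(x)-hf(y)$ via the Leibniz-type identity
\[
h(x)f(x)-h(y)f(y)=h(x)(f(x)-f(y))+(h(x)-h(y))f(y),
\]
apply the elementary inequality $(a+b)^{2}\leq 2a^{2}+2b^{2}$, and handle the two resulting pieces separately. The $L^{2}$-part of $\|hf\|_{\E}$ is immediate from $\|hf\|_{L^{2}}\leq \|h\|_{\infty}\|f\|_{L^{2}}\leq \|h\|_{\D}\|f\|_{L^{2}}$.

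For the form part, the first piece gives at once
\[
\int_{X}\int_{X}\frac{|h(x)|^{2}|f(x)-f(y)|^{2}}{d(x,y)^{\delta}}\dd\mu(y)\dd\mu(x)\leq 2\|h\|_{\infty}^{2}\,\E(f,f),
\]
which is bounded by $\|h\|_{\D}^{2}\|f\|_{\E}^{2}$ up to a constant. The second piece is
\[
\int_{X}|f(y)|^{2}\,I(y)\dd\mu(y),\qquad I(y):=\int_{X}\frac{|h(x)-h(y)|^{2}}{d(x,y)^{\delta}}\dd\mu(x),
\]
so it suffices to prove the uniform bound $\sup_{y\in X}I(y)\lesssim \|h\|_{\D}^{2}$; this is the main technical step.

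To establish this, I would perform a dyadic-type annular decomposition at scale $e^{-1}$. Writing $D=\diam(X)$ and $r_{n}=e^{-n}D$, set $A_{n}(y):=B(y,r_{n})\setminus B(y,r_{n+1})$ for $n\geq 0$, so that $X=\bigsqcup_{n\geq 0}A_{n}(y)$ up to null sets. On $A_{n}(y)$ we have $d(x,y)\geq r_{n+1}$, hence $d(x,y)^{-\delta}\leq e^{(n+1)\delta}D^{-\delta}$, while $d(x,y)\leq r_{n}=e^{-n}D$ gives $|h(x)-h(y)|\leq \omega_{h}(e^{-n})$. Ahlfors regularity yields $\mu(A_{n}(y))\leq Cr_{n}^{\delta}=Ce^{-n\delta}D^{\delta}$. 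Multiplying these three bounds the contribution of $A_{n}(y)$ to $I(y)$ is at most $Ce^{\delta}\,\omega_{h}(e^{-n})^{2}$, so
\[
I(y)\leq Ce^{\delta}\sum_{n=0}^{\infty}\omega_{h}(e^{-n})^{2}=Ce^{\delta}\|\omega_{h}^{1/e}\|_{\ell^{2}}^{2}\lesssim \|h\|_{\D}^{2},
\]
where the final inequality is the $p=2$, $\theta=e^{-1}$ case of Lemma \ref{lem: Dini-properties}.

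Combining these estimates gives $\E(hf,hf)\lesssim \|h\|_{\D}^{2}(\E(f,f)+\|f\|_{L^{2}}^{2})=\|h\|_{\D}^{2}\|f\|_{\E}^{2}$, which together with the $L^{2}$ bound yields $hf\in\W$ and $\|hf\|_{\E}\lesssim \|h\|_{\D}\|f\|_{\E}$. The only nontrivial ingredient is the uniform annular estimate for $I(y)$; everything else is routine algebra, but that bound is precisely where the logarithmic singularity $d(x,y)^{-\delta}$ makes the Dini condition—rather than mere continuity—necessary and sufficient.
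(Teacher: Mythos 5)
Your proposal is correct and follows essentially the same route as the paper: the same Leibniz splitting of $h(x)f(x)-h(y)f(y)$, the bound $\|h\|_{\infty}^{2}\E(f,f)$ for one piece, and for the other the same annular decomposition at radii $e^{-n}\diam(X)$ reducing to $\sum_{n}\omega_{h}(e^{-n})^{2}\lesssim\|h\|_{\D}^{2}$ via the $p=2$ case of Lemma \ref{lem: Dini-properties}. The only cosmetic difference is that you place the weight $|f(y)|^{2}$ and integrate the singular factor in $x$ (and re-derive the annulus mass bound directly rather than quoting Lemma \ref{lem:Ahlfors_estimates}(3)), which is equivalent by the symmetry of the kernel.
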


\begin{proof}
Since $h$ is bounded we have that $hf\in L^2(X,\mu)$. Now,
\begin{align*}
\E(hf,hf)&=\frac{1}{2}\int_X\int_X\frac{|h(x)f(x)-h(y)f(y)|^2}{d(x,y)^{\delta}} \dd\mu(y)\dd\mu(x)\\
&\leq \int_X\int_X\frac{|f(x)(h(x)-h(y))|^2+|h(y)(f(x)-f(y))|^2}{d(x,y)^{\delta}}\dd\mu(y)\dd\mu(x)\\
&\lesssim  \int_X\int_X\frac{|f(x)|^{2}|(h(x)-h(y))|^2}{d(x,y)^{\delta}}\dd\mu(y)\dd\mu(x)+\|h\|_{\infty}^2\E(f,f),
\end{align*}
and we now prove that the remaining integral is finite.  For every $x\in X$ and integer $k\geq 0$, define $r_k=e^{-k}\diam(X)$ and the annulus $B_{x,k}=B(x,r_k)\setminus B(x,r_{k+1})$. Using Lemmas \ref{lem:Ahlfors_estimates} and \ref{lem: Dini-properties} we then find
\begin{align*}
\int_X\int_X\frac{|f(x)|^{2}|h(x)-h(y)|^2}{d(x,y)^{\delta}}\dd\mu(y)\dd\mu(x)
&= \int_X\sum_{k=0}^{\infty}\int_{B_{x,k}}\frac{|f(x)|^{2}|h(x)-h(y)|^2}{d(x,y)^{\delta}}\dd\mu(y)\dd\mu(x)\\
&\leq \int_X\sum_{k=0}^{\infty}\omega_h(e^{-k})^2\int_{B_{x,k}}\frac{|f(x)|^{2}}{d(x,y)^{\delta}}\dd\mu(y)\dd\mu(x)\\
&\lesssim \|f\|^{2}_{L^{2}}\sum_{k=0}^{\infty}\omega_h(e^{-k})^2\\
&\lesssim \|h\|_{\D}^{2}\|f\|^{2}_{L^{2}}.
\end{align*}
Thus we have shown that
\[\mathcal{E}(hf,hf)\lesssim \|h\|_{\D}^{2}\|f\|^{2}_{L^{2}}+\|h\|_{\infty}^2\E(f,f), \]
which implies $\|hf\|_{\mathcal{E}}\lesssim \|h\|_{\D}\|f\|_{\mathcal{E}}$.
\end{proof}
\begin{cor}\label{cor:Dini_embed_1}
There is a continuous inclusion $\D(X,d)\hookrightarrow\W$.
\end{cor}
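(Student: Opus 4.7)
The plan is to deduce the inclusion directly from Lemma \ref{lem:form_domain_module} by testing it against the constant function $f \equiv 1$. The key observation is that, because $\mu(X)<\infty$, the constant function $1$ belongs to $\W$ with $\|1\|_{L^{2}}^{2}=\mu(X)$ and $\E(1,1)=0$, so $\|1\|_{\E}=\mu(X)^{1/2}$.

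First I would record that every Dini continuous function lies in $L^{2}(X,\mu)$: since $h\in\D(X,d)\subset \Cucb(X)$ is bounded and $\mu(X)<\infty$, we have
\[
\|h\|_{L^{2}}\leq \|h\|_{\infty}\,\mu(X)^{1/2}\leq \mu(X)^{1/2}\,\|h\|_{\D}.
\]
Then I would apply Lemma \ref{lem:form_domain_module} with $f=1$: the product $h\cdot 1=h$ lies in $\W$ and
\[
\|h\|_{\E}=\|h\cdot 1\|_{\E}\lesssim \|h\|_{\D}\,\|1\|_{\E}=\mu(X)^{1/2}\|h\|_{\D}.
\]
Combining the two estimates gives a constant $C>0$ depending only on $(X,d,\mu)$ such that $\|h\|_{\E}\leq C\|h\|_{\D}$ for every $h\in\D(X,d)$, establishing the continuous inclusion.

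Alternatively, one can avoid invoking Lemma \ref{lem:form_domain_module} as a black box and simply read off the bound from its proof: the computation there showed
\[
\E(hf,hf)\lesssim \|h\|_{\D}^{2}\|f\|_{L^{2}}^{2}+\|h\|_{\infty}^{2}\E(f,f),
\]
and specialising to $f\equiv 1$ (so the second term vanishes) yields $\E(h,h)\lesssim \|h\|_{\D}^{2}\,\mu(X)$ directly. There is no real obstacle here; the only thing that must be checked is the availability of the constant function as a nonzero element of $\W$, which is precisely what the finiteness of $\mu$ buys us, and which is also the reason the analogous statement would fail for infinite measures (as flagged in the paper's discussion just before Lemma \ref{lem:Ahlfors_estimates}).
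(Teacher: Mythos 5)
Your proof is correct and follows essentially the same route as the paper: both apply Lemma \ref{lem:form_domain_module} to $h=h\cdot 1$, using that the constant function $1$ lies in $\W$ with $\E(1,1)=0$. The extra explicit tracking of the constant $\mu(X)^{1/2}$ is fine but not needed beyond what the paper records.
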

\begin{proof} Since the constant function $1\in\W$ and $\mathcal{E}(1,1)=0$, for $h\in\D(X,d)$ we have $h=h\cdot 1\in\W$ and $\| h\|_{\mathcal{E}}\lesssim \|h\|_{\D}$.
\end{proof}
\begin{lemma}\label{lem:commutator_bounded}
Let $h\in \D(X,d)$. The kernel
\[K_{h}:X\times X\setminus D\to \mathbb{R},\,\,K_{h}(x,y):=\frac{h(x)-h(y)}{d(x,y)^{\delta}},\]
defines a bounded operator $\K_{h}:L^2(X,\mu)\to L^2(X,\mu)$ given by $$\K_{h}f(x)=\int_X\frac{h(x)-h(y)}{d(x,y)^{\delta}}f(y)\dd\mu(y),$$ with operator norm $\|\K_{h}\|\lesssim \|h\|_{\D}.$ 
\end{lemma}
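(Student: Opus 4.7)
The plan is to prove boundedness of $\K_{h}$ via Schur's test on the integral kernel $K_{h}$. Since $K_{h}(x,y)=-K_{h}(y,x)$, the two Schur integrals coincide in absolute value, so it suffices to estimate
\[
M:=\sup_{x\in X}\int_{X}|K_{h}(x,y)|\dd\mu(y)
\]
and conclude $\|\K_{h}\|\leq M$.

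To bound $M$, I would decompose the integration domain into the annuli $B_{x,k}:=B(x,e^{-k}\diam(X))\setminus B(x,e^{-(k+1)}\diam(X))$, $k\geq 0$, exactly as in the proof of Lemma \ref{lem:form_domain_module}. For $y\in B_{x,k}$ we have $d(x,y)\leq e^{-k}\diam(X)$, so by the definition of the modulus of continuity $|h(x)-h(y)|\leq \omega_{h}(e^{-k})$. Part (3) of Lemma \ref{lem:Ahlfors_estimates}, applied with $r=e^{-k}\diam(X)$, yields the uniform bound $\int_{B_{x,k}} d(x,y)^{-\delta}\dd\mu(y)\lesssim 1$, with the implicit constant independent of $x$ and $k$. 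Summing over $k$ gives
\[
\int_{X}|K_{h}(x,y)|\dd\mu(y)\;=\;\sum_{k=0}^{\infty}\int_{B_{x,k}}\frac{|h(x)-h(y)|}{d(x,y)^{\delta}}\dd\mu(y)\;\lesssim\;\sum_{k=0}^{\infty}\omega_{h}(e^{-k})\;=\;\|\omega_{h}^{1/e}\|_{\ell^{1}},
\]
and the last quantity is $\lesssim \|h\|_{\D}$ by Lemma \ref{lem: Dini-properties} applied with $\theta=1/e$. This bound is uniform in $x$, so $M\lesssim \|h\|_{\D}$.

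Finally, to promote the Schur estimate to $L^{2}$-boundedness I would invoke the standard Schur test: the integral operator with kernel $K_{h}$ is bounded on $L^{2}(X,\mu)$ with operator norm at most $\sqrt{M_{1}M_{2}}$, where $M_{1}=\sup_{x}\int |K_{h}(x,y)|\dd\mu(y)$ and $M_{2}=\sup_{y}\int |K_{h}(x,y)|\dd\mu(x)$. Antisymmetry of $K_{h}$ gives $M_{1}=M_{2}=M$, hence $\|\K_{h}\|\leq M\lesssim \|h\|_{\D}$, as required.

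There is no serious obstacle here; the argument is essentially a diagonal-avoidance computation. The only points that require care are (i) indexing the annular decomposition so that part (3) of Lemma \ref{lem:Ahlfors_estimates} applies with $r=e^{-k}\diam(X)\in(0,\diam(X)]$, and (ii) matching the modulus of continuity to this scale, which works cleanly because $\omega_{h}$ is defined on $[0,1]$ with the $\diam(X)$ normalisation built in, so that $\omega_{h}(e^{-k})$ is precisely the supremum of $|h(x)-h(y)|$ over $d(x,y)\leq e^{-k}\diam(X)$.
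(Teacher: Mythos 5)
Your argument is correct. The annular decomposition, the use of $\omega_h(e^{-k})$ on each annulus, part (3) of Lemma \ref{lem:Ahlfors_estimates}, and the appeal to Lemma \ref{lem: Dini-properties} to sum the series are exactly the ingredients of the paper's proof, and your uniform bound $\sup_x\int_X|K_h(x,y)|\,\mathrm{d}\mu(y)\lesssim\|h\|_{\D}$ is established the same way. Where you diverge is in the final functional-analytic step: the paper does not use the Schur test but instead sharpens the pointwise estimate to $|\K_h f(x)|\lesssim\sum_k\omega_h(e^{-k})\dashint_{B(x,e^{-k}\diam X)}|f(y)|\,\mathrm{d}\mu(y)\lesssim\|h\|_{\D}\,\M(f)(x)$, where $\M$ is the Hardy--Littlewood maximal function, and then invokes the maximal function theorem on the doubling space $(X,d,\mu)$ to conclude $L^2$-boundedness. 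Your Schur-test route is more elementary and self-contained (it needs only Cauchy--Schwarz and Tonelli, not the covering-lemma machinery behind the maximal theorem), and the antisymmetry observation $|K_h(x,y)|=|K_h(y,x)|$ correctly reduces the two Schur conditions to one; the paper's maximal-function route yields the slightly stronger pointwise domination $|\K_h f|\lesssim\|h\|_{\D}\M(f)$, which is of independent use but not needed for the lemma as stated. Both give $\|\K_h\|\lesssim\|h\|_{\D}$.
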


\begin{proof}
First we show that the operator $\K_h:L^2(X,\mu)\to L^2(X,\mu)$ is well-defined and bounded. Indeed, let $x\in X$ and for every integer $k\geq 0$ define $r_k=e^{-k}\diam(X)$ as well as the annulus $B_{x,k}=B(x,r_k)\setminus B(x,r_{k+1}).$ Then, we have that 
\begin{align*}
|\K_h f(x)|&\leq \sum_{k=0}^{\infty}\int_{B_{x,k}}\frac{|h(x)-h(y)|}{d(x,y)^{\delta}}|f(y)|\dd\mu(y)\\
&\lesssim \sum_{k=0}^{\infty} \omega_h(e^{-k})\dashint_{B(x,r_k)}|f(y)|\dd\mu(y)\\
&\lesssim \|h\|_{\D}\M(f)(x),
\end{align*}
where $\M(f)$ is the maximal function of $f\in L^2(X,\mu)$ given by 
\begin{equation*}
\M(f)(x):=\sup_{r>0}\dashint_{B(x,r)}|f(y)|\dd\mu(y).
\end{equation*}
From Hardy-Littlewood's Maximal Function Theorem \cite[Theorems 2.2]{Hei} we obtain that $\|\M(f)\|_{L^2}\lesssim \|f\|_{L^2}.$ Consequently, we have 
$$\|\K_{h}f\|_{L^2}\lesssim \|h\|_{\D} \|f\|_{L^2},$$
which completes the proof.
\end{proof}

By Lemma \ref{lem:form_domain_module}, $\m_h:\W\to \W$ whenever $h\in\D(X,d)$. In fact, the following holds.

\begin{thm}\label{theorem:domain_module}
Let $h\in \D(X,d)$. Then, $\m_{h}:\Dom \Ld\to \Dom\Ld$ and the commutator $[\Ld,\m_{h}]:\Dom\Ld\to L^{2}(X,\mu)$ extends to the bounded operator $\K_h$.
\end{thm}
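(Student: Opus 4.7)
The plan is to apply Proposition \ref{prop: domain-commutator-form} to the operator $T = \m_h$. Since $h$ is real-valued, $\m_h^{*}=\m_h$, and Lemma \ref{lem:form_domain_module} already guarantees that $\m_h:\W\to\W$ is continuous. What is left is to verify the strong hypothesis in part (2), namely
\[|\E(\m_h f, g) - \E(f, \m_h g)| \lesssim \|h\|_{\D}\|f\|_{L^{2}}\|g\|_{L^{2}}, \qquad f,g\in\W,\]
and then to identify the resulting bounded extension with $\K_h$.

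The heart of the argument is an algebraic identity. Using the telescoping split
\[h(x)f(x) - h(y)f(y) = h(x)(f(x)-f(y)) + f(y)(h(x)-h(y)),\]
and the analogous expansion with $f$ replaced by $g$, substituting both into the integrands of $\E(hf,g)$ and $\E(f,hg)$ and subtracting, the ``diagonal'' term $h(x)(f(x)-f(y))(g(x)-g(y))$ appears with opposite signs and cancels. What remains is
\[\E(hf,g) - \E(f,hg) = \frac{1}{2}\int_X\int_X \frac{(h(x)-h(y))\bigl[f(y)g(x) - f(x)g(y)\bigr]}{d(x,y)^{\delta}}\dd\mu(y)\dd\mu(x).\]
The two halves in the bracket are exchanged by $(x,y)\mapsto (y,x)$, which reverses the sign of $h(x)-h(y)$ while preserving $d(x,y)^{\delta}$; consequently they contribute equally, and the symmetrized expression becomes
\[\E(hf,g) - \E(f,hg) = \int_X\int_X \frac{(h(x)-h(y))f(y)g(x)}{d(x,y)^{\delta}}\dd\mu(y)\dd\mu(x) = \langle \K_h f,\, g\rangle_{L^{2}}.\]

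Given this identity, Lemma \ref{lem:commutator_bounded} yields the required bound with $C_{\m_h} = \|\K_h\| \lesssim \|h\|_{\D}$, so Proposition \ref{prop: domain-commutator-form}(2) gives $\m_h:\Dom\Delta\to\Dom\Delta$ and a bounded extension of $[\Delta,\m_h]$ on $L^{2}(X,\mu)$. To identify this extension with $\K_h$, take $f\in\Dom\Delta$ and $g\in\W$: the identity above rewrites as $\langle [\Delta,\m_h]f, g\rangle_{L^{2}} = \langle \K_h f, g\rangle_{L^{2}}$, and the density of $\W$ in $L^{2}(X,\mu)$ (which holds since $\Hol_{\alpha}(X,d)\subset\W$) concludes the proof.

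The only genuinely delicate point is the symmetrization step: splitting the bracketed difference into two separate double integrals requires absolute convergence of each piece. This is precisely delivered by the maximal-function estimate used in Lemma \ref{lem:commutator_bounded}, which yields $\int_X\int_X |h(x)-h(y)||f(y)||g(x)|/d(x,y)^{\delta}\dd\mu(y)\dd\mu(x) \lesssim \|h\|_{\D}\|f\|_{L^{2}}\|g\|_{L^{2}}$. Everything else is a routine computation once the telescoping decomposition is put in place, and no further obstacle is expected.
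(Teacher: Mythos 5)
Your proposal is correct and follows essentially the same route as the paper: reduce to Proposition \ref{prop: domain-commutator-form} via Lemma \ref{lem:form_domain_module}, compute $\E(hf,g)-\E(f,hg)$ by the telescoping cancellation, symmetrize under $(x,y)\mapsto(y,x)$ to identify the difference with $\langle \K_h f,g\rangle_{L^2}$, and invoke Lemma \ref{lem:commutator_bounded} for both the absolute convergence needed in the symmetrization and the final bound. You even flag the same delicate point (finiteness of the off-diagonal integral) that the paper cites to justify the change of variables.
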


\begin{proof}
Let $f\in \Dom \Delta$. By Lemma \ref{lem:form_domain_module} we have that $hf\in \W$, so by Proposition \ref{prop: domain-commutator-form} it suffices to show that there is $C_{h}>0$ such that, for all $g\in \W$, we have 
\begin{equation*}
|\E(hf,g)-\mathcal{E}(f,hg)|\leq C_{h}\| f\|_{L^{2}}\|g\|_{L^2}.
\end{equation*}
Subtracting the integrands shows that
\begin{align}
\nonumber\E(hf,g)-\mathcal{E}(f,hg)&=\frac{1}{2}\int_{X}\int_{X}\frac{(h(y)-h(x))f(x)g(y)+(h(x)-h(y))f(y)g(x)}{d(x,y)^{\delta}}\mathrm{d}\mu(y)\mathrm{d}\mu(x)\\ 
\label{form-integral} &=\int_{X}\int_{X}\frac{(h(x)-h(y))f(y)g(x)}{d(x,y)^{\delta}}\mathrm{d}\mu(y)\mathrm{d}\mu(x)
\end{align}
where the second equality follows since \eqref{form-integral} is finite by Lemma \ref{lem:commutator_bounded}, and the change of variables $(x,y)\mapsto (y,x)$.
Therefore, by Lemma \ref{lem:commutator_bounded} we find
\begin{align*}
|\nonumber\E(hf,g)-\mathcal{E}(f,hg)|=|\langle \K_{h}f,g\rangle_{L^{2}}|\lesssim \|h\|_{\D}\|f\|_{L^{2}}\|g\|_{L^{2}},
\end{align*}
as desired.
\end{proof}
\begin{cor}\label{cor:Din_in_Dom} There is a continuous inclusion $\D(X,d)\hookrightarrow\Dom\Delta$.
\end{cor}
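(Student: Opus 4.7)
The proof is essentially a two-line consequence of Theorem \ref{theorem:domain_module} once one exploits that the constant function belongs to $\ker\Delta$. Here is the plan.

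First I would observe that $1\in L^\infty(X,\mu)\subset L^2(X,\mu)$ satisfies $\mathcal{E}(1,1)=0$, so by Proposition \ref{prop:kernel} we have $1\in\Dom\Delta$ with $\Delta 1=0$. Given any $h\in\D(X,d)\subset L^\infty(X,\mu)$, we then write $h=\m_h 1$. Theorem \ref{theorem:domain_module} says that $\m_h$ preserves $\Dom\Delta$, so $h\in\Dom\Delta$; this establishes the inclusion on the level of sets.

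For the continuity statement I would equip $\Dom\Delta$ with its graph norm $\|f\|_{\Delta}^2:=\|f\|_{L^2}^2+\|\Delta f\|_{L^2}^2$. Applying the commutator identity from Theorem \ref{theorem:domain_module} to $f=1$ yields
\[\Delta h=\Delta \m_h 1=[\Delta,\m_h]\,1+\m_h\Delta 1=\K_h 1,\]
since $\Delta 1=0$. Lemma \ref{lem:commutator_bounded} now provides $\|\K_h\|\lesssim \|h\|_{\D}$, so
\[\|\Delta h\|_{L^2}=\|\K_h 1\|_{L^2}\leq \|\K_h\|\,\mu(X)^{1/2}\lesssim \|h\|_{\D},\]
and $\|h\|_{L^2}\leq \mu(X)^{1/2}\|h\|_\infty\leq \mu(X)^{1/2}\|h\|_{\D}$. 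Adding these gives $\|h\|_{\Delta}\lesssim \|h\|_{\D}$, which is precisely the claimed continuity.

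There is no real obstacle: the entire content has been packaged into Theorem \ref{theorem:domain_module} and Lemma \ref{lem:commutator_bounded}, and the finiteness of $\mu$ is what lets us test the commutator on the constant vector and convert $L^\infty$-bounds into $L^2$-bounds. The only point worth a sentence of care in the write-up is that $1$ is genuinely in the domain of $\Delta$ (not merely in $\W$), which justifies the algebraic manipulation $\Delta\m_h 1=[\Delta,\m_h]1+\m_h\Delta 1$ as an identity in $L^2(X,\mu)$.
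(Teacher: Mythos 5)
Your proposal is correct and is essentially identical to the paper's own proof: both write $h=\m_h\cdot 1$, use Theorem \ref{theorem:domain_module} to get $h\in\Dom\Delta$, and bound $\|\Delta h\|_{L^2}=\|[\Delta,\m_h]1\|_{L^2}\lesssim\|h\|_{\D}$ via the boundedness of the commutator (equivalently of $\K_h$ from Lemma \ref{lem:commutator_bounded}). Your extra remark that one must know $1\in\Dom\Delta$, not merely $1\in\W$, is a worthwhile point of care and is covered by Proposition \ref{prop:kernel}.
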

\begin{proof}
For $h\in\D(X,d)$ write $h=\m_h\cdot 1\in\Dom\Delta$, as $1\in\Dom\Delta$. Then, since $1\in \ker \Delta$,
$$\|h\|_{L^{2}}+\|\Delta h\|_{L^{2}}\lesssim\|h\|_{\infty}+\|[\Delta,\m_h]\cdot 1\|_{L^{2}}\lesssim \|h\|_{\infty}+\|[\Delta,\m_{h}]\|\lesssim\|h\|_{\D},$$
as claimed.
\end{proof}

\subsection{Integral representation}\label{sec:integralrep}
We have seen that $\Dom\Ld$ contains the space of Dini continuous functions on $(X,d)$. In this section we show that $\Ld$ admits a singular integral representation on those functions. We emphasise that this integral representation does not determine $\Ld$, unless $\D(X,d)\subset \Dom \Ld$ is a core for $\Ld$. The latter we only know to be true in special cases, see Section \ref{sec:Examples}.
\begin{prop}\label{prop:integral_rep}
For every $f\in \D(X,d)\subset\Dom\Ld$ and $x\in X$ we have the principal value integral representation $$\Ld f(x)=\int_X \frac{f(x)-f(y)}{d(x,y)^{\delta}}\dd\mu(y),$$
and $\Ld f\in L^{\infty}(X,\mu)\subset L^{2}(X,\mu)$.
\end{prop}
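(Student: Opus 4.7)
The plan is to deduce the integral representation directly from the commutator identity of Theorem \ref{theorem:domain_module}, evaluated on the constant function $1\in\ker\Ld\subset\Dom\Ld$ from Proposition \ref{prop:kernel}.

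First, for $f\in\D(X,d)\subset\Dom\Ld$ (by Corollary \ref{cor:Din_in_Dom}), I would write $f=\m_{f}\cdot 1$. Since Theorem \ref{theorem:domain_module} tells us that $\m_{f}$ maps $\Dom\Ld$ into itself and that $[\Ld,\m_{f}]$ extends to the bounded operator $\K_{f}$ on $L^2(X,\mu)$, this yields the identity
\[
\Ld f=\Ld(\m_{f}\cdot 1)=[\Ld,\m_{f}]\cdot 1+\m_{f}\Ld 1=\K_{f}\cdot 1,
\]
understood as an equality in $L^2(X,\mu)$.

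Second, to pass to the pointwise integral representation, I would revisit the annular decomposition used in the proof of Lemma \ref{lem:commutator_bounded}. Applied with $h=f$ and with the constant function $1$ in place of the second argument, it yields the pointwise estimate
\[
\int_X\frac{|f(x)-f(y)|}{d(x,y)^{\delta}}\dd\mu(y)\lesssim \sum_{k=0}^{\infty}\omega_{f}(e^{-k})\lesssim \|f\|_{\D}
\]
for every $x\in X$, where the second inequality comes from Lemma \ref{lem: Dini-properties}. Consequently the signed integral $\int_X\frac{f(x)-f(y)}{d(x,y)^{\delta}}\dd\mu(y)$ converges absolutely for every $x\in X$, defines a bounded function of $x$, and is by construction a pointwise representative of the $L^2$-class $\K_{f}\cdot 1 = \Ld f$; the claimed $L^{\infty}$-bound follows from the same estimate.

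The essential conceptual step is the identity $[\Ld,\m_{f}]\cdot 1=\Ld f$, which converts the abstract spectral-theoretic definition of $\Ld$ into an explicit singular integral; no serious analytic obstacle remains, because the Dini modulus of continuity of $f$ absorbs the near-diagonal singularity of the kernel $d(x,y)^{-\delta}$. I note in passing that the terminology \emph{principal value} in the statement is in fact slightly stronger than necessary: although $d(x,y)^{-\delta}$ is not itself integrable, the combined integrand $\tfrac{f(x)-f(y)}{d(x,y)^{\delta}}$ is absolutely integrable, so no symmetric truncation around $y=x$ is actually needed.
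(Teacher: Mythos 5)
Your proposal is correct, and it reaches the same key object as the paper --- the identity $\Ld f=\K_f(1)$ --- but by a different mechanism. The paper's proof introduces the densely defined operator $\Ld_0 f(x)=\int_X \frac{f(x)-f(y)}{d(x,y)^{\delta}}\dd\mu(y)$, notes $\Ld_0 f=\K_f(1)\in L^{\infty}(X,\mu)$ via Lemma \ref{lem:commutator_bounded}, and then verifies the weak identity $\E(f,g)=\langle \Ld_0 f,g\rangle_{L^2}$ for all $g\in\W$, which identifies $\Ld f$ with $\Ld_0 f$ through the form--operator correspondence. You instead take Theorem \ref{theorem:domain_module} as a black box and evaluate the commutator on the constant function: $\Ld f=\Ld(\m_f\cdot 1)=[\Ld,\m_f]\cdot 1=\K_f\cdot 1$, using $1\in\ker\Ld$. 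This is legitimate --- the theorem states that $[\Ld,\m_f]$ extends to $\K_f$, so they agree on $1\in\Dom\Ld$ --- and it is essentially the same trick the paper already uses in Corollary \ref{cor:Din_in_Dom}; your route avoids re-deriving the weak-form identity (which is the same Fubini computation hidden inside the proof of Theorem \ref{theorem:domain_module}), at the cost of leaning on the precise form of the extension rather than just its boundedness. Your pointwise $L^{\infty}$ bound via the annular decomposition and Lemma \ref{lem: Dini-properties} matches the paper's, and your closing remark that the integral converges absolutely (so no symmetric truncation is needed) is consistent with how the paper actually treats the ``principal value.''
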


\begin{proof}
Let $\Ld_0:\D(X,d)\subset L^2(X,\mu)\to L^2(X,\mu)$ be the densely defined operator given by $$\Ld_0f(x)=\int_X \frac{f(x)-f(y)}{d(x,y)^{\delta}}\dd\mu(y).$$ To see that $\Delta_0(f)\in L^{2}(X,\mu)$ we observe that $\Delta_0(f)=\K_f(1)$, where $\K_f$ is the bounded operator of Lemma \ref{lem:commutator_bounded}, hence $|\Ld_0f(x)|\lesssim \|f\|_{\D},$ so that $\Delta_0(f)\in L^{\infty}(X,\mu)$. Then, for $f\in \D(X,d)$ and $g\in \W$ one observes that $\E(f,g)=\langle \Ld_0f, g\rangle_{L^2}$. In particular, from Corollary \ref{cor:Din_in_Dom} we obtain that for every $g\in \Dom \E$ it holds $$\langle \Delta f,  g\rangle_{L^2} = \langle \Ld_0f, g\rangle_{L^2}.$$ Consequently, $\Delta f=\Delta_0 f$.
\end{proof}

\subsection{Smooth vectors}\label{sec:smooth}
Recall from \cite[Section X.6]{RS} that the set of \textit{smooth vectors for} $\Ld$ is given by $$C^{\infty}(\Ld)=\bigcap_{n=1}^{\infty}\Dom(\Ld^n).$$ Since $\Ld$ is self-adjoint, the set $C^{\infty}(\Ld)$ is dense in $\Dom(\Ld)$. Here we show that $C^{\infty}(\Ld)$ contains the set of H\"older continuous functions on $X$.

First, let $0<\alpha \leq 1$ and $f\in \Cucb(X)$. We say that $f$ is \textit{almost} $\alpha$\textit{-H\"older} if it is $\beta$-H\"older for every $0<\beta <\alpha$. The set of almost $\alpha$-H\"older functions will be denoted by $\Hol_{<\alpha}(X,d)$. We remark that since $\diam(X)<\infty$ one has $\Hol_{\alpha}(X,d)\subset \Hol_{<\alpha}(X,d)$.

\begin{prop}\label{prop:smooth_vectors}
For every $0<\alpha\leq 1$ we have $$\Ld(\Hol_{<\alpha}(X,d))\subset \Hol_{<\alpha}(X,d)$$ and hence $\Hol_{<\alpha}(X,d)\subset C^{\infty}(\Ld).$ 
\end{prop}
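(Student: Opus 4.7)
The approach is to use the integral representation $\Ld f(x) = \int_X (f(x)-f(y)) d(x,y)^{-\delta} \dd\mu(y)$ from Proposition \ref{prop:integral_rep}, valid for H\"older functions since $\Hol_\beta(X,d)\subset \D(X,d)$, and then to estimate $|\Ld f(x)-\Ld f(x')|$ directly by a near/far splitting with respect to $r:=d(x,x')$. Fix $f\in\Hol_{<\alpha}(X,d)$. It suffices, for each $\gamma\in(0,\alpha)$, to choose $\beta\in(\gamma,\alpha)\cap(0,1)$ and show that $\Ld f$ is $\gamma$-H\"older; this gives $\Ld(\Hol_{<\alpha}(X,d))\subset \Hol_{<\alpha}(X,d)$, and iterating yields $\Ld^{n}f\in\Hol_{<\alpha}(X,d)\subset L^{2}(X,\mu)$ for every $n$, so $f\in\bigcap_{n}\Dom\Ld^{n}=C^{\infty}(\Ld)$.

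For $x,x'\in X$ with $r:=d(x,x')$ small, set $U:=B(x,2r)\cup B(x',2r)$, which is contained in both $B(x,3r)$ and $B(x',3r)$. Write $\Ld f(x)-\Ld f(x')=I_{U}+I_{X\setminus U}$ in the obvious way. For the near piece $I_{U}$, estimate each half separately using the $\beta$-H\"older bound and Lemma \ref{lem:Ahlfors_estimates}(1):
\[
\int_{U}\frac{|f(x)-f(y)|}{d(x,y)^{\delta}}\dd\mu(y)\leq \Hol_{\beta}(f)\int_{B(x,3r)}\frac{\dd\mu(y)}{d(x,y)^{\delta-\beta}}\lesssim r^{\beta},
\]
and similarly for the $x'$-term, so $|I_{U}|\lesssim r^{\beta}$.

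For $I_{X\setminus U}$, both $d(x,y)$ and $d(x',y)$ are at least $2r$ and are therefore comparable. Decompose the integrand as
\[
\frac{f(x)-f(x')}{d(x,y)^{\delta}}+\bigl(f(x')-f(y)\bigr)\Bigl(d(x,y)^{-\delta}-d(x',y)^{-\delta}\Bigr).
\]
The first term integrates to at most $|f(x)-f(x')|\int_{X\setminus B(x,2r)}d(x,y)^{-\delta}\dd\mu(y)\lesssim r^{\beta}\log(r^{-1})$ by Lemma \ref{lem:Ahlfors_estimates}(4). For the second, the mean value theorem applied to $t\mapsto t^{-\delta}$, together with $|d(x,y)-d(x',y)|\leq r\leq d(x,y)/2$, yields $|d(x,y)^{-\delta}-d(x',y)^{-\delta}|\lesssim r\cdot d(x,y)^{-\delta-1}$; combined with $|f(x')-f(y)|\lesssim d(x',y)^{\beta}\lesssim d(x,y)^{\beta}$, this piece contributes
\[
\lesssim r\int_{X\setminus B(x,2r)}\frac{\dd\mu(y)}{d(x,y)^{\delta+1-\beta}}\lesssim r\cdot r^{-(1-\beta)}=r^{\beta},
\]
by Lemma \ref{lem:Ahlfors_estimates}(2) with $s=1-\beta>0$. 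Summing, $|\Ld f(x)-\Ld f(x')|\lesssim r^{\beta}\log(r^{-1})\lesssim r^{\gamma}$, the desired H\"older estimate.

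The main obstacle is the singular behaviour of the kernel at the diagonal: the near/far decomposition is essential to prevent the near-region integrals from blowing up, while in the far region the delicate point is that Lemma \ref{lem:Ahlfors_estimates}(4) unavoidably produces a logarithmic factor. That factor can only be absorbed by dropping from exponent $\beta$ to a strictly smaller $\gamma$, which is precisely why the statement concerns $\Hol_{<\alpha}$ rather than $\Hol_{\alpha}$ and why, when iterating, one must at each step sacrifice an arbitrarily small amount of regularity.
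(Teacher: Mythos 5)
Your proof is correct and follows essentially the same route as the paper's: the same near/far decomposition at scale $r=d(x,x')$, the same splitting of the far integrand into the term carrying $f(x)-f(x')$ (which produces the logarithm via Lemma \ref{lem:Ahlfors_estimates}(4)) and the kernel-difference term controlled by $|d(x,y)^{-\delta}-d(x',y)^{-\delta}|\lesssim r\,d(x,y)^{-\delta-1}$, and the same absorption of the logarithmic factor by dropping to a strictly smaller exponent. The only differences are cosmetic (your ball $B(x,2r)\cup B(x',2r)$ versus the paper's $B(y,3r)$, and absorbing $\log(r^{-1})$ directly into $r^{\gamma-\beta}$ rather than via an explicit elementary inequality).
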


\begin{proof}
We assume without loss of generality that $\diam(X)<3^{-1}$. Let $f\in \Hol_{<\alpha}(X,d)$ and $0< \varepsilon <\alpha$. We claim that there is $\Lambda_{f,\varepsilon}>0$ such that 
\begin{equation}\label{eq:smooth_vectors_1}
|\Ld f(x)-\Ld f(y)|\leq \Lambda_{f,\varepsilon}d(x,y)^{\varepsilon}.
\end{equation}
To this end, let $0<\beta <\alpha$ be an arbitrary exponent such that $$|f(x)-f(y)|\leq C_{f,\beta}d(x,y)^{\beta},$$ for some $C_{f,\beta}>0$. Also, fix $x\neq y \in X$ and define $r=d(x,y)$ as well as $B=B(y,3r).$ Then, using a decomposition similar to the one used in \cite[Theorem 1.2]{GSV} we write 
\begin{align*}
\Ld f(x) - \Ld f(y)&=\int_B\frac{f(x)-f(z)}{d(x,z)^{\delta}}\dd \mu(z) - \int_B\frac{f(y)-f(z)}{d(y,z)^{\delta}}\dd\mu(z)\\
&\quad+ \int_{X\setminus B} \frac{f(x)-f(z)}{d(x,z)^{\delta}}-\frac{f(y)-f(z)}{d(y,z)^{\delta}} \dd\mu (z),
\end{align*}
where we denote the integrals by $\I_1,\I_2$ and $\I_3$, respectively. Also, let us denote by $C_s$ the constant $Ce^{\delta +s}(e^s-1)^{-1}$ found in parts (1) and (2) of Lemma \ref{lem:Ahlfors_estimates}.

It is straightforward to check that 
\begin{align*}
\tag{i} |\I_1|&\lesssim C_{f,\beta} C_{\beta}d(x,y)^{\beta}\\
\tag{ii} |\I_2|&\lesssim C_{f,\beta} C_{\beta}d(x,y)^{\beta}.
\end{align*}
Further, we write $\I_3$ as the sum of the integrals $\J_1$ and $\J_2$, where 
\begin{align*}
\J_1&=\int_{X\setminus B}\frac{f(x)-f(y)}{d(x,z)^{\delta}}\dd \mu(z)\\
\J_2&=\int_{X\setminus B}\left( f(y)-f(z)\right) \left(\frac{1}{d(x,z)^{\delta}}-\frac{1}{d(y,z)^{\delta}}\right) \dd \mu (z).
\end{align*}
Before estimating $|\J_1|$ and $|\J_2|$, we note that for every $0<t\leq 1$ and $0<\gamma <1$ one has
\begin{equation}\label{eq:smooth_vectors_2}
t\log(t^{-1})\leq \frac{1}{e(1-\gamma)}t^{\gamma}.
\end{equation}
Indeed, the function $\psi:(0,1]\to \mathbb R$ given by $\psi(t)=t^{1-\gamma}\log(t^{-1})$ has a maximum at $t=e^{-(1-\gamma)^{-1}}$ and $$\psi(e^{-(1-\gamma)^{-1}})=\frac{1}{e(1-\gamma)}.$$

From Lemma \ref{lem:Ahlfors_estimates} and inequality (\ref{eq:smooth_vectors_2}), for every $0<\gamma <1$ we have that
\begin{align*}
\tag{iii} |\J_1|&\lesssim C_{f,\beta}d(x,y)^{\beta}\log((3d(x,y))^{-1})\\
&\leq \frac{C_{f,\beta}}{\beta}d(x,y)^{\beta}\log(d(x,y)^{-\beta})\\
&\lesssim \frac{C_{f,\beta}}{\beta(1-\gamma)}d(x,y)^{\beta \gamma}.
\end{align*}
Now, observe as in \cite[Lemma 2.3]{GSV} that for $z\in X\setminus B$ we have that 
\begin{equation}\label{eq:smooth_vectors_3}
\left| \frac{1}{d(x,z)^{\delta}}-\frac{1}{d(y,z)^{\delta}}\right|\lesssim d(x,y)d(y,z)^{-\delta-1}.
\end{equation}
Consequently, from Lemma \ref{lem:Ahlfors_estimates} and inequality (\ref{eq:smooth_vectors_3}) we obtain 
\begin{align*}
\tag{iv} |\J_2|&\lesssim C_{f,\beta} d(x,y) \int_{X\setminus B} \frac{1}{d(y,z)^{\delta +1-\beta}}\dd \mu(z)\\
&\lesssim C_{f,\beta}C_{1-\beta}d(x,y)(3d(x,y))^{-1+\beta}\\
&\lesssim C_{f,\beta}C_{1-\beta}d(x,y)^{\beta}.
\end{align*}

Since $C_{\beta}, C_{1-\beta}$ can be bounded above (as functions of $\beta$) by some multiple of $\beta^{-1}$, from (i), (ii), (iii) and (iv) it follows that 
\begin{equation*}
\lvert \Ld f(x) - \Ld f(y) \rvert \lesssim \frac{C_{f,\beta}}{\beta(1-\gamma)}d(x,y)^{\beta \gamma}.
\end{equation*}
By choosing $\beta=(\alpha+\varepsilon)/2$ and $\gamma=2\varepsilon/(\alpha+\varepsilon)$, we obtain \eqref{eq:smooth_vectors_1} and the proof is complete.
\end{proof}

\section{Examples}\label{sec:Examples}

We present several examples of Ahlfors regular metric-measure spaces on which $\Ld$ can be well understood and has particularly interesting properties. For these examples, the Lipschitz continuous functions turn out to be a core of $\Ld$, which is equivalent to saying that the restriction $\Ld_0: \Lip(X,d)\to L^2(X,\mu)$ given by 
\begin{equation}\label{eq:small_Delta}
\Ld_0 f(x)=\int_X \frac{f(x)-f(y)}{d(x,y)^{\delta}} \dd\mu(y)
\end{equation}
is essentially self-adjoint and $\Ld$ is its unique self-adjoint extension. Moreover, in most examples we obtain sharp estimates for the threshold $t_0>0$ of Corollary \ref{cor:cutoff}, meaning that the compact operators $e^{-t\Ld}$ are trace class if and only if $t>t_0$. 
\subsection{Ahlfors spaces as noncommutative manifolds}\label{sec:ncmanifold}
We provide an interpretation of the results of this paper in the language of Alain Connes' noncommutative geometry \cite{Connes}. This interpretation has been the guiding idea for our work, but our results and the remaining examples below are independent of it.
\begin{definition} A \emph{spectral triple} $(\mathcal{A}, H, D)$ consists of a complex unital $*$-algebra $\mathcal{A}$, a complex Hilbert space $H$ and a self-adjoint operator $D:\Dom D\to H$ such that
\begin{enumerate}
\item $\mathcal{A}\subset \mathbb{B}(H)$;
\item the operator $D$ has compact resolvent; 
\item for all $a\in\mathcal{A}$, $a:\Dom D\to \Dom D$ and $[D,a]$ extends to a bounded operator.
\end{enumerate}
\end{definition}
This definition is motivated by first order elliptic operators on compact Riemannian manifolds. If $M$ is such a manifold, we can choose $\mathcal{A}=C^{1}(M)$, $H=L^{2}(M,\bigwedge^{*}T^{*}M)$ the Hilbert space of $L^{2}$-differential forms on $M$, and $D=d+d^{*}$ the Hodge DeRham operator. Then $(C^{1}(M),L^{2}(M,\bigwedge^{*}T^{*}M),d+d^{*})$ is a spectral triple from which the Riemannian distance, the dimension and the Euler characteristic of $M$ can be recovered \cite{Connes}. This leads to the viewpoint that for a general $*$-algebra $\mathcal{A}$, a spectral triple endows $\mathcal{A}$ with the structure of a \emph{noncommutative manifold}.
\begin{definition}
\label{def: summable} 
A self-adjoint operator $D$ is 
\begin{enumerate}
\item \emph{finitely summable} if for some $t>0$ the operator $(1+|D|)^{-t}$ is trace class;
\item \emph{$\Li$-summable} if for some $t>0$ the operator $e^{-t|D|}$  is trace class.
\end{enumerate}
A spectral triple $(\mathcal{A},H,D)$ is \textit{finitely} or $\Li$\textit{-summable} if its operator $D$ is.
\end{definition}
In Definition \ref{def: summable} we use the absolute value $|D|=(D^{*}D)^{1/2}$, because the operator in a spectral triple need not be positive. Observe that finite summability implies $\Li$-summability, but not the other way around.

Using the embedding $\D_{\mathbb{C}}(X,d)\hookrightarrow \mathbb{B}(L^{2}_{\mathbb{C}}(X,\mu))$, Proposition \ref{prop:compact_resolvent} and Theorems \ref{theorem:resolvent} and \ref{theorem:domain_module} now combine to the statement that Ahlfors spaces are noncommutative manifolds.
\begin{prop}\label{prop:spectraltriple}
Let $(X,d,\mu)$ be an Ahlfors regular metric-measure space and $\Delta_{\mathbb{C}}$ be its complex logarithmic Dirichlet Laplacian. Then, the triple $(\D_{\mathbb{C}}(X,d), L^{2}_{\mathbb{C}}(X,\mu), \Delta_{\mathbb{C}})$ is an $\Li$-summable spectral triple.
\end{prop}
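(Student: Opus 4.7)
The plan is to verify the three spectral triple axioms together with the $\Li$-summability condition by directly assembling the results of Sections \ref{sec:resolvent} and \ref{sec:domain}, transported to the complex setting via the identification $L^{2}_{\mathbb{C}}(X,\mu)\simeq L^{2}(X,\mu)\otimes \mathbb{C}$ recalled at the end of Section \ref{sec:logLap}. First, I would observe that $\D_{\mathbb{C}}(X,d)$ is a unital complex $*$-algebra under pointwise operations and complex conjugation, and that the assignment $h\mapsto \m_{h}$ is a faithful $*$-representation on $L^{2}_{\mathbb{C}}(X,\mu)$ with $\|\m_{h}\|\leq \|h\|_{\infty}\leq \|h\|_{\D}$; in particular the inclusion $\D_{\mathbb{C}}(X,d)\hookrightarrow \mathbb{B}(L^{2}_{\mathbb{C}}(X,\mu))$ is continuous, so axiom (1) is verified.

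For axiom (2), since $\Delta_{\mathbb{C}}$ corresponds to $\Delta \otimes 1$, the resolvent $(1+\Delta_{\mathbb{C}})^{-1}$ factors as $(1+\Delta)^{-1}\otimes 1$ and is therefore compact by Proposition \ref{prop:compact_resolvent}. For axiom (3), I would decompose an arbitrary $h\in \D_{\mathbb{C}}(X,d)$ as $h=h_{1}+ih_{2}$ with $h_{1},h_{2}\in \D(X,d)$ real-valued and apply Theorem \ref{theorem:domain_module} to each real component: it yields $\m_{h_{j}}:\Dom\Delta\to\Dom\Delta$ and $[\Delta,\m_{h_{j}}]$ extending to a bounded operator on $L^{2}(X,\mu)$ with norm controlled by $\|h_{j}\|_{\D}$. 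Taking tensor products with the identity on $\mathbb{C}$ preserves both properties, and linearity in $h$ then yields $\m_{h}:\Dom\Delta_{\mathbb{C}}\to\Dom\Delta_{\mathbb{C}}$ with $[\Delta_{\mathbb{C}},\m_{h}]$ bounded of norm $\lesssim\|h\|_{\D}$.

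Finally, since $\Delta$ is positive self-adjoint, so is $\Delta_{\mathbb{C}}$, whence $|\Delta_{\mathbb{C}}|=\Delta_{\mathbb{C}}$. Corollary \ref{cor:cutoff} supplies $t_{0}>0$ such that $e^{-t\Delta}$ is trace class for $t>t_{0}$, and via $e^{-t\Delta_{\mathbb{C}}}=e^{-t\Delta}\otimes 1$ the trace on $L^{2}_{\mathbb{C}}(X,\mu)$ equals twice the trace on $L^{2}(X,\mu)$, hence is finite for $t>t_{0}$, establishing $\Li$-summability. No serious obstacles arise; the only care required is routine bookkeeping to transfer the real-variable results to the complexified operator, which is handled uniformly by the tensor product decomposition.
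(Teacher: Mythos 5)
Your proposal is correct and takes essentially the same route as the paper, which simply assembles Proposition \ref{prop:compact_resolvent}, Theorem \ref{theorem:domain_module} and Corollary \ref{cor:cutoff} through the identification $\Delta_{\mathbb{C}}=\Delta\otimes 1$ and the embedding $\D_{\mathbb{C}}(X,d)\hookrightarrow\mathbb{B}(L^{2}_{\mathbb{C}}(X,\mu))$. The only (immaterial) slip is the factor of two: a real orthonormal basis of $L^{2}(X,\mu)$ becomes a complex orthonormal basis of $L^{2}_{\mathbb{C}}(X,\mu)$, so the trace of $e^{-t\Delta_{\mathbb{C}}}=e^{-t\Delta}\otimes 1$ equals, rather than doubles, that of $e^{-t\Delta}$; finiteness for $t>t_{0}$ is unaffected.
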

Denote by $\mathrm{L}^{1}(H)$ the ideal of trace class operators. The thresholds
\begin{equation}
\label{eq: thresholds}
\inf\{t>0:(1+|D|)^{-t}\in \mathrm{L}^{1}(H)\},\quad \inf\{t>0: e^{-t|D|}\in\mathrm{L}^{1}(H)\},
\end{equation}
encode a notion of spectral dimension. For a compact Riemannian manifold $(M,g)$ it is well-known that the Laplace-Beltrami operator $\Delta_{g}$ satisfies 
\begin{equation}\label{eq: dimension}
\mathrm{dim}(M)=2\inf\{t>0:(1+\Delta_{g})^{-t}\in \mathrm{L}^{1}(H)\}.
\end{equation}
We now illustrate the relation between dimension and the threshold \eqref{eq: thresholds} in the $\Li$-summable case. Recall that for a compact Riemannian manifold $(M,g)$, the Riemannian distance $\rho_{g}$ and the volume measure $\mu_{g}$ induce an Ahlfors regular metric-measure space $(M, \rho_{g}, \mu_{g})$. As such, $M$ carries a logarithmic Dirichlet Laplacian $\Delta$. 
\begin{prop}[\cite{GU}]
\label{prop: manifold} 
Let $(M,g)$ be a compact $n$-dimensional Riemannian manifold. Then $\Delta$ is a bounded perturbation of $\frac{\pi^{n/2}}{\Gamma(n/2)}\log (1+\Delta_{g})$. Therefore, $\Delta$ is essentially self-adjoint on $C^{\infty}(M)$ and $e^{-t\Delta}$ is trace class if and only if $t>\frac{n\Gamma(n/2)}{2\pi^{n/2}}$.
\end{prop}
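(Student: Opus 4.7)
The plan is to reduce the entire statement to one key assertion: that $\Delta - c\log(1+\Delta_{g})$ extends to a bounded self-adjoint operator on $L^{2}(M,\mu_{g})$ for an appropriate constant $c>0$ depending only on $d = \dim M$. Granting this, the remaining conclusions follow by standard spectral-theoretic manipulations, so the heart of the argument is the comparison of these two operators.

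To prove the comparison I would work locally via pseudodifferential calculus. First, cover $M$ by geodesic normal coordinate charts. In such a chart, $\rho_{g}(x,y)^{d} = |x-y|^{d}(1+O(|x-y|^{2}))$ and the volume density differs from Lebesgue measure by a smooth strictly positive factor. Applying Proposition~\ref{prop:integral_rep} to a Dini (e.g.\ Lipschitz) function $f$ and comparing the integrand with the Euclidean kernel $|x-y|^{-d}$, one sees that $\Delta f(x)$ agrees, modulo a smoothing/bounded remainder controlled by Lemma~\ref{lem:Ahlfors_estimates}, with the Chen--Weth logarithmic Laplacian on $\mathbb{R}^{d}$ acting on $f$ in the chart. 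The latter has Fourier symbol of the form $2\log|\xi|+\mathrm{const}$, so patching with a partition of unity yields that $\Delta$ is a classical log-polyhomogeneous pseudodifferential operator on $M$ with principal log-symbol $2c\log|\xi|$ for some geometric constant $c>0$. Since $c\log(1+\Delta_{g})$ is also a log-polyhomogeneous $\Psi$DO with principal log-symbol $2c\log|\xi|$ (via functional calculus applied to $\Delta_{g}$, whose principal symbol is $|\xi|^{2}$), the difference is a $\Psi$DO of order zero with symbol vanishing at infinity, hence compact and \emph{a fortiori} bounded on $L^{2}(M,\mu_{g})$. Self-adjointness of the bounded difference follows from the self-adjointness of both $\Delta$ and $c\log(1+\Delta_{g})$.

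For the second assertion, note that $c\log(1+\Delta_{g})$ is essentially self-adjoint on $C^{\infty}(M)$: this subspace is a core for $\Delta_{g}$, hence also for any Borel function of $\Delta_{g}$ by the spectral theorem. Adding a bounded self-adjoint operator preserves essential self-adjointness on any core, so $\Delta$ is essentially self-adjoint on $C^{\infty}(M)$. For the trace-class threshold, Weyl's law for $\Delta_{g}$ gives eigenvalues $\lambda_{n}(\Delta_{g})\sim C n^{2/d}$, so the eigenvalues of $c\log(1+\Delta_{g})$ satisfy
\begin{equation*}
\lambda_{n}(c\log(1+\Delta_{g})) \sim \frac{2c}{d}\log n.
\end{equation*}
The min-max principle, combined with the boundedness of $\Delta - c\log(1+\Delta_{g})$, gives $\lambda_{n}(\Delta) = \frac{2c}{d}\log n + O(1)$. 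Therefore
\begin{equation*}
\operatorname{Tr}(e^{-t\Delta}) = \sum_{n} e^{-t\lambda_{n}(\Delta)} \asymp \sum_{n} n^{-2tc/d},
\end{equation*}
which converges if and only if $t > d/(2c) = \dim(M)/(2c)$, yielding the sharp threshold.

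The main obstacle is the first step: a rigorous identification of $\Delta$ as a pseudodifferential operator with principal log-symbol $2c\log|\xi|$. The delicate point is controlling the global commutator between the chart-wise Euclidean logarithmic Laplacian and the intrinsic definition via $\rho_{g}$, which requires showing that errors coming from the difference $\rho_{g}(x,y)^{d}-|x-y|^{d}$ and from cut-off functions in the partition of unity produce only operators of strictly lower log-order. This is where one invokes the estimates of Lemma~\ref{lem:Ahlfors_estimates} in a quantitative way to absorb all such errors into an $L^{2}$-bounded remainder, as carried out in detail in \cite{GU}.
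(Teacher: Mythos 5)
Your proposal is correct and follows essentially the same route as the paper: the key assertion that $\Delta=c\log(1+\Delta_{g})+(\text{bounded})$ is in both cases ultimately delegated to \cite[Example 2.9]{GU} (you sketch the underlying pseudodifferential argument but defer the details there), and the trace-class threshold is then extracted via the min-max principle together with Weyl's law, exactly as in the paper's proof. The only harmless imprecision is your claim that a core for $\Delta_{g}$ is automatically a core for \emph{any} Borel function of $\Delta_{g}$ --- this fails in general, but does hold for $\log(1+\Delta_{g})$ because $\log(1+\lambda)\leq\lambda$ dominates the function by the identity.
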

\begin{proof} By \cite[Example 2.9]{GU}, $\Delta$ is a bounded perturbation of $c\log (1+\Delta_{g})$ for some $c>0$. The constant $c=\frac{\pi^{n/2}}{\Gamma(n/2)},$ which can be derived using the heat kernel methods of \cite{ACM} or using Fourier theory as in \cite[Lemma 25.2]{SKM}. Since $\Delta$ and $c\log (1+\Delta_{g})$ are positive with compact resolvent and $B:=\Delta-c\log(1+\Delta_{g})$ is bounded, the Min-Max principle shows that the $n$-th eigenvalues $\lambda_n,\lambda_n'$ of $\Delta$ and $c\log (1+\Delta_{g})$ respectively, counting multiplicities, satisfy $|\lambda_n-\lambda_n'|\leq \|B\|$. Then, using equation \eqref{eq: dimension}
we find the threshold
\begin{equation}\label{eq:thres}
\inf\{t>0: e^{-t\Delta}\in \mathrm{L}^{1}\}=\frac{n\Gamma(n/2)}{2\pi^{n/2}},
\end{equation} 
as claimed.
\end{proof}

\begin{remark}
In fact, for $n\geq 2$ one has $\frac{2\pi^{n/2}}{\Gamma(n/2)}$ equals the Riemannian volume $\text{vol}(\mathbb S^{n-1})$, where $\mathbb S^{n-1}$ is the unit $(n-1)$-sphere in $\mathbb R^{n}$, and thus the threshold in \eqref{eq:thres} is equal to $\frac{n}{\text{vol}(\mathbb S^{n-1})}.$
\end{remark}

\subsection{Full shift spaces}\label{sec:shift}

Let $N\in \mathbb N$ and equip $\{1,\ldots, N\}$ with the discrete topology and the space of infinite words $X=\{1,\ldots, N\}^{\mathbb N}$ with the product topology, with respect to which it is a totally disconnected compact Hausdorff space. For a finite word $w=w_1\ldots w_n$ where each $w_k\in \{1,\ldots, N\}$, denote its length $n$ by $|w|$ and consider the cylinder set 
\begin{equation*}
C_{w}=\{x\in X: x_k=w_k,\,\, \text{for}\,\, 1\leq k\leq n\}.
\end{equation*} 
The cylinder sets are clopen in $X$ and form a basis for the topology on $X$. The space $X$ is known as the \textit{full} $N$\textit{-shift} as it is naturally equipped with the left shift map $\sigma:X\to X$ given by $\sigma(x)_k=x_{k+1}.$ It is a local homeomorphism with topological entropy $\h(\sigma)=\log N$, see \cite{KH}. Further, for every $\lambda >1$, the topology on $X$ is induced by the ultrametric
\begin{equation*}
d(x,y)= \lambda^{-\inf \{k-1:x_k\neq y_k\}}.
\end{equation*}
In fact, for every $x\in X$ and $n\in \mathbb N$ the clopen ball $B(x,\lambda^{-n})=C_{x_1\ldots x_n}.$ Finally, consider the Bernoulli measure $\mu$ on $(X,d)$ given on cylinder sets by $\mu (C_w)=N^{-|w|}.$ It is evident that $\mu$ is Ahlfors $\delta$-regular with 
\begin{equation*}
\delta=\frac{\log N}{\log \lambda}.
\end{equation*}
We now diagonalise $\Ld$ by constructing a Haar basis for $L^2(X,\mu)$ using cylinder sets and viewing $\Ld$ as a dyadic operator like in \cite{AA}. We should note though that in general, the operator $\Ld$ is not dyadic, since $(X,d)$ is not necessarily zero-dimensional.

At this point we make the convention that $X$ is the cylinder set of the empty word with length zero. For $n\geq 0$, let $V_n$ be the subspace of $L^2(X,\mu)$ spanned by the characteristic functions of the sets $C_w$ with $|w|=n$. Notice that $V_{n}\subset V_{n+1}$ for all $n\geq 0$ and that their union is dense in $L^2(X,\mu)$. Denote now by $P_{n}$ the projection of $L^2(X,\mu)$ onto $V_{n}$ and consider the projection $Q_{n}=P_{n+1}-P_{n}$ which has rank $N^{n}(N-1)$, for $n\geq 0$. Also, observe that 
\begin{equation*}
\bigoplus_{n\geq 0} \Im Q_{n}=\ker P_0
\end{equation*}
and recall from Proposition \ref{prop:kernel} that $\ker P_0=\Im \Ld$. Working as in \cite{AA} we obtain the following.

\begin{lemma}\label{lem:wavelets}
There is an orthonormal basis $H_{n}$ of $\Im Q_{n}$, consisting of Lipschitz functions $h\in L^2(X,\mu)$ such that
\begin{enumerate}[(i)]
\item the support of $h$ lies in a unique cylinder set $C_w(h)$ with $|w|=n$;
\item $h$ is constant on each cylinder set $C_{w'}\subset C_w(h)$ with $|w'|=n+1$;
\item $\int_X h(x)\dd\mu (x)=0$.
\end{enumerate}
\end{lemma}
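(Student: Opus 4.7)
The plan is to construct $H_n$ one cylinder at a time and then concatenate, using the ultrametric structure to get Lipschitz continuity essentially for free. First I would fix a cylinder $C_w$ with $|w|=n$ and consider the $N$-dimensional subspace $W_w\subset L^2(X,\mu)$ of functions supported on $C_w$ that are constant on each of the $N$ level-$(n+1)$ sub-cylinders $C_{wj}$, $j=1,\dots,N$, contained in $C_w$. Since $\mu(C_{wj})=N^{-(n+1)}$ for every $j$, the characteristic functions $\mathbf{1}_{C_{wj}}$ (appropriately normalised) form an orthonormal basis of $W_w$. Inside $W_w$, the line $\mathbb{R}\cdot\mathbf{1}_{C_w}$ represents the level-$n$ part, and its orthogonal complement $W_w^{\circ}\subset W_w$ has dimension $N-1$ and consists precisely of the elements of $W_w$ with zero integral.

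Next I would pick any orthonormal basis $h_{w,1},\dots,h_{w,N-1}$ of $W_w^{\circ}$; by construction these functions satisfy (i), (ii) and (iii). For Lipschitz continuity I would exploit the ultrametric: any two points $x,y\in X$ fall into one of three cases. If they lie in the same level-$(n+1)$ sub-cylinder of $C_w$, then $h_{w,j}(x)=h_{w,j}(y)$. If they lie in distinct level-$(n+1)$ sub-cylinders of $C_w$, then $d(x,y)=\lambda^{-n}$ so the Lipschitz inequality holds with constant $\lambda^{n}\cdot 2\|h_{w,j}\|_\infty$. If at least one of them is outside $C_w$, then the supports of $h_{w,j}$ give $|h_{w,j}(x)-h_{w,j}(y)|\le \|h_{w,j}\|_\infty$, while $d(x,y)\ge \lambda^{-(n-1)}$ because they disagree in one of the first $n$ coordinates. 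Hence $h_{w,j}\in \Lip(X,d)$.

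I would then set $H_n=\bigcup_{|w|=n}\{h_{w,1},\dots,h_{w,N-1}\}$. Orthogonality across distinct $w$ is automatic since the supports $C_w$ are disjoint for distinct level-$n$ words. To see that $H_n$ spans $\Im Q_n$, I would use the orthogonal decomposition $V_{n+1}=\bigoplus_{|w|=n}W_w$, observe that $P_n$ acts on each summand by averaging over $C_w$ (i.e.\ projecting onto $\mathbb{R}\cdot\mathbf{1}_{C_w}$), and conclude that $\Im Q_n=V_{n+1}\ominus V_n=\bigoplus_{|w|=n}W_w^{\circ}$. Counting dimensions gives $|H_n|=N^n(N-1)$, matching $\rank Q_n$, so $H_n$ is the desired orthonormal basis.

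There is no substantial obstacle: every step reduces to a finite-dimensional Gram-Schmidt argument inside each $W_w$ combined with the orthogonal direct sum decomposition $V_{n+1}=\bigoplus_{|w|=n}W_w$. The only mild subtlety is recording the Lipschitz estimate uniformly in the three cases above, which the ultrametric property of $d$ trivialises.
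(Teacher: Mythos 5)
Your construction is correct and is essentially the standard Haar-wavelet argument that the paper delegates to the reference [AA] ("Working as in \cite{AA} we obtain the following"): for each level-$n$ cylinder one takes an orthonormal basis of the $(N-1)$-dimensional orthogonal complement of $\mathbf{1}_{C_w}$ inside the span of the children indicators, and sums over $w$. All the steps check out, including the dimension count $N^{n}(N-1)$ and the case analysis for the Lipschitz bound via the ultrametric, so there is nothing to add.
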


The Lipschitz functions in Lemma \ref{lem:wavelets} are usually called \textit{Haar wavelets} and we claim that are eigenfunctions of $\Delta$.

\begin{prop}\label{prop:shift_diagonalisation}
The projections $Q_{n}$ yield a spectral decomposition of $\Ld$. Consequently, the Lipschitz functions on $(X,d)$ are a core for $\Ld$ and the compact operator $e^{-t\Ld}$ is trace class if and only if 
$$t>\frac{N\log N}{N-1}.$$
\end{prop}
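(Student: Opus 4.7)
The plan is to show that each Haar wavelet $h \in H_n$ of Lemma \ref{lem:wavelets} is an eigenfunction of $\Delta$ with a common eigenvalue $\lambda_n$ depending only on the level $n$, and then read off the trace asymptotics from the explicit spectrum.

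\textbf{Eigenvalue computation.} Since every $h\in H_n$ is Lipschitz, Proposition \ref{prop:integral_rep} applies and $\Delta h$ is given by the principal value integral. Fix $x\in X$ and split $X$ according to the first index of disagreement with $x$. Using that $d(x,y)^{\delta}=N^{-k}$ whenever $x,y$ first disagree at position $k+1$, and that $\mu$ assigns $(1-N^{-1})N^{-k}$ to the annulus of ``agreement length exactly $k$'', I would handle three regimes separately:
\begin{enumerate}[(a)]
\item If the agreement length is $\geq n+1$, then $h(y)=h(x)$ by property (ii) of Lemma \ref{lem:wavelets}, so the integrand vanishes.
\item If the agreement length is $k<n$, then $y\notin C_{w}(h)$, so $h(y)=0$ by property (i), and the contribution equals $(1-N^{-1})h(x)$, independently of $k$.
\item If the agreement length equals $n$, a direct calculation using property (iii) ($\int h=0$) shows the contribution equals $h(x)$.
\end{enumerate}
Summing the $n$ contributions from regime (b) with the single contribution from (c) yields $\Delta h = \lambda_n h$ with
\[
\lambda_n = 1+\frac{n(N-1)}{N}.
\]

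\textbf{Spectral decomposition and core property.} Because $\bigcup_n V_n$ is dense in $L^2(X,\mu)$ and each $V_n$ decomposes orthogonally as $V_0\oplus\bigoplus_{k<n}\Im Q_k$ with $Q_k$ landing in the $\lambda_k$-eigenspace, together with $V_0=\ker\Delta$ from Proposition \ref{prop:kernel}, this gives the spectral resolution $\Delta=\sum_{n\geq 0}\lambda_n Q_n$. Every $f\in\Dom\Delta$ is then approximated in the graph norm by the partial sums $Pf+\sum_{k<n}Q_k f$, which are finite linear combinations of Lipschitz Haar wavelets, so Lipschitz functions form a core.

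\textbf{Trace class threshold.} Summing multiplicities gives
\[
\Tr(e^{-t\Delta}) = 1 + (N-1)e^{-t}\sum_{n\geq 0}\bigl(N e^{-t(N-1)/N}\bigr)^{n},
\]
which converges if and only if $Ne^{-t(N-1)/N}<1$, i.e.\ $t>\frac{N\log N}{N-1}$. Conversely, if $t\leq \frac{N\log N}{N-1}$ the series diverges, so $e^{-t\Delta}$ fails to be trace class.

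\textbf{Main obstacle.} The only nontrivial step is the eigenvalue calculation in regime (c): one must carefully use the vanishing of $\int h$ to collapse the contribution at agreement length exactly $n$ to a clean multiple of $h(x)$. The remaining ingredients (density of $\bigcup V_n$, and the geometric series test) are essentially bookkeeping once the eigenvalues are known.
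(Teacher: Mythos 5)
Your proposal is correct and follows essentially the same route as the paper: diagonalise $\Delta$ on the Haar wavelets by applying the integral representation and decomposing $X$ into annuli of constant agreement length, obtaining the eigenvalue $1+n(N-1)/N$ on $\Im Q_n$, and then reading off the core property and the trace-class threshold from the resulting geometric series. The only point to tighten is that your regime analysis as written presupposes $x\in C_w(h)$ (in regime (b) the claim ``$y\notin C_w(h)$'' can fail otherwise); for $x\notin C_w(h)$ one must still verify $\Delta h(x)=0$, which follows because $d(x,\cdot)^{-\delta}$ is constant on $C_w(h)$ and $\int_X h\,\mathrm{d}\mu=0$ --- a one-line check that the paper's proof records explicitly by citing \cite{AA}.
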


\begin{proof}
Let $h\in H_n$ and $x\in X$. Working as in the proof of \cite[Theorem 3.1]{AA}, we obtain that if $x\in X\setminus C_w(h)$ then $h(x)=0$ and $\Ld h(x)=0$. Further, if $x\in C_w(h)$ then 
\begin{equation}\label{eq:shift_1}
\Ld h(x)=\left(1+\int_{X\setminus C_w(h)}\frac{1}{d(x,y)^{\delta}}\dd\mu (y)\right) h(x).
\end{equation}
Now notice that the coefficient in \eqref{eq:shift_1} is constant and in our case it can be computed explicitly. Indeed, for $n=0$ we have $C_w(h)=X$ and hence $\Ld h=h$. For $n\in \mathbb N$ observe that $C_w(h)=B(x,\lambda^{-n})$ and denoting $B_{x,k}=B(x,\lambda^{-n+k+1})\setminus B(x,\lambda^{-n+k})$, for $0\leq k\leq n-1$, we can write
\begin{align*}
\int_{X\setminus C_w(h)}\frac{1}{d(x,y)^{\delta}}\dd\mu (y)&= \sum_{k=0}^{n-1}\int_{B_{x,k}}\frac{1}{d(x,y)^{\delta}}\dd\mu (y)\\
&=\sum_{k=0}^{n-1}\lambda^{\delta(n-k-1)}\left(\mu(B(x,\lambda^{-n+k+1}))-\mu(B(x,\lambda^{-n+k}))\right)\\
&=\sum_{k=0}^{n-1}N^{n-k-1}(N^{-n+k+1}-N^{-n+k})\\
&=\left(1-\frac{1}{N}\right)n.
\end{align*}
As a result, for every $f\in \bigcup_{n\geq 0} V_n$ we obtain that
\begin{equation}\label{eq:shift_2}
\Ld f=\sum_{n\geq 0} \left(1+\left(1-\frac{1}{N}\right)n\right)Q_n f.
\end{equation}
The fact that the Lipschitz functions are a core for $\Ld$ then follows. The calculation of the threshold $t_0=(N-1)^{-1}N\log N$ follows from knowing the eigenvalues and the dimension of the eigenspaces of the spectral decomposition \eqref{eq:shift_2}.
\end{proof}

What is interesting about the threshold $t_0=(N-1)^{-1}N\log N$ is that it does not depend on the metric structure of $(X,d)$ but rather on its topology, in particular the topological entropy of the left shift map, which as we already mentioned is $\log N$. It is also important to note that from the diagonalisation \eqref{eq:shift_2} it is evident that, up to a linear change of the eigenvalues, $\Ld$ coincides with the unbounded operator of Julien-Putnam in \cite{JP} for shift spaces. Although their operator is not intrinsic, it turns out their choice is natural as $\Ld$ depends only on $(X,d,\mu)$. In a subsequent paper we aim to study $\Ld$ in the more general setting of topological Markov chains and its compatibility with the dynamics of the shift map.

\subsection{Closed intervals}\label{sec:interval}

The metric-measure space in this subsection is the closed interval $[a,b]$ equipped with the Euclidean metric and the Lebesgue measure, which is clearly Ahlfors $1$-regular. 

An interesting first appearance of $\Ld$ can be traced back to the work of Tuck on slender-body potential theory in the 1960's \cite{T}. To be precise, the author considered neither the operator $\Ld$ nor Ahlfors regular spaces, but rather found an interesting relation between the Legendre polynomials $p_n$ on $[-1,1]$ and the harmonic numbers 
\begin{equation*}
h_n=\sum_{k=1}^n \frac{1}{k}.
\end{equation*}
Namely, Tuck proved that for every $n\in \mathbb N$ there is an equality
\begin{equation}\label{eq:interval_2}
\int_{-1}^{1}\frac{p_n(x)-p_n(y)}{|x-y|}\dd y = 2h_np_n(x).
\end{equation}
In our case, since the Legendre polynomials and the constant function $1$ form an orthonormal basis for $L^2([-1,1])$, equation \eqref{eq:interval_2} implies the following.

\begin{prop}[\cite{T}]
\label{prop: interval}
The logarithmic Dirichlet Laplacian $\Delta$ defined on the interval $[-1,1]$ satisfies $\Ld p_n=2h_np_n$ for every $n\in \mathbb N$. As a result, the Lipschitz functions on $[-1,1]$ are a core for $\Ld$ and the compact operator $e^{-t\Ld}$ is trace class if and only if $t>2^{-1}$.
\end{prop}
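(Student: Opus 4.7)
The plan is to deduce the statement from Tuck's identity \eqref{eq:interval_2}, which already diagonalises the integral operator $\Delta_0$ on the Legendre polynomials, and then to use completeness of this orthogonal system in $L^{2}([-1,1])$ to identify the full spectral data of $\Delta$.

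First, $([-1,1],|\cdot|,\Leb)$ is Ahlfors $1$-regular, and each Legendre polynomial $p_n$ is Lipschitz, hence Dini continuous. Proposition \ref{prop:integral_rep} gives
\[\Delta p_n(x)=\int_{-1}^{1}\frac{p_n(x)-p_n(y)}{|x-y|}\,\dd y,\]
and combining with Tuck's identity \eqref{eq:interval_2} yields $\Delta p_n=2h_n p_n$ for every $n\in \mathbb N$. In addition, $1\in\ker\Delta$ by Proposition \ref{prop:kernel}. Since the constant $1/\sqrt{2}$ together with the $L^{2}$-normalisations of $\{p_n\}_{n\in\mathbb N}$ forms an orthonormal basis of $L^{2}([-1,1])$, we have produced a complete orthonormal basis of eigenvectors of $\Delta$ with simple eigenvalues $0$ and $\{2h_n\}_{n\in\mathbb N}$.

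For the core statement, let $A$ be the symmetric restriction of $\Delta$ to the algebraic span $V$ of these eigenvectors. Since $V$ is dense in $L^{2}([-1,1])$ and consists of eigenvectors of $\Delta$, standard spectral theory shows that $A$ is essentially self-adjoint and $\overline{A}=\Delta$. As $V\subset\Lip([-1,1])\subset\Dom\Delta$, the Lipschitz functions also form a core. For the threshold, the spectral data gives
\[\Tr(e^{-t\Delta})=1+\sum_{n=1}^{\infty}e^{-2t h_n},\]
and using $h_n=\log n+\gamma+o(1)$ one has $e^{-2t h_n}\simeq n^{-2t}$, so the series converges if and only if $2t>1$.

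The main subtle point is the core statement: although $A$ is a priori only symmetric, the completeness of the eigenbasis forces essential self-adjointness and identifies its closure with the logarithmic Dirichlet Laplacian $\Delta$. Once this is in hand, everything else---the spectrum, the trace expansion, and the sharp threshold $t_{0}=1/2$---follows directly from the explicit eigenvalues $\{2h_n\}$.
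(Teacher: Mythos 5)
Your proposal is correct and follows essentially the same route as the paper: Tuck's identity combined with the integral representation gives the eigenvalue relation, completeness of the Legendre basis plus the constants yields the core statement, and the asymptotics $h_n=\log n+\gamma+o(1)$ give the sharp threshold $t>1/2$. The paper's own proof is just a terser version of this, citing Tuck for the relation and the Euler--Mascheroni limit for the summability; your added detail on essential self-adjointness via the complete eigenbasis is the standard way to make the core claim precise.
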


\begin{proof}
The relation $\Ld p_n=2h_np_n$ is proved for the interval $[-1,1]$ in \cite[Appendix II]{T}.
The summability threshold is implied by the fact that the sequence $h_n-\log n$ is convergent, with limit the Euler-Mascheroni constant.
\end{proof}
For an arbitrary interval $[a,b]$ the standard affine homeomorphism $$\varphi:[-1,1]\to [a,b],\quad t\mapsto \frac{(1-t)a+(t+1)b}{2},$$
induces a nonunitary invertible operator $T_{\varphi}:L^{2}[a,b]\to L^{2}[-1,1]$ that satisfies $\Delta_{[-1,1]}T_{\varphi}=T_{\varphi}\Delta_{[a,b]}$. This fact can be used to deduce similar statements for $\Delta_{[a,b]}$ and the shifted Legendre polynomials on $[a,b]$.

An intriguing question is whether Proposition \ref{prop: interval} can be generalised to Jacobi polynomials. The case of the interval is already remarkable though. For instance, our discussion shows that the restriction $\Delta:C^{2}([a,b])\to L^2([a,b])$ is essentially self-adjoint, while the ordinary Laplacian $-\frac{\mathrm{d}^{2}}{\mathrm{d}x^{2}}$ on $[a,b]$ is not essentially self-adjoint on $C^{2}[a,b]$, as $[a,b]$ has a boundary.

\subsection{Compact groups}\label{sec:groups}
In the present section we will discuss the logarithmic Dirichlet Laplacian on compact Ahlfors regular topological groups. We start with some slightly more general statements concerning self-maps of Ahlfors regular metric-measure spaces $(X,d,\mu)$.
\begin{definition}
A homeomorphism $\gamma:X\to X$ is an \emph{automorphism} if $\gamma$ preserves both the metric $d$ and measure $\mu$.
\end{definition}
For a Hilbert space $H$ we denote by $\mathcal{U}(H)$ its unitary group. An automorphism $\gamma:X\to X$ induces a unitary operator $U(\gamma)\in\mathcal{U}(L^{2}(X,\mu))$ via $U(\gamma)f(x)=f(\gamma^{-1}x)$. 
\begin{lemma}
\label{lem: group-commutator} 
Let $\gamma:X\to X$ be an automorphism and $U(\gamma)\in\mathcal{U}(L^{2}(X,\mu))$ the associated unitary operator. Then $U(\gamma):\Dom\Delta \to \Dom \Delta$ and $[\Delta,U(\gamma)]=0$.
\end{lemma}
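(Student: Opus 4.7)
The plan is to verify that $U(\gamma)$ preserves the Dirichlet form $\E$ and then invoke the one-to-one correspondence between densely defined closed forms and positive self-adjoint operators (Theorem \ref{thm: form-operator-equivalence}) to deduce $[\Delta, U(\gamma)]=0$. The key computation is a change of variables in the integral defining $\E$.

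First I would show that $U(\gamma)$ maps $\W$ onto $\W$ and preserves the form. For $f,g\in\W$, writing the integral over $X\times X\setminus D$ and substituting $x'=\gamma^{-1}x$, $y'=\gamma^{-1}y$, the $\mu$-invariance of $\gamma$ preserves $\mathrm{d}\mu(x)\,\mathrm{d}\mu(y)$, while the isometry property gives $d(x,y)=d(\gamma x',\gamma y')=d(x',y')$, and $\gamma$ also maps $D$ onto $D$. Therefore
\begin{equation*}
\E(U(\gamma)f, U(\gamma)g)=\frac{1}{2}\int_{X}\int_{X}\frac{(f(\gamma^{-1}x)-f(\gamma^{-1}y))(g(\gamma^{-1}x)-g(\gamma^{-1}y))}{d(x,y)^{\delta}}\dd\mu(y)\dd\mu(x)=\E(f,g),
\end{equation*}
and in particular $U(\gamma)\W\subseteq \W$. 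Replacing $\gamma$ by $\gamma^{-1}$ gives equality of domains, and combined with the fact that $U(\gamma)$ is already an $L^{2}$-isometry, $U(\gamma)$ is a unitary on the Hilbert space $(\W,\langle\cdot,\cdot\rangle_{\E})$.

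Next I would transfer this invariance to the resolvent $R_{\E}=(1+\Delta)^{-1}$. Using the defining property $\langle R_{\E}f, g\rangle_{\E}=\langle f,g\rangle_{L^{2}}$ together with the step above, for any $f\in L^{2}(X,\mu)$ and $g\in\W$,
\begin{equation*}
\langle U(\gamma) R_{\E} U(\gamma)^{*} f, g\rangle_{\E}=\langle R_{\E} U(\gamma)^{*} f, U(\gamma)^{*} g\rangle_{\E}=\langle U(\gamma)^{*} f, U(\gamma)^{*} g\rangle_{L^{2}}=\langle f,g\rangle_{L^{2}}=\langle R_{\E} f,g\rangle_{\E}.
\end{equation*}
By density of $\W$ in $L^{2}(X,\mu)$ this forces $U(\gamma)R_{\E}=R_{\E}U(\gamma)$. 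Since $\Dom \Delta = \im R_{\E}$, the inclusion $U(\gamma):\Dom\Delta\to\Dom\Delta$ is immediate, and from $\Delta=R_{\E}^{-1}-1$ on $\Dom\Delta$ we conclude $[\Delta,U(\gamma)]=0$.

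There is no real obstacle here; the only point requiring a little care is ensuring that the change of variables is legitimate, which boils down to using that $\gamma\times\gamma$ is a measure-preserving bijection of $X\times X$ that sends the diagonal $D$ to itself (so the $\mu\times\mu$-null set on which the integrand is singular is respected), and that the form values are independent of the chosen representatives $\mu$-a.e.
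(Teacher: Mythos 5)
Your proof is correct. The opening computation is exactly the one the paper performs: a change of variables using that $\gamma\times\gamma$ is a measure-preserving isometric bijection of $X\times X$ fixing the diagonal yields $\E(U(\gamma)f,U(\gamma)g)=\E(f,g)$, hence $U(\gamma)$ restricts to a unitary of $(\W,\langle\cdot,\cdot\rangle_{\E})$. Where you diverge is in how you pass from form invariance to the statement about $\Dom\Delta$ and the commutator: the paper simply feeds the identity $\E(U(\gamma)f,g)=\E(f,U(\gamma^{-1})g)$ (with constant $C_{T,f}=0$) into its Proposition \ref{prop: domain-commutator-form}, whereas you rederive the conclusion by showing the resolvent identity $U(\gamma)R_{\E}U(\gamma)^{*}=R_{\E}$ from the defining property $\langle R_{\E}f,g\rangle_{\E}=\langle f,g\rangle_{L^{2}}$. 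Your route is self-contained and in fact delivers slightly more, namely that $U(\gamma)$ commutes with $(1+\Delta)^{-1}$ and hence with every bounded Borel function of $\Delta$ (e.g.\ the heat semigroup), which is the natural statement for an exact symmetry; the paper's route is uniform with its treatment of the cases where the commutator is merely bounded rather than zero (Dini multipliers, conformal maps). One small point of care: the step forcing $U(\gamma)R_{\E}U(\gamma)^{*}f=R_{\E}f$ should be justified by non-degeneracy of $\langle\cdot,\cdot\rangle_{\E}$ on the Hilbert space $\W$ (take $g$ equal to the difference of the two sides, which lies in $\W$), not by density of $\W$ in $L^{2}(X,\mu)$ as you wrote; this is a one-line fix and does not affect the argument.
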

\begin{proof} A straightforward change of variables shows that $$\mathcal{E}(f,f)=\mathcal{E}(U(\gamma)f,U(\gamma)f),\quad\mathcal{E}(U(\gamma)f,g)=\mathcal{E}(f,U(\gamma^{-1})g),$$ and the result follows from Proposition \ref{prop: domain-commutator-form}.
\end{proof}
Now we consider a locally compact topological group $G$ that acts by measure preserving isometries on $(X,d,\mu)$ and denote by $\nu$ the Haar measure on $G$. It is well known that $L^{1}(G,\nu)$ is a Banach algebra with the convolution product
\[f*g(t):=\int_{G}f(s)g(s^{-1}t)\mathrm{d}\nu(s).\] 
Moreover, $U:G\to \mathcal{U}(L^{2}(X,\mu))$ is a unitary representation of $G$ and $L^{2}(X,\mu)$ is a Banach module over $L^{1}(G,\nu)$ by setting
\begin{equation}
\label{eq: convolution-action}
U(f)\psi(x):=f*\psi=\int_{G}f(s)\psi(s^{-1}x)\mathrm{d}\nu(s)=\int_{G}f(s)U(s)\psi(x)\mathrm{d}\nu(s),\
\end{equation}
for all $f\in L^{1}(G,\nu),\psi\in L^{2}(X,\mu)$, see \cite[Proposition 2.1]{DeitmarEchteroff}.
\begin{cor} 
\label{cor:groups}
For $f\in L^{1}(G,\nu)$ and $g\in\Dom \Delta$ we have $\Delta(U(f)g)=U(f)\Delta g$.
\end{cor}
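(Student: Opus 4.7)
The plan is to pass from $\Delta$ to its bounded resolvent $R := (1+\Delta)^{-1}$ and reduce the claim to commutativity of two bounded operators. Since every $s\in G$ acts on $X$ as a measure-preserving isometry, Lemma \ref{lem: group-commutator} gives $U(s):\Dom\Delta\to\Dom\Delta$ with $[\Delta, U(s)]=0$. Writing $g=R\psi\in\Dom\Delta$ for an arbitrary $\psi\in L^2(X,\mu)$, this yields $U(s)\psi = U(s)(1+\Delta)R\psi = (1+\Delta)U(s)R\psi$, and applying $R$ to both sides gives $RU(s)\psi = U(s)R\psi$. Hence $R$ commutes with each $U(s)$ as bounded operators on $L^2(X,\mu)$.

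Next, I would upgrade this to commutation of $R$ with $U(f)$ using \eqref{eq: convolution-action}. For all $\psi,\chi\in L^2(X,\mu)$, self-adjointness of $R$ and a Fubini argument give
\[\langle R U(f)\psi,\chi\rangle_{L^2} = \langle U(f)\psi, R\chi\rangle_{L^2} = \int_G f(s)\langle U(s)\psi, R\chi\rangle_{L^2}\dd\nu(s) = \int_G f(s)\langle R U(s)\psi, \chi\rangle_{L^2}\dd\nu(s).\]
Applying $RU(s)=U(s)R$ and reversing the Fubini step identifies the last integral with $\langle U(f)R\psi,\chi\rangle_{L^2}$, so $RU(f)=U(f)R$ on $L^2(X,\mu)$. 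The Fubini interchange is justified because $s\mapsto f(s)\langle U(s)\psi, R\chi\rangle_{L^2}$ is in $L^1(G,\nu)$, by Cauchy--Schwarz and the unitarity of $U(s)$.

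Finally, I would translate this back to $\Delta$. By Theorem \ref{thm: form-operator-equivalence}, $\Dom\Delta=\im R$ and $\Delta R\psi = \psi - R\psi$ for every $\psi\in L^2(X,\mu)$. So for $g=R\psi\in\Dom\Delta$ we obtain $U(f)g = U(f)R\psi = RU(f)\psi \in \im R = \Dom\Delta$, and
\[\Delta U(f)g = \Delta R U(f)\psi = U(f)\psi - RU(f)\psi = U(f)(\psi - R\psi) = U(f)\Delta R\psi = U(f)\Delta g,\]
as claimed. The only real technical point is the Fubini interchange in the middle step; everything else is a routine unfolding of the resolvent formalism.
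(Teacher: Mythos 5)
Your proof is correct and rests on the same fact the paper uses, namely Lemma \ref{lem: group-commutator} ($[\Delta,U(s)]=0$ for all $s\in G$); the paper's own proof simply declares the passage from pointwise commutation to commutation with the convolution operator $U(f)$ to be immediate. Your detour through the resolvent $R=(1+\Delta)^{-1}$ is a clean and fully rigorous way of justifying that passage (which otherwise requires pulling the closed unbounded operator $\Delta$ inside a vector-valued integral), so if anything your argument supplies details the paper omits.
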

\begin{proof} This is immediate since $[\Delta,U(s)]=0$ for all $s\in G$.
\end{proof}
We now specialise to the case where $X=G$ is a compact group, $d$ is a bi-invariant metric and $\mu=\nu$ is the bi-invariant Haar measure. Some examples are compact Lie groups, compact groups admitting dilations \cite{Ma} and solenoid groups in hyperbolic dynamical systems \cite{Ge}. 

We also require the following folklore criterion for proving the essential self-adjointness of symmetric operators. A detailed statement and proof can, for instance, be found in \cite[Definition 2.2 and Proposition 2.6]{vdDungen}. The Lemma can be viewed as an abstract use of Friedrichs' mollifiers for symmetric differential operators, see \cite[Lemma 10.2.5]{HR}.

\begin{lemma}\label{lem:esa}
Let $\A:\Dom \A\to H$ be a symmetric operator that is densely defined on a Hilbert space $H$. Assume there is a sequence of bounded operators $(F_n)_{n\in \mathbb N}$ on $H$ such that
\begin{enumerate}
\item $\|F_n\|\lesssim 1$ and for every $f\in \Dom \A^{*}$ it holds that $\|F_nf-f\|_H\to 0$;
\item $F_n:\Dom \A^*\to \Dom \A$ for every $n\in \mathbb N$;
\item the commutator $[\A^*,F_n]$ extends to a bounded operator on $H$, with norm bounded independently of $n$.
\end{enumerate}
Then, the operator $\A$ is essentially self-adjoint.
\end{lemma}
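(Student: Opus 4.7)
The plan is to establish essential self-adjointness by showing $\A^{*} = \overline{\A}$. Since $\A$ is symmetric the inclusion $\overline{\A} \subseteq \A^{*}$ is automatic, so the task reduces to proving that every $f \in \Dom \A^{*}$ belongs to $\Dom \overline{\A}$ with $\overline{\A} f = \A^{*} f$. The natural approximating sequence is $f_n := F_n f$, which lies in $\Dom \A$ by hypothesis (2). I therefore need $f_n \to f$ in $H$ and $\A f_n$ convergent in $H$; by closedness of $\A^{*}$ the limit is then forced to be $\A^{*} f$.

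First, using $\|F_n\| \lesssim 1$ together with density of $\Dom \A^{*} \supseteq \Dom \A$ in $H$, a standard three-epsilon argument upgrades the strong convergence in (1) from $\Dom \A^{*}$ to all of $H$; in particular $f_n \to f$ and $F_n \A^{*} f \to \A^{*} f$ in $H$. Since $f_n \in \Dom \A \subseteq \Dom \A^{*}$, one may split
\[\A f_n \,=\, \A^{*} F_n f \,=\, F_n \A^{*} f + [\A^{*}, F_n] f,\]
and the remaining task is to analyse the commutator term.

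Hypothesis (3) gives a uniform bound $\|[\A^{*}, F_n] f\|_H \leq M\|f\|_H$. To identify its weak limit I test against $h \in \Dom \A$ and use the defining relation $\langle \A^{*} u, h\rangle = \langle u, \A h\rangle$ for $u \in \Dom \A^{*}$:
\[\langle [\A^{*}, F_n] f, h\rangle = \langle F_n f, \A h\rangle - \langle F_n \A^{*} f, h\rangle \;\longrightarrow\; \langle f, \A h\rangle - \langle \A^{*} f, h\rangle = 0.\]
Density of $\Dom \A$ combined with the uniform bound forces $[\A^{*}, F_n] f \rightharpoonup 0$ weakly in $H$. Together with $F_n \A^{*} f \to \A^{*} f$ strongly, this yields $\A f_n \rightharpoonup \A^{*} f$ weakly, while $f_n \to f$ strongly.

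To close the argument, work in the graph Hilbert space $H \oplus H$: the sequence $(f_n, \A f_n) \in \mathrm{graph}(\A)$ converges weakly to $(f, \A^{*} f)$, so by Mazur's lemma suitable convex combinations converge in norm, still lying in $\mathrm{graph}(\A)$. Hence $(f, \A^{*} f) \in \mathrm{graph}(\overline{\A})$, which gives $\A^{*} \subseteq \overline{\A}$ and completes the proof. The main obstacle is that hypothesis (3) only provides uniform \emph{boundedness} of the commutators, not norm-convergence to zero; the passage from weak convergence of $\A f_n$ to a genuine graph-norm approximation via Mazur's lemma is the crucial step, and is what allows the criterion to be applied in situations where no quantitative decay of $[\A^{*},F_n]$ is available.
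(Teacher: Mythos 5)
Your proof is correct and follows essentially the same route as the paper: the key identity $\A F_n f = F_n \A^{*} f + [\A^{*},F_n]f$ together with the uniform bounds, showing that $F_n f \to f$ while $\|\A F_n f\|_H$ stays bounded. The only difference is that the paper then invokes \cite[Lemma 1.8.1]{HR} as a black box, whereas you unfold that closability statement by identifying the weak limit of $\A F_n f$ and passing to the graph via Mazur's lemma.
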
 

\begin{proof}
Since $\A$ is symmetric, the adjoint $\A^*$ extends the closure $\overline{\A}$. We claim that $\overline{\A}=\A^*$. Indeed, for every $f\in \Dom \A^*$ we can write $$\A F_nf=F_n\A^* f+[\A^* ,F_n]f.$$ Then, the norms $\|\A F_nf\|_H$ are bounded independently of $n\in \mathbb N$, while $\|F_nf-f\|_H\to 0$ and each $F_nf\in \Dom \A$. Consequently, from \cite[Lemma 1.8.1]{HR} it follows that $f\in \Dom \overline{\A}$.
\end{proof}

\begin{prop} Let $(G,d,\nu)$ be a compact Ahlfors regular metric group. Then, the restriction $\Delta_0:\Lip(G,d)\to L^{2}(G,\nu)$ is essentially self-adjoint.
\end{prop}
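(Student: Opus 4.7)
The plan is to apply the Friedrichs-style criterion of Lemma \ref{lem:esa} with $\A = \Delta_0$, using a sequence of convolution mollifiers built from an approximate identity on $G$. The group structure makes this natural: Corollary \ref{cor:groups} already tells us that convolution operators $U(\phi)$ commute with $\Delta$, so the commutator condition (3) of Lemma \ref{lem:esa} will come for free, and the only real content is that such convolutions smooth $L^{2}$ into $\Lip(G,d)$.

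Concretely, I would first construct a Lipschitz approximate identity $(\phi_{n})_{n\in\mathbb{N}}$ on $G$ as follows. Pick nonzero Lipschitz bumps $\psi_{n}:[0,\diam G]\to[0,\infty)$ supported in $[0,1/n]$ and set $\widetilde{\phi}_{n}(x):=\psi_{n}(d(x,e))$. Bi-invariance of $d$ yields $d(x^{-1},e)=d(x,e)$, so $\widetilde{\phi}_{n}$ is automatically \emph{symmetric}, i.e.\ $\widetilde{\phi}_{n}(x^{-1})=\widetilde{\phi}_{n}(x)$. Normalise to $\phi_{n}:=\widetilde{\phi}_{n}/\|\widetilde{\phi}_{n}\|_{L^{1}}$, so that $\|\phi_{n}\|_{L^{1}}=1$, $\supp\phi_{n}\subset \cB(e,1/n)$, and each $\phi_{n}$ is Lipschitz and symmetric. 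Set $F_{n}:=U(\phi_{n})$ as in \eqref{eq: convolution-action}. Then $\|F_{n}\|\leq\|\phi_{n}\|_{L^{1}}=1$, and a standard argument for compact groups shows $F_{n}f\to f$ in $L^{2}$ for every $f\in L^{2}(G,\nu)$, verifying hypothesis (1) of Lemma \ref{lem:esa}.

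For hypothesis (2), after the change of variable $s\mapsto xt^{-1}$,
\begin{equation*}
F_{n}f(x)=\int_{G}\phi_{n}(xt^{-1})f(t)\dd\nu(t),
\end{equation*}
so bi-invariance of $d$ gives
\begin{equation*}
|F_{n}f(x)-F_{n}f(y)|\leq \Lip(\phi_{n})\,d(x,y)\,\|f\|_{L^{1}}\lesssim \Lip(\phi_{n})\,d(x,y)\,\|f\|_{L^{2}},
\end{equation*}
for every $f\in L^{2}(G,\nu)$. Hence $F_{n}$ maps $L^{2}(G,\nu)$, and in particular $\Dom \Delta_{0}^{*}$, into $\Lip(G,d)=\Dom \Delta_{0}$.

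Finally, for hypothesis (3) I claim $[\Delta_{0}^{*},F_{n}]=0$ on $\Dom\Delta_{0}^{*}$. Symmetry of $\phi_{n}$ gives $F_{n}^{*}=F_{n}$, and Corollary \ref{cor:groups} gives $\Delta F_{n}g=F_{n}\Delta g$ for all $g\in\Dom \Delta$. For $f\in\Dom\Delta_{0}^{*}$ and $g\in\Lip(G,d)$, using that $F_{n}g\in\Lip(G,d)\subset\Dom\Delta$ and that $\Delta$ extends $\Delta_{0}$,
\begin{align*}
\langle \Delta_{0}^{*}F_{n}f,g\rangle_{L^{2}}
&=\langle F_{n}f,\Delta_{0}g\rangle_{L^{2}}
=\langle f,F_{n}\Delta g\rangle_{L^{2}}
=\langle f,\Delta F_{n}g\rangle_{L^{2}}\\
&=\langle f,\Delta_{0}F_{n}g\rangle_{L^{2}}
=\langle \Delta_{0}^{*}f,F_{n}g\rangle_{L^{2}}
=\langle F_{n}\Delta_{0}^{*}f,g\rangle_{L^{2}}.
\end{align*}
Density of $\Lip(G,d)$ in $L^{2}(G,\nu)$ yields $\Delta_{0}^{*}F_{n}f=F_{n}\Delta_{0}^{*}f$, so the commutator vanishes identically, satisfying (3) trivially. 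Lemma \ref{lem:esa} then concludes that $\Delta_{0}$ is essentially self-adjoint. The main (but mild) obstacle is the smoothing step: ensuring $F_{n}$ sends $L^{2}$ into $\Lip(G,d)$, which is precisely what forces the use of bi-invariance and the Lipschitz regularity of $\phi_{n}$; once that is in place the remaining verifications are either standard or are immediate from the group-equivariance of $\Delta$ established in Lemma \ref{lem: group-commutator} and Corollary \ref{cor:groups}.
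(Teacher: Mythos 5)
Your proof is correct and takes essentially the same route as the paper: both apply Lemma \ref{lem:esa} to $F_{n}=U(\phi_{n})$ for a Lipschitz approximate identity, verify the smoothing property $F_{n}:L^{2}(G,\nu)\to\Lip(G,d)$ via invariance of the metric and unimodularity, and dispose of the commutator condition via Corollary \ref{cor:groups}. The only difference is that you build the Lipschitz approximate identity explicitly as normalised radial bumps $\psi_{n}(d(\cdot,e))$, whereas the paper obtains one by $L^{1}$-approximating a general approximate identity from \cite{DeitmarEchteroff} by Lipschitz functions; your explicit verification that $[\Delta_{0}^{*},F_{n}]=0$ on $\Dom\Delta_{0}^{*}$ is, if anything, slightly more careful than the paper's.
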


\begin{proof} We construct a sequence $\phi_{n}\in L^{1}(G,\nu)$ such that $F_{n}:=U(\phi_{n})$ satisfies the hypotheses of Lemma \ref{lem:esa}. Since $(G,d)$ is a metric space and a locally compact group, by \cite[Lemma 6.2.2 ]{DeitmarEchteroff} there exists a sequence $\psi_{n}\in L^{1}(G,\nu)$ such that $\|\psi_{n}\|_{L^{1}}=1$ and $\|U(\psi_{n})f-f\|_{L^{2}}\to 0$ for all $f\in L^{2}$. As $\Lip(G,d)\subset L^{1}(G,\nu)$ is a dense subspace we can chose $\phi_{n}\in\Lip(G,d)$ such that $\|\psi_{n}-\phi_{n}\|_{L^{1}}\leq n^{-1}$. Then
\[\|U(\phi_{n})\|\leq \|U(\psi_{n})\|+\|U(\psi_{n}-\phi_{n})\|\leq \|\psi_{n}\|_{L^{1}} + \|\psi_{n}-\phi_{n}\|_{L^{1}}\lesssim 1,\]
and 
\[\|U(\phi_{n})f-f\|\leq \|U(\psi_{n})f-f\|+\|U(\psi_{n}-\phi_{n})f\|\to 0.\]
Thus $F_{n}:=U(\phi_{n})$ still satisfies (1) of Lemma \ref{lem:esa}.
For (2) we observe that for $f\in\Lip(G,d)$ and $g\in L^{2}(G,\nu)$, right-invariance of the metric and unimodularity of $G$ give
\begin{align*}
|f*g(x)-f*g(y)|&=\left|\int_{G}(f(xs)-f(ys))g(s^{-1})\mathrm{d}s\right|\leq \Lip(f)\int_{G}d(xs,ys) g(s^{-1})\mathrm{d}s\\
&=\Lip(f)d(x,y)\int_{G} g(s^{-1})\mathrm{d}s=\Lip(f)d(x,y)\|g\|_{L^{1}},
\end{align*}
so that $f*g\in\Lip(G,d)$. In particular, $U(\phi_{n}):L^{2}(G,\nu)\to \Lip(G,d)$. Lastly, for (3), by left-invariance of the metric we can apply Lemma \ref{cor:groups} to find that $[\Delta^*,F_n]g=0$ for every $g\in \Dom \Delta^{*}$. 
Hence $\Delta_0$ is essentially self-adjoint on $\Lip(G,d)$.
\end{proof}
The product group $G\times G$ acts on $(G,d,\nu)$ by automorphisms via $(g_{1},g_{2})\cdot g:=g_{1}gg_{2}^{-1}$. The Lemma \ref{lem: group-commutator} together with Schur's Lemma imply that $\Delta_{\mathbb{C}}$ acts as a scalar on the irreducible summands in the Peter-Weyl decomposition of $L^{2}_{\mathbb{C}}(G,\nu)$. In particular, $\Delta_{\mathbb{C}}$ is essentially self-adjoint on the algebraic direct sum of the irreducible components in the Peter-Weyl decomposition of $L^{2}_{\mathbb C}(G,\nu)$. 

In the case of a compact Lie group $G$, $\Delta_{\mathbb{C}}$ is a bounded perturbation of the logarithm of the Casimir operator by \cite[Example 2.9]{GU}. For more general compact Ahlfors regular groups $\Delta_{\mathbb{C}}$ can thus be viewed as a Casimir-type operator that is diagonal on the Peter-Weyl decomposition.

The convolution \eqref{eq: convolution-action}, applied to $G\times G$ can be viewed as defining a left and a right Banach module structure of $L^{1}(G,\nu)$ on $L^{2}(G,\nu)$. Applying Corollary  \ref{cor:groups} then gives $$\Delta(f*g)=f*\Delta(g)=\Delta(f)*g,$$ for all $f\in\Lip(G,d)\subset L^{1}(G,\nu)$. In particular, $\Delta$ is an \emph{unbounded multiplier} of the Banach algebra $L^{1}(G,\nu)$.

For Ahlfors regular compact homogeneous spaces $(G/K,d,\mu)$, it is of interest to relate representation theoretic properties to the question of possible essential self-adjointness on $\Lip(G/K,d)$.

\subsection{Conformal boundary actions}\label{sec:CBA}
Let $\mathbb{B}^{n+1}\subset \mathbb{R}^{n+1}$ be the open unit ball and $\mathbb{S}^{n}$ its boundary, the $n$-sphere. Equipped with the Poincare metric, 
$$\mathrm{d}\rho=\frac{2\mathrm{d}x}{1-\|x\|^{2}},$$
where $\|x\|$ denotes the Euclidean norm on $\mathbb{R}^{n+1}$, the ball $\mathbb{B}^{n+1}$ is a model for hyperbolic $(n+1)$-space. The isometry group for the metric can be identified with $\mathrm{SO}^{+}(n+1,1)$ and the action of $\mathrm{SO}^{+}(n+1,1)$ extends to an action by diffeomorphisms on the boundary $\mathbb{S}^{n}$. 

We equip $\mathbb{S}^{n}$ with the structure of an Ahlfors regular metric-measure space of dimension $n$ by letting $\mu$ be the volume measure for the round metric on $\mathbb{S}^{n}$ and $d(x,y):=\|x-y\|$ the \emph{chordal distance} defined by the ambient Euclidean distance on $\mathbb{B}^{n+1}$. 

Each $\gamma\in \mathrm{SO}^{+}(n+1,1)$ defines a conformal diffeomorphism of $\mathbb{S}^{n}$. We denote by $\gamma':\mathbb{S}^{n}\to \mathrm{GL}_{n}(\mathbb{R})$ its derivative and by $|\gamma'(x)|$ the unique positive number for which $\frac{\gamma'(x)}{|\gamma'(x)|}$ is an orthogonal matrix. In particular $$|\gamma'|:\mathbb{S}^{n}\to \mathbb{R},\quad |\gamma'|:x\mapsto |\gamma'(x)|,$$ 
is a nonvanishing smooth map. We have the identities (see \cite[Equation 1.3.2]{Nicholls}),
\begin{equation}
\label{eq: derivative}
\|\gamma(x)-\gamma(y)\|=|\gamma'(x)|^{1/2}|\gamma'(y)|^{1/2}\|x-y\|,\quad |(\gamma\circ\delta)'(x)|=|\gamma'\circ\delta(x)||\delta'(x)|,
\end{equation}
and the measure $\mu$ satisfies (see \cite[Theorem 4.1.1]{Nicholls})
\begin{equation}
\label{eq: conformalfull}
\mathrm{d}\mu(\gamma(x))=|\gamma'(x)|^{n}\mathrm{d}\mu(x).
\end{equation}
Equations \eqref{eq: derivative} and \eqref{eq: conformalfull} imply that for any $\gamma\in\mathrm{SO}^{+}(n+1,1)$ and $F:X\times X\setminus D\to \mathbb{R}$ we have the equality
\begin{align}
\label{eq: change-of-variables}
\int_{\mathbb{S}^{n}}\int_{\mathbb{S}^{n}}\frac{F(x,y)}{d(x,y)^{n}}\mathrm{d}\mu(y)\mathrm{d}\mu(x)=\int_{\mathbb{S}^{n}}\int_{\mathbb{S}^{n}}\frac{F(\gamma x,\gamma y)}{d(x,y)^{n}}|\gamma'(x)|^{\frac{n}{2}}|\gamma'(y)|^{\frac{n}{2}}\mathrm{d}\mu(y)\mathrm{d}\mu(x).
\end{align}
On the Hilbert space $L^{2}(\mathbb{S}^{n},\mu)$ we consider the unitary representation
\begin{equation*}
U:\mathrm{SO}^{+}(n+1,1)\to \mathcal{U}(L^{2}(\mathbb{S}^{n},\mu)),\quad U(\gamma)f(x):=|(\gamma^{-1})'(x)|^{\frac{n}{2}}f(\gamma^{-1} x).
\end{equation*}
\begin{prop}\label{prop: conformalgroups}
Let $\Delta:\Dom\Delta\to L^{2}(\mathbb{S}^{n},\mu)$ be the logarithmic Dirichlet Laplacian for $(\mathbb{S}^{n},d,\mu)$ and $\gamma\in\mathrm{SO}^{+}(n+1,1)$. Then $U(\gamma)$ preserves $\Dom\Delta$ and the commutator $[\Delta,U(\gamma)]:\Dom \Delta\to L^2(\mathbb{S}^{n},\mu)$
extends to a bounded operator.
\end{prop}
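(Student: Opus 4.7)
The plan is to apply Proposition \ref{prop: domain-commutator-form}(2) with $T=U(\gamma)$. A direct change of variables via \eqref{eq: change-of-variables} shows that $U$ is a unitary representation, so $T^{*}=U(\gamma^{-1})$, and we only need to verify that $U(\gamma)$ and $U(\gamma^{-1})$ preserve $\W_{\mathbb{C}}$ and that $|\E_{\mathbb{C}}(U(\gamma)f,g)-\E_{\mathbb{C}}(f,U(\gamma^{-1})g)|\lesssim \|f\|_{L^{2}}\|g\|_{L^{2}}$ uniformly in $f,g\in\W_{\mathbb{C}}$.

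The computational core will be to evaluate $\E_{\mathbb{C}}(U(\gamma)f,U(\gamma)g)$ by substituting $x=\gamma u,\,y=\gamma v$. Using \eqref{eq: derivative}, \eqref{eq: conformalfull}, and the identity $|(\gamma^{-1})'(\gamma u)|=|\gamma'(u)|^{-1}$ (chain rule for $\gamma^{-1}\circ\gamma=\id$), the Jacobian $|\gamma'|^{n}$ from the measure, the factor $|\gamma'|^{n/2}$ from $d(\gamma u,\gamma v)^{n}$, and the factor $|\gamma'|^{-n/2}$ from $U(\gamma)f(\gamma u)$ rearrange so that, writing $\psi:=|\gamma'|^{n/2}$, expanding
\[(U(\gamma)f(\gamma u)-U(\gamma)f(\gamma v))(U(\gamma)g(\gamma u)-U(\gamma)g(\gamma v))\]
reproduces the integrand of $\E_{\mathbb{C}}(f,g)$ plus two ``off-diagonal'' terms proportional to $\psi(v)/\psi(u)-1$ and $\psi(u)/\psi(v)-1$. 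The symmetry swap $u\leftrightarrow v$ collapses these into a single term and yields the explicit identity
\[\E_{\mathbb{C}}(U(\gamma)f,U(\gamma)g)-\E_{\mathbb{C}}(f,g)=\langle \m_{\Phi_{\gamma}}f,g\rangle_{L^{2}},\quad \Phi_{\gamma}(u):=\psi(u)^{-1}\int_{\mathbb{S}^{n}}\frac{\psi(v)-\psi(u)}{d(u,v)^{n}}\dd\mu(v).\]

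Once this identity is in hand, the rest is essentially automatic. Since $\gamma$ is smooth, $\psi=|\gamma'|^{n/2}$ is a smooth positive function on the compact sphere, hence Lipschitz with $\psi^{-1}\in L^{\infty}$. Applying Lemma \ref{lem:commutator_bounded} with $h=\psi$, or more directly the pointwise bound $|\K_{\psi}(1)(u)|\leq \Lip(\psi)\int_{\mathbb{S}^{n}} d(u,v)^{-n+1}\dd\mu(v)\lesssim 1$ afforded by Lemma \ref{lem:Ahlfors_estimates}(1), gives $\Phi_{\gamma}\in L^{\infty}(\mathbb{S}^{n},\mu)$. Setting $f=g$ in the identity shows $U(\gamma)f\in\W_{\mathbb{C}}$ whenever $f\in\W_{\mathbb{C}}$ (and likewise for $\gamma^{-1}$), while substituting $g\mapsto U(\gamma)U(\gamma^{-1})g$ and using unitarity gives
\[|\E_{\mathbb{C}}(U(\gamma)f,g)-\E_{\mathbb{C}}(f,U(\gamma^{-1})g)|=|\langle \m_{\Phi_{\gamma}}f,U(\gamma^{-1})g\rangle_{L^{2}}|\leq \|\Phi_{\gamma}\|_{\infty}\|f\|_{L^{2}}\|g\|_{L^{2}},\]
which is exactly the hypothesis of Proposition \ref{prop: domain-commutator-form}(2).

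The main obstacle I anticipate is the bookkeeping of conformal factors in the change of variables, arranged so that $\E_{\mathbb{C}}(f,g)$ emerges cleanly as the leading term and the remainder takes the form of a multiplication operator. Beyond that, no genuinely new analytic input is needed: the smoothness of $|\gamma'|$ reduces every remainder estimate to the already-proven boundedness result of Lemma \ref{lem:commutator_bounded}, and unitarity of $U(\gamma^{-1})$ handles the final passage from the symmetric identity to the asymmetric estimate required by Proposition \ref{prop: domain-commutator-form}.
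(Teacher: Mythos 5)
Your proposal is correct and rests on the same pillars as the paper's proof: the conformal change of variables \eqref{eq: change-of-variables}, the Lipschitz estimate for $|\gamma'|^{n/2}$ combined with Lemma \ref{lem:Ahlfors_estimates}(1), and the criterion of Proposition \ref{prop: domain-commutator-form}. The organization differs in a pleasant way. The paper handles the two hypotheses of Proposition \ref{prop: domain-commutator-form} separately: it bounds $\mathcal{E}(U(\gamma^{-1})f,U(\gamma^{-1})f)$ from above by splitting the numerator into a ``conformal-factor difference'' term and a ``function difference'' term (an inequality, giving $U(\gamma^{-1}):\W\to\W$), and then computes $\mathcal{E}(U(\gamma^{-1})f,g)-\mathcal{E}(f,U(\gamma)g)$ directly as an explicit singular integral. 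You instead expand the substituted integrand exactly and package everything into the single identity $\mathcal{E}(U(\gamma)f,U(\gamma)g)=\mathcal{E}(f,g)+\langle \m_{\Phi_{\gamma}}f,g\rangle_{L^{2}}$, from which both domain preservation (set $f=g$; all three pieces are separately absolutely integrable, the off-diagonal ones by the Lipschitz bound) and the commutator estimate (write $g=U(\gamma)U(\gamma^{-1})g$ and use unitarity) follow at once. What your route buys is an exact formula identifying the form-commutator as $\m_{\Phi_{\gamma}}U(\gamma^{-1})$ with $\Phi_{\gamma}\in L^{\infty}$, which is structurally sharper than the paper's inequalities; the only care required, which you should make explicit, is that the splitting of the double integral into the main term and the two off-diagonal terms is justified because each piece is absolutely convergent (the off-diagonal kernels are $O(d(u,v)^{-(n-1)})$). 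Your argument is sound as written.
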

\begin{proof} For notational convenience we prove the statement for $U(\gamma^{-1})$. We have
\begin{align*}
2\mathcal{E}(U(\gamma^{-1})f,U(\gamma^{-1})f)&=\int_{\mathbb{S}^{n}}\int_{\mathbb{S}^{n}}\frac{||\gamma'(x)|^{\frac{n}{2}}f(\gamma x)-|\gamma'(y)|^{\frac{n}{2}}f(\gamma y)|^{2}}{d(x,y)^{n}}\mathrm{d}\mu(x)\mathrm{d}\mu(y)\\
&\leq \int_{\mathbb{S}^{n}}\int_{\mathbb{S}^{n}}\frac{||\gamma'(x)|^{\frac{n}{2}}-|\gamma'(y)|^{\frac{n}{2}}|^{2}||f(\gamma x)|^{2}}{d(x,y)^{n}}\mathrm{d}\mu(x)\mathrm{d}\mu(y)\\ &\quad\quad\quad\quad\quad\quad\quad\quad\quad\quad+\int_{\mathbb{S}^{n}}\int_{\mathbb{S}^{n}}\frac{|\gamma'(y)|^{n}|f(\gamma x)-f(\gamma y)|^{2}}{d(x,y)^{n}}\mathrm{d}\mu(x)\mathrm{d}\mu(y),
\end{align*}
and we denote the integrals by $\I_1$ and $\I_2$. We estimate $\I_1$ using Lemma \ref{lem:Ahlfors_estimates}:
\begin{align*}
\I_1&=\int_{\mathbb{S}^{n}}\int_{\mathbb{S}^{n}}\frac{||\gamma'(x)|^{\frac{n}{2}}-|\gamma'(y)|^{\frac{n}{2}}||^{2}}{d(x,y)^{n}}\mathrm{d}\mu(y)|f(\gamma x)|^{2}\mathrm{d}\mu(x)\\
&\leq \Lip(|\gamma'|^{\frac{n}{2}})^{2}\int_{\mathbb{S}^{n}}\int_{\mathbb{S}^{n}}\frac{1}{d(x,y)^{n-2}}\mathrm{d}\mu(y)|f(\gamma x)|^{2}\mathrm{d}\mu(x)\\
&\lesssim \Lip(|\gamma'|^{\frac{n}{2}})^{2}\int_{\mathbb{S}^{n}}|f(\gamma x)|^{2}\mathrm{d}\mu(x)\\
&\lesssim \Lip(|\gamma'|^{\frac{n}{2}})^{2}\||(\gamma^{-1})'|^n\|_{\infty}\|f\|_{2}^{2}.
\end{align*}
We rewrite $\I_2$ using \eqref{eq: change-of-variables} and the chain rule and estimate
\begin{align*}
\I_{2}&=\int_{\mathbb{S}^{n}}\int_{\mathbb{S}^{n}}\frac{|f(x)-f( y)|^{2}}{d(x,y)^{n}}\left(\frac{|(\gamma^{-1})'(x)|}{|(\gamma^{-1})'(y)|}\right)^{\frac{n}{2}}\mathrm{d}\mu(x)\mathrm{d}\mu(y)\leq \left\|\left(\frac{|(\gamma^{-1})'(x)|}{|(\gamma^{-1})'(y)|}\right)^{\frac{n}{2}}\right\|_{\infty}2\mathcal{E}(f,f).
\end{align*}
Hence, $U(\gamma^{-1}):\W\to\W$. A straightforward calculation using \eqref{eq: change-of-variables} shows that
\begin{align*}
|\mathcal{E}(U(\gamma^{-1})f,g)-\mathcal{E}(f,U(\gamma)g)|&=\left|\int_{\mathbb{S}^{n}}\int_{\mathbb{S}^{n}}\frac{|\gamma'(y)|^{n/2}-|\gamma'(x)|^{n/2}}{d(x,y)^{n}}f(\gamma x)g(x)\mathrm{d}\mu(y)\mathrm{d}\mu(x)\right|\\
&\lesssim\Lip(|\gamma'|^{\frac{n}{2}})\int_{\mathbb{S}^{n}}\int_{\mathbb{S}^{n}}\frac{1}{d(x,y)^{n-1}}\mathrm{d}\mu(y)|f(\gamma x)||g(x)|\mathrm{d}\mu(x)\\
&\lesssim \Lip(|\gamma'|^{\frac{n}{2}})\||(\gamma^{-1})'|^{\frac{n}{2}}\|_{\infty}\|f\|_{L^{2}}\|g\|_{L^{2}}.
\end{align*}
Thus by Proposition \ref{prop: domain-commutator-form}, $U(\gamma^{-1})$ preserves $\Dom\Delta$ and the commutator $[\Delta,U(\gamma^{-1})]$ extends to a bounded operator.
\end{proof}
We now consider a discrete subgroup $\Gamma\subset \mathrm{SO}^{+}(n+1,1)$, which we refer to as a \emph{Kleinian group}. The \emph{limit set} $\Lambda_{\Gamma}\subset \mathbb{S}^{n}$ is the set of accumulation points of an orbit $\Gamma x\subset\mathbb{B}^{n+1}$, that is $\Lambda_{\Gamma}=\overline{\Gamma x}\cap \mathbb{S}^{n}$. This definition is independent of the choice of $x\in\mathbb{B}^{n+1}$. If $\Lambda_{\Gamma}=\mathbb{S}^{n}$ then $\Gamma$ is said to be of the \emph{first kind} and otherwise it is of the \emph{second kind}. 

The \emph{convex core} $\mathcal{C}(\Gamma)\subset\mathbb{B}^{n+1}$ of such a group is the union of all hyperbolic geodesics both whose endpoints are elements of $\Lambda_{\Gamma}$. The group $\Gamma$ is \emph{convex cocompact} if $\mathcal{C}(\Gamma)/\Gamma$ is compact. If $\Gamma$ is of the first kind then $\mathcal{C}(\Gamma)=\mathbb{B}^{n+1}$ and it is convex cocompact if and only it is cocompact.

The \emph{Patterson-Sullivan measure} on $\Lambda_{\Gamma}$ is a measure on the limit set \cite{Nicholls, Su}, which we denote by $\mu$. If $\Gamma$ is convex cocompact, by \cite[Theorem 4.1.1]{Nicholls}, $\mu$ satisfies the transformation rule
\begin{equation}
\label{eq: conformalHausdorff}
\mathrm{d}\mu(\gamma(x))=|\gamma'(x)|^{\delta}\mathrm{d}\mu(x),
\end{equation}
where $\delta$ is the Hausdorff dimension of $\Lambda_{\Gamma}$ with respect to the chordal distance $d$ defined above. The group $\Gamma$ acts on $\Lambda_{\Gamma}$ and we have a unitary representation
\[U:\Gamma\to\mathcal{U}(L^{2}(\Lambda_{\Gamma},\mu)),\quad U(\gamma)f(x):=|(\gamma^{-1})'(x)|^{\frac{\delta}{2}}f(\gamma^{-1} x).\]
Moreover, by \cite[Theorem 4.6.2]{Nicholls}, $(\Lambda_{\Gamma}, d,\mu)$ is an Ahlfors regular metric-measure space. Combining \eqref{eq: conformalHausdorff} with \eqref{eq: derivative}, we obtain the analogue of \eqref{eq: change-of-variables} with $n$ replaced by $\delta$. The same proof as that of Proposition \ref{prop: conformalgroups} then shows the following.
\begin{prop}\label{prop:CBA2}
Let $\Gamma\subset\mathrm{SO}^{+}(n+1,1)$ be a convex cocompact Kleinian group of the second kind, $\Delta:\Dom\Delta\to L^{2}(\Lambda_{\Gamma},\mu)$ the logarithmic Dirichlet Laplacian for $(\Lambda_{\Gamma},d,\mu)$ and $\gamma\in\Gamma$. Then $U(\gamma)$ preserves $\Dom\Delta$ and the commutator $[\Delta,U(\gamma)]:\Dom \Delta\to L^2(\Lambda_{\Gamma},\mu)$
extends to a bounded operator.
\end{prop}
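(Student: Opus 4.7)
The plan is to carry over the proof of Proposition \ref{prop: conformalgroups} essentially verbatim, with the Hausdorff dimension $\delta$ playing the role of $n$ and the Patterson--Sullivan measure $\mu$ on $\Lambda_{\Gamma}$ replacing the round volume measure on $\mathbb{S}^{n}$. The first ingredient is a $\delta$-conformal change-of-variables formula. Combining the derivative identity \eqref{eq: derivative} (valid on all of $\mathbb{S}^{n}$, hence on $\Lambda_{\Gamma}$) with the transformation rule \eqref{eq: conformalHausdorff} yields, for every measurable $F$ on $\Lambda_{\Gamma}\times\Lambda_{\Gamma}\setminus D$ and every $\gamma\in\Gamma$,
\[
\int_{\Lambda_{\Gamma}}\int_{\Lambda_{\Gamma}}\frac{F(x,y)}{d(x,y)^{\delta}}\dd\mu(y)\dd\mu(x)=\int_{\Lambda_{\Gamma}}\int_{\Lambda_{\Gamma}}\frac{F(\gamma x,\gamma y)}{d(x,y)^{\delta}}|\gamma'(x)|^{\frac{\delta}{2}}|\gamma'(y)|^{\frac{\delta}{2}}\dd\mu(y)\dd\mu(x),
\]
which is the direct analogue of \eqref{eq: change-of-variables}. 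In addition, because $\gamma$ extends to a smooth diffeomorphism of the ambient sphere and $\Lambda_{\Gamma}\subset\mathbb{S}^{n}$ is compact, the maps $|\gamma'|^{\pm\delta/2}$ restrict to Lipschitz functions on $\Lambda_{\Gamma}$ with the chordal metric and are bounded above and below.

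Next, I would estimate $\mathcal{E}(U(\gamma^{-1})f,U(\gamma^{-1})f)$ by the same two-term splitting as in the sphere case:
\begin{align*}
\bigl||\gamma'(x)|^{\frac{\delta}{2}}f(\gamma x)-|\gamma'(y)|^{\frac{\delta}{2}}f(\gamma y)\bigr|^{2} &\leq 2\bigl||\gamma'(x)|^{\frac{\delta}{2}}-|\gamma'(y)|^{\frac{\delta}{2}}\bigr|^{2}|f(\gamma x)|^{2}\\
&\quad +2|\gamma'(y)|^{\delta}|f(\gamma x)-f(\gamma y)|^{2}.
\end{align*}
The contribution of the first summand is controlled by invoking Lipschitz continuity of $|\gamma'|^{\delta/2}$ and Lemma \ref{lem:Ahlfors_estimates}(1) with $s=2$ to bound the inner integral of $d(x,y)^{2-\delta}$ (the case $\delta\leq 2$ being trivial), followed by a change of variables giving a constant multiple of $\|f\|_{L^{2}}^{2}$. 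The second summand, by the formula displayed above, is bounded by a uniform multiple of $\mathcal{E}(f,f)$. Together these estimates show that $U(\gamma^{-1}):\W\to\W$.

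Finally, to conclude that $U(\gamma^{-1})$ preserves $\Dom\Delta$ and that $[\Delta,U(\gamma^{-1})]$ extends boundedly, I would invoke Proposition \ref{prop: domain-commutator-form}. A calculation using the change of variables gives
\[
\mathcal{E}(U(\gamma^{-1})f,g)-\mathcal{E}(f,U(\gamma)g)=\int_{\Lambda_{\Gamma}}\int_{\Lambda_{\Gamma}}\frac{|\gamma'(y)|^{\frac{\delta}{2}}-|\gamma'(x)|^{\frac{\delta}{2}}}{d(x,y)^{\delta}}f(\gamma x)g(x)\dd\mu(y)\dd\mu(x),
\]
and applying Lipschitz continuity of $|\gamma'|^{\delta/2}$ together with Lemma \ref{lem:Ahlfors_estimates}(1) with $s=1$ gives the required bound by $C\|f\|_{L^{2}}\|g\|_{L^{2}}$. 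The only real (and rather mild) technical point is to confirm that the Lipschitz and supremum constants of $|\gamma'|^{\pm\delta/2}$ on $\Lambda_{\Gamma}$ are finite, which is immediate from smoothness of $\gamma$ on $\mathbb{S}^{n}$ and compactness of $\Lambda_{\Gamma}$.
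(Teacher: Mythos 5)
Your proposal is correct and follows exactly the route the paper takes: the paper's own proof consists of the single observation that combining \eqref{eq: conformalHausdorff} with \eqref{eq: derivative} yields the analogue of \eqref{eq: change-of-variables} with $n$ replaced by $\delta$, after which the argument of Proposition \ref{prop: conformalgroups} applies verbatim. Your write-up simply makes explicit the steps (Lipschitz control of $|\gamma'|^{\delta/2}$ on the compact limit set, the two-term splitting, and the appeal to Proposition \ref{prop: domain-commutator-form}) that the paper leaves implicit.
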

\begin{remark}\label{rem:CBA}
We conclude by emphasizing that an analogue of Propositions \ref{prop: conformalgroups} and \ref{prop:CBA2} cannot be obtained for the fractional Dirichlet Laplacians $\Delta_{\alpha}$ described in the introduction. 
We consider the complex operators $\Delta_{\mathbb C}, \Delta_{\alpha,\mathbb C}$. The operator $\Delta_{\alpha,\mathbb C}$ on $(\mathbb S^n,d,\mu)$ has been studied by Samko \cite{Samko, Samko2} and can be diagonalised by the spherical harmonics and his proof shows that $\Delta_{\alpha}$ is finitely summable in the sense of Definition \ref{def: summable}. Using Proposition \ref{prop: domain-commutator-form} of the present paper, we thus obtain a finitely summable spectral triple $(C_{\mathbb C}^{\infty}(\mathbb S^n), L_{\mathbb C}^2(\mathbb S^n,\mu),\Delta_{\alpha,\mathbb C})$.

We denote the algebra generated by $C^{\infty}_{\mathbb{C}}(\mathbb{S}^{n})\subset \mathbb{B}(L^{2}_{\mathbb{C}}(\mathbb{S}^{n},\mu))$ and $U(\Gamma)$ by $C_{\mathbb C}^{\infty}(\mathbb S^n)\rtimes_{\text{alg}} \Gamma$. Its norm closure is the \emph{crossed product} $C(\mathbb{S}^{n})\rtimes \Gamma$ (see \cite{W}). Now, if Proposition \ref{prop: conformalgroups} were true also for $\Delta_{\alpha}$ and so for $\Delta_{\alpha,\mathbb C}$, we would get a finitely summable spectral triple $(C_{\mathbb C}^{\infty}(\mathbb S^n)\rtimes_{\text{alg}} \Gamma,L_{\mathbb C}^2(\mathbb S^n,\mu),\Delta_{\alpha,\mathbb C})$ for any discrete subgroup $\Gamma$ of $\mathrm{SO}^{+}(n+1,1)$. Then, following Connes \cite[Theorem 8]{Connestrace} one obtains a trace on the crossed product $C^*$-algebra $C(\mathbb S^n)\rtimes \Gamma$. However, if $\Gamma$ is non-elementary and of the first kind, then $\Lambda_{\Gamma}=\mathbb{S}^{n}$ and $C(\mathbb S^n)\rtimes \Gamma$ does not admit any trace, as it is a purely infinite $C^*$-algebra, see \cite[Proposition 3.1]{Del} and \cite[Lemma 3.8]{Lott}, leading to a contradiction. 

The weaker $\Li$-summability of the spectral triples $(\D_{\mathbb{C}}(\mathbb S^n,d)\rtimes_{\text{alg}} \Gamma, L^{2}_{\mathbb{C}}(\mathbb S^n,\mu), \Delta_{\mathbb{C}}),$ associated to any of the aforementioned crossed products $C(\mathbb S^n)\rtimes \Gamma$ circumvents Connes' tracial obstruction, and as such is an asset. We leave the study of spectral triples constructed from $\Delta$ to a subsequent paper.
\end{remark}

\begin{remark}
The results of the present subsection show that we obtain a representation of $\Gamma$ on $\Dom \Delta$ as bounded operators. In the recent work \cite{BS} the authors consider uniformly bounded representations of hyperbolic groups on \textit{fractional} Sobolev spaces. Although our context differs, it would be of interest to investigate the relationship of their work with ours.
\end{remark}

\begin{Acknowledgements}
The authors would like to express their gratitude to Kevin Boucher, Magnus Goffeng, Erik Koelink, Franz Luef and J\'{a}n $\check{\text{S}}$pakula for many stimulating discussions on the subject of this paper. We also thank the referees for their careful reading of the manuscript.
\end{Acknowledgements}

\end{document}